\newtheorem{theorem}{Theorem}
\newtheorem{corollary}[theorem]{Corollary}
\newtheorem{proposition}[theorem]{Proposition}
\newtheorem{lemma}[theorem]{Lemma}
\theoremstyle{definition}
\newtheorem{defn}[theorem]{Definition}
\newtheorem{conjecture}[theorem]{Conjecture}
\theoremstyle{remark}
\newtheorem{rem}[theorem]{Remark}
\newtheorem{examplex}[theorem]{Example}
\newenvironment{example}
  {\pushQED{\qed}\examplex}
  {\popQED\endexamplex}
\definecolor{LightGrey}{RGB}{220, 220, 220}
\newcommand{\mA}{\ensuremath{\mathcal{A}}}
\newcommand{\mD}{\ensuremath{\mathcal{D}}}
\newcommand{\CC}{\ensuremath{\mathcal{C}}}
\newcommand{\mF}{\ensuremath{\mathcal{F}}}
\newcommand{\mH}{\ensuremath{\mathcal{H}}}
\newcommand{\mR}{\ensuremath{\mathcal{R}}}
\newcommand{\mS}{\ensuremath{\mathcal{S}}}
\newcommand{\mT}{\ensuremath{\mathcal{T}}}
\newcommand{\mV}{\ensuremath{\mathcal{V}}}
\newcommand{\sing}{\ensuremath{\mathcal{V}}}
\newcommand{\E}{\ensuremath{\mathbb{E}}}
\newcommand{\C}{\mathbb{C}}
\newcommand{\Z}{\mathbb{Z}}
\newcommand{\R}{\mathbb{R}}
\newcommand{\N}{\mathbb{N}}
\newcommand{\Q}{\mathbb{Q}}
\newcommand{\bP}{\mathbb{P}}
\newcommand{\bA}{\ensuremath{\mathbf{A}}}
\newcommand{\ba}{\ensuremath{\mathbf{a}}}
\newcommand{\bi}{\ensuremath{\mathbf{i}}}
\newcommand{\bj}{\ensuremath{\mathbf{j}}}
\newcommand{\bk}{\ensuremath{\mathbf{k}}}
\newcommand{\bm}{\ensuremath{\mathbf{m}}}
\newcommand{\bn}{\ensuremath{\mathbf{n}}}
\newcommand{\br}{\ensuremath{\mathbf{r}}}
\newcommand{\rr}{\ensuremath{\mathbf{r}}}
\newcommand{\uu}{\ensuremath{\mathbf{u}}}
\newcommand{\vv}{\ensuremath{\mathbf{v}}}
\newcommand{\ww}{\ensuremath{\mathbf{w}}}
\newcommand{\bw}{\ensuremath{\mathbf{w}}}
\newcommand{\bx}{\ensuremath{\mathbf{x}}}
\newcommand{\xx}{\ensuremath{\mathbf{x}}}
\newcommand{\bX}{\ensuremath{\mathbf{X}}}
\newcommand{\bY}{\ensuremath{\mathbf{Y}}}
\newcommand{\by}{\ensuremath{\mathbf{y}}}
\newcommand{\bz}{\ensuremath{\mathbf{z}}}
\newcommand{\zz}{\ensuremath{\mathbf{z}}}
\newcommand{\one}{\ensuremath{\mathbf{1}}}
\newcommand{\zero}{\ensuremath{\mathbf{0}}}
\newcommand{\grad}{\ensuremath{\nabla}}
\newcommand{\bss}{\mathbf{s}}
\newcommand{\htbm}{\hat{\bm}}
\newcommand{\bs}{\mathbf{s}}
\newcommand{\bmu}{\ensuremath{\boldsymbol \mu}}
\newcommand{\bSigma}{\ensuremath{\boldsymbol \Sigma}}
\newcommand{\btheta}{\ensuremath{\mathbf{\theta}}}
\newcommand{\bone}{\ensuremath{\mathbf{1}}}
\title{Central Limit Theorems via Analytic Combinatorics in Several Variables}
\author{Stephen Melczer and Tiadora Ruza}
\begin{document}
\maketitle

\begin{abstract}
The field of analytic combinatorics is dedicated to the creation of effective techniques to study the large-scale behaviour of combinatorial objects. Although classical results in analytic combinatorics are mainly concerned with univariate generating functions, over the last two decades a theory of analytic combinatorics in several variables (ACSV) has been developed to study the asymptotic behaviour of multivariate sequences. In this work we survey ACSV from a probabilistic perspective, illustrating how its most advanced methods provide efficient algorithms to derive limit theorems, and comparing the results to past work deriving limit theorems. Using the results of ACSV, we provide a SageMath package that can automatically compute (and rigorously verify) limit theorems for a large class of combinatorial generating functions. To illustrate the techniques involved, we also establish an explicit local central limit theorems for a family of combinatorial classes whose generating functions are linear in the variables tracking each parameter. Applications covered by this result include the distribution of cycles in certain restricted permutations (proving a limit theorem stated as a conjecture in recent work of Chung et al.~\cite{ChungDiaconisGraham2021}), integer compositions, and $n$-colour compositions with varying restrictions and values tracked. Key to establishing these explicit results in an arbitrary dimension is an interesting symbolic determinant, which we compute by conjecturing and then proving an appropriate $LU$-factorization. It is our hope that this work provides readers a blueprint to apply the powerful tools of ACSV in their own work, making them more accessible to combinatorialists, probabilists, and those in adjacent fields.
\end{abstract}

Let $(X_n)$ be a sequence of random variables. A \emph{limit theorem} (or \emph{limit law}) for $X_n$ is an approximation of the cumulative distribution functions $\bP(X_n \leq k)$ as $n\rightarrow\infty$. A \emph{local limit theorem} is an approximation of the exact probabilities $\bP(X_n = k)$ as $n\rightarrow\infty$. A \emph{central limit theorem (CLT)} or \emph{local central limit theorem (LCLT)} compares these probabilities to a \emph{normal density function}
\[ \phi_{\mu,\sigma}(x) = \frac{1}{\sqrt{2\pi}} e^{-(x-\mu)^2/2\sigma}. \]
The \emph{classical CLT} states that if $A_1,A_2,\dots$ is a sequence of independent identically distributed random variables with an expected value $\mu$ and a finite variance $\sigma^2>0$ then the sequence of random variables
\[ X_n = A_1 + \cdots + A_n \]
converges in distribution, after rescaling, to the standard normal distribution, so that
\[ \lim_{n \rightarrow \infty} \bP\left[\frac{X_n - n\mu}{\sigma\sqrt{n}} \leq x\right] = \int_{-\infty}^x \phi_{0,1}(t)dt \]
for all real $x$. Note that we rescale the sequence $X_n$ so that its limit is a fixed distribution instead of one that varies with $n$.

The classical CLT has a long history (see Section~\ref{sec:history} below for a brief recap), and has been generalized to weaken its assumptions, give explicit rates of convergence, and work in more abstract settings. In $d$-dimensions, a \emph{multivariate (L)CLT} compares probabilities to \emph{multivariate normal density functions}
\[ \phi_{\bmu,\bSigma}(\xx) = \frac{1}{(2\pi)^{d/2}} \exp\left(-\frac{1}{2}(\xx-\bmu)^T\bSigma^{-1}(\xx-\bmu)\right). \]
The \emph{classical multivariate CLT} states that if $(\bA_n)$ is a sequence of $d$-dimensional independent identically distributed random variables with an expected value vector $\bmu$ and a positive definite covariance matrix $\bSigma$ then the sequence of $d$-dimensional random variables
\[ \bX_n = \frac{\bA_1+\cdots+\bA_n-n\bmu}{\sqrt{n}} \]
converges in distribution to the multivariate normal distribution with density function $\phi_{0,\bSigma}(\xx)$. 

In combinatorial contexts, one often has a \emph{combinatorial class} $(\mathcal{C},|\cdot|)$ defined by a set of objects $\mathcal{C}$ and a \emph{size function} $|\cdot|:\mathcal{C}\rightarrow\N$ such that there are only a finite number of objects of any given size. A \emph{$d$-dimensional parameter} on such a class is any map $\chi:\mathcal{C}\rightarrow\N^d$, and the \emph{multivariate generating function} with respect to the triple $(\mathcal{C},|\cdot|,\chi)$ is the $(d+1)$-dimensional power series\footnote{Throughout this article we use the multi-index notation $\bz^\bi=z_1^{i_1}z_2^{i_2}\cdots z_d^{i_d}$ for any $d$-dimensional vectors $\bz$ and $\bi$.}
\[ C(\bz,t) = \sum_{n \geq 0}\left(\sum_{\bi \in \N^d} c_{\bi,n}\bz^\bi \right)t^n \]
where $c_{\bi,n}$ denotes the number of objects in $\mathcal{C}$ with size $n$ and parameter value $\bi$. Well-established combinatorial theories exist to derive algebraic, differential, or functional equations satisfied by the generating functions of many types of combinatorial classes (see, for instance,~\cite{FlajoletSedgewick2009,GouldenJackson1983,Stanley1999,Wilf2006}). Thus, one often has a multivariate generating function $C(\bz,t)$ encoded in some way, and wants to determine the limiting behaviour of the $d$-dimensional array $(c_{\bi,n})_{\bi\in\N^d}$ as a function of $\bi$ when $n\rightarrow\infty$. 

Strikingly, the theory of \emph{analytic combinatorics} shows how central limit theorems for many combinatorial parameters can be derived directly from a study of the singular behaviour of $C(\bz,t)$. Flajolet and Sedgewick~\cite[Chapter IX]{FlajoletSedgewick2009} gives a detailed introduction, and Hwang~\cite{Hwang2024} lists more than 25 limit laws proven by the late Philippe Flajolet using this framework, including patterns in words, cost analyses of various sorting algorithms, parameters in different kinds of trees, polynomials with restricted coefficients, and ball-urn models. 

The classical theory of analytic combinatorics is chiefly concerned with the derivation of asymptotics of univariate generating functions. In typical cases, one applies \emph{transfer theorems} to deduce the asymptotic behaviour of a generating function's coefficients from series expansions of the generating function near its singularities. For many classes of generating functions, it is essentially algorithmic (on examples occurring in practice) to compute the singularities determining asymptotics, apply transfer theorems, and find asymptotic behaviour. General multivariate results in analytic combinatorics can be traced back to work of Bender, Richmond, and collaborators in the 1980s (see Section~\ref{sec:history} below for more details). Although such results apply to many applications in combinatorics and the analysis of algorithms, they capture only one type of multivariate singularity and require one to verify certain properties of multivariate generating functions (such as grouping singularities by moduli), which can be computationally expensive.

These issues led to the development of \emph{analytic combinatorics in several variables (ACSV)}~\cite{Melczer2021,PemantleWilsonMelczer2024} starting in the early 2000s. Following the framework of univariate analytic combinatorics, the goal is to derive effective methods to take an encoding of a multivariate generating function and return asymptotics or limit theorems of its series coefficients. Most results in ACSV focus on sequences with multivariate rational generating functions: such sequences appear naturally in many applications, and many functions with more complicated singular behaviour can be encoded by rational functions in a higher number of variables (for instance, every algebraic function in $d$ variables can be encoded as an explicit subseries of a rational function in $2d$ variables~\cite{DenefLipshitz1987}). The crux of this paper is to illustrate both the power of ACSV for proving limit theorems and the (in many cases completely) automatic nature of the required computations. 

\subsubsection*{Paper Structure}

In addition to summarizing the existing literature on combinatorial limit theorems, and clarifying how the methods of ACSV build on and modify these results, we hope to illustrate how explicit and algorithmic modern methods for verifying central limit theorems are. To this end, we provide easy to use theorems\footnote{The authors of this paper were introduced to Conjecture~\ref{conj:LCLT} by an author of Chung et al.~\cite{ChungDiaconisGraham2021}, who lamented on the opacity of some past references using ACSV to establish limit theorems.}, employing the ACSV template which can be applied by researchers in their own works. In addition to illustrating these methods on a collection of applications, including resolving a conjecture stated in Chung et al.~\cite{ChungDiaconisGraham2021} on the behaviour of cycles in a class of restricted permutations\footnote{We also note that this work began concurrently with the publication of the textbook~\cite{Melczer2021} of the first author (which includes details on how to computationally prove the specific limit theorem in~\eqref{eq:LCLT} for fixed -- not arbitrary -- dimension).}, we provide a SageMath package that can rigorously compute and prove central limit theorems for a large variety of combinatorial classes tracking parameters with multivariate rational generating functions, available at

\begin{center}
\url{https://github.com/ACSVMath/Limit-Theorems-For-Combinatorial-Parameters} 
\end{center}

Section~\ref{sec:history} gives a brief history of central limit theorems, paying particular attention to the use of analytic methods. After surveying results in a probabilistic context, we describe the development of CLTs in combinatorics. Section~\ref{sec:ACSVandCLT} then gives an overview of the methods of ACSV, illustrating how multivariate LCLTs follow from the general theory. Section~\ref{sec:limitExs} illustrates how these results lead to a general limit theorem with explicit constants holding for a number of combinatorial objects, and Section~\ref{sec:GenForm} proves the explicit constants appearing in this limit theorem using a guess-and-check method to compute a symbolic determinant through $LU$-factorization. 

\section{Analytic Methods for Central Limit Theorems}
\label{sec:history}

The application of analytic methods to prove limit theorems in probability and combinatorics has a long and wide-ranging history. It is impossible to give a full account of such a vast topic in this space, so our presentation is a broad overview tailored to the context of our work. 

\subsection{Probabilistic CLTs}
The proto-history of the CLT goes back at least as far\footnote{The history of the CLT is covered in great detail by Fischer~\cite{Fischer2011}, from which we have adapted some of our historical details.} as a 1733 offprint \emph{Approximatio ad summam terminorum binomii $(a+b)^n$ in seriem expansi} of Abraham de Movire (printed in English in the 1738 edition of his seminal Doctrine of Chances~\cite{Moivre1738}). Motivated by the computation of explicit bounds for the Law of Large Numbers, de Moivre used Stirling's approximation for $n!$ (which de Moivre independently approximated around the same time as Stirling) to deduce\footnote{In historical formulas we have updated some notation; for instance, de Moivre referred to the exponential function only by its series expansion, while we use its symbolic form to align with modern presentations of the CLT.}
\[ \bP\left[\left| X_n - \frac{n}{2} \right| \leq t \right] \approx \frac{4}{\sqrt{2\pi n}} \int_0^t e^{-2x^2/n}dx \]
for large $n$, where $X_n = A_1 + \cdots + A_n$ and each $A_k$ is a random variable taking the value 0 with probability $1/2$ and the value 1 with probability $1/2$.

Perhaps the first systematic uses of analytic methods to derive CLT-like results are due to Laplace. Laplace's approach to the CLT built off of his ground-breaking work~\cite{Laplace1774} on the approximation of parameterized integrals in the 1770s. Still influential to this day, \emph{Laplace's method} (in its classical form) is used to approximate integrals of the form
\[ \int_a^b A(x)e^{-n\phi(x)}dx \]
where $A$ and $\phi$ are analytic functions with $\phi$ minimized over $[a,b]$ at a unique point $c \in (a,b)$ such that $\phi$ has a Taylor expansion at $x=c$ that begins with a quadratic term. In an 1810 memoir, Laplace~\cite{Laplace1810} introduced the concept of \emph{characteristic functions} by making a change of variable $t=e^{ix}$ in integral representations for the sum of certain independent and identically distributed random variables. He then (formally) used his method to asymptotically approximate the probability that this sum lies between two factors at $\sqrt{n}$-scale by integrating a normal density. Although not working with a modern standard of rigor, Laplace's techniques continue to be influential to this day.

Many famous mathematicians in the nineteenth century worked on topics related to central limit theorems, including Gauss~\cite{Gauss1823} (who derived the now sometimes-eponymous \emph{Gaussian function} in the context of probabilistic error analysis), Poisson~\cite{Poisson1829} (who stated a CLT for a normalized sum of random variables, and gave explicit conditions on characteristic functions for a CLT to hold), Dirichlet (who gave proofs with more rigor, including correct truncation bounds to prove errors arising in Laplace's method go to zero), and Cauchy~\cite{Cauchy1853} (who gave an updated proof of the CLT using characteristic functions). As pointed out by Fischer~\cite{Fischer2011}, it is also instructive from a historical perspective to reflect that Cauchy and Dirichlet were working at the time period when mathematics and probability were starting to move away from a discipline chiefly concerned by modelling observations of the physical world to a more abstract logic-based subject. 
A Russian school, involving mathematicians such as Hausdorff, Chebyshev, and Markov, also developed CLT-like results in the late nineteenth and early twentieth century using techniques such as the \emph{method of moments} and \emph{moment generating functions}. The first ``modern'' treatment of the CLT (as a general mathematical result not dedicated to specific applications or to illustrate analytic methods) is often considered to be work of Lyapunov~\cite{Lyapunov1900,Lyapunov1901} around the turn of the twentieth century. The term \emph{central limit theorem} was likely coined by Pólya~\cite{Polya1920} (who studied various aspects of CLTs, and when sequences of distribution functions converge) in 1920. Lévy~\cite{Levy1935} and Feller~\cite{Feller1935} gave necessary and sufficient conditions for the CLT to hold in 1935.

Modern probability theory texts typically prove the CLT using the \emph{continuity theorem} developed by Lévy~\cite{Levy1922,Levy1925} in the 1920s, which states that if the characteristic functions 
\[ \varphi_n(t) = \E\left[e^{itX_n}\right] \]
of a sequence $(X_n)$ of random variables converge pointwise to a continuous function $\varphi(t)$ then $\varphi$ is the characteristic function of some random variable $X$ and the sequence $(X_n)$ converges in distribution to $X$ (see~\cite[Theorem 26.3]{Billingsley1995} or~\cite[Theorem 3.3.6]{Durrett2010} for proofs). If $A_n$ is a sequence of independent random variables with a common characteristic function $\varphi(t)$ then the characteristic function of $X_n = A_1 + \cdots + A_n$ is $\varphi(t)^n$. Assuming that the $A_j$ have a mean 0 and a finite variance $\sigma>0$, it is possible to get an expansion
\[ \varphi(t) = 1 + (it)\E[A_1] - \frac{t^2}{2}\E[A_1^2] + o(t^2) = 1 - \frac{\sigma^2}{2}t^2 + o(t^2) \]
as $t\rightarrow0$, so the characteristic function of $X^*_n = \frac{X_n}{\sigma\sqrt{n}}$ satisfies
\[ \varphi\left(\frac{t}{\sigma\sqrt{n}}\right)^n = \left(1-\frac{t^2}{2n} + o\left(\frac{t^2}{n}\right)\right)^n \rightarrow e^{-t^2/2} \]
for each $t$ as $n\rightarrow\infty$. The CLT in the mean zero case then follows from the continuity theorem, and the result for a general mean follows from a shift of the $X_n$. 

A complete proof of the general \emph{Lindeberg-Feller CLT} can be found in~\cite[Theorem 27.2]{Billingsley1995} and~\cite[Theorem 3.4.5]{Durrett2010}. Under slightly stronger conditions (such as finiteness of the third moments of the $A_j$) the \emph{Berry–Esseen theorem}~\cite{Berry1941,Esseen1942} describes the rate of convergence between the cumulative distribution functions of the $X_n$ and the cumulative distribution function of the normal distribution (see~\cite[Theorem 3.4.9]{Durrett2010} for a modern presentation). Multivariate limit theorems, including the multivariate CLT, can be established from univariate results through the classical \emph{Cramér-Wold theorem}~\cite{CramerWold1936}: a sequence $(\bX_n)$ of $d$-dimensional random variables converges in distribution to a random variable $\bY$ if and only if the sequence of univariate random variables $\btheta \cdot \bX_n$ converges to $\btheta \cdot \bY$ for all $\btheta \in \mathbb{R}^d$ (see~\cite[Theorem 29.4]{Billingsley1995} for a proof).

\subsection{Combinatorial CLTs}
\label{sec:combCLT}

As described above, in combinatorial contexts one has a multivariate sequence defined implicitly by a multivariate generating function
\begin{equation} 
C(\bz,t) = \sum_{n \geq 0}\left(\sum_{\bi \in \N^d} c_{\bi,n}\bz^\bi \right)t^n 
\label{eq:Cgenfunc}
\end{equation}
tracking some parameter $\chi$, and wishes to extract a limit theorem for the random variables $(\bX_n)$ taking the values $\bi\in\N^d$ with probabilities
\begin{equation} p_n(\bi) = \bP\left[\chi(\sigma)=\bi \text{ when $\sigma$ has size $n$} \right] = \frac{c_{\bi,n}}{\sum_{\bi \in \N^d}c_{\bi,n}}. \label{eq:pn} \end{equation}

In the 1-dimensional case, a direct computation verifies
\[ \E[X_n] =  \frac{\sum_{i \geq 0} i \, c_{i,n}}{\sum_{i \geq 0} c_{i,n}} = \frac{[t^n]C_z(1,t)}{[t^n]C(1,t)}, \]
where $C_z$ denotes the partial derivative of $C$ with respect to $z$ and the operator $[t^n]$ extracts the coefficient of $t^n$ from a power series expansion. Similarly, the $k$th moment of $X_n$ can be expressed in terms of the first $k$ derivatives (with respect to $z$) of $C(z,t)$, and limit theorems for $X_n$ can be established by extracting coefficients from these expressions. The theory of analytic combinatorics shows how, instead of explicitly computing coefficients, their behaviour can be approximated --- to a degree often sufficient to establish limit theorems --- implicitly by studying the analytic behaviour of $C(z,t)$ in $t$ when $z \approx 1$ (for CLTs) or when $|z| \approx 1$ (for LCLTs). 

A similar approach can be used for higher-dimensional parameters, except that the translation of the analytic behaviour of $C$ to the limit behaviour of $c_{\bi,n}$ is more delicate. Classical treatments typically proceed by requiring that the \emph{coefficient slices} 
\[ \phi_n(\zz) = [t^n]C(\zz,t) = \sum_{\bi \in \N^d} c_{\bi,n}\zz^{\bi}\]
encoding the objects of size $n$ are approximated by \emph{quasi-powers}. We briefly describe some of these results, and illustrate the methods involved in their proof, to compare with the limit theorems derived using the framework of ACSV in Section~\ref{sec:ACSVandCLT}.

\begin{theorem}[{Bender and Richmond~\cite[Theorem 1]{BenderRichmond1983}}]
\label{thm:BRthm1}
Suppose that $\phi_n(\zz) \sim f(n)g(\zz)\lambda(\zz)^n$ uniformly in a neighbourhood of $\zz=\one$ where $g(\zz)$ is uniformly continuous and $\lambda(\zz)$ has a quadratic Taylor series expansion with error term $O((\sum|x_k-1|)^3)$. If the Hessian matrix $\mH$ of $\log \lambda(e^{\bss})$ at $\bss=\zero$ is non-singular then 
\[ \sup_{\ba_n} \left|\sum_{\bi \leq \ba_n} p_n(\bi) - \frac{1}{(2\pi n)^{d/2}\sqrt{\det(\mH)}} \int_{\xx \leq \ba_n} e^{-\frac{1}{2n}(\xx-n\bm)\mH^{-1}(\xx-n\bm)^T} \, d\xx \right| = o(1), \]
where $p_n(\bi)$ is the scaled coefficient defined in~\eqref{eq:pn} above, $\bm$ is the gradient of $\log \lambda(e^{\bss})$ at $\bss=\zero$, and the inequalities are taken coordinate-wise.
\end{theorem}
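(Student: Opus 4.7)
The natural approach is via characteristic functions, which form the bridge between the analytic quasi-power asymptotic of $\phi_n(\zz)$ near $\zz=\one$ and a distributional statement for $\bX_n$. First I would observe that the characteristic function of $\bX_n$ is
\[ \varphi_n(\btt) = \E[e^{i\btt\cdot\bX_n}] = \frac{\phi_n(e^{i\btt})}{\phi_n(\one)}, \]
with $e^{i\btt}$ interpreted coordinate-wise. The quasi-power hypothesis, applied uniformly on a fixed complex neighbourhood of $\zz = \one$, allows the $f(n)$ factor to cancel and yields
\[ \varphi_n(\btt) = (1+o(1))\,\frac{g(e^{i\btt})}{g(\one)}\left(\frac{\lambda(e^{i\btt})}{\lambda(\one)}\right)^n \]
for all $\btt$ in a fixed neighbourhood of $\zero$, with the $o(1)$ uniform in $\btt$.

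Next, I would study the standardized random vectors $\bY_n = (\bX_n - n\bm)/\sqrt{n}$, whose characteristic function is $\psi_n(\btt) = e^{-i\sqrt n\,\btt\cdot\bm}\,\varphi_n(\btt/\sqrt n)$. Substituting $\bss = i\btt/\sqrt n$ into the assumed quadratic Taylor expansion
\[ \log\lambda(e^\bss) = \log\lambda(\one) + \bm\cdot\bss + \tfrac{1}{2}\bss^T\mH\bss + O\!\left((|s_1|+\cdots+|s_d|)^3\right) \]
and multiplying by $n$ gives
\[ n\log\frac{\lambda(e^{i\btt/\sqrt n})}{\lambda(\one)} = i\sqrt n\,\btt\cdot\bm - \tfrac{1}{2}\btt^T\mH\btt + O\!\left(|\btt|^3/\sqrt n\right). \]
Combined with continuity of $g$ at $\one$ (which forces $g(e^{i\btt/\sqrt n})/g(\one)\to 1$), this yields the pointwise limit $\psi_n(\btt)\to \exp(-\tfrac{1}{2}\btt^T\mH\btt)$ for each fixed $\btt\in\R^d$. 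Non-singularity of $\mH$ (together with positive-definiteness, which in combinatorial applications follows from the interpretation of $\log\lambda(e^\bss)$ as a cumulant-like generating function) identifies the limit as the characteristic function of $\mathcal{N}(\zero,\mH)$.

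The multivariate Lévy continuity theorem then promotes pointwise convergence of the characteristic functions to convergence in distribution $\bY_n\Rightarrow\mathcal{N}(\zero,\mH)$, and since the limit CDF is continuous on $\R^d$ with standard tail behaviour at infinity, the multivariate version of Pólya's theorem upgrades this to uniform convergence of distribution functions $\sup_\ba|F_{\bY_n}(\ba) - \Phi_{\zero,\mH}(\ba)|\to 0$. Unrolling the substitution $\ba = (\ba_n - n\bm)/\sqrt{n}$, and rescaling the Gaussian density so that its covariance matrix becomes $n\mH$ (absorbing the Jacobian $n^{d/2}$), turns this into exactly the statement of the theorem.

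The principal obstacle I anticipate is the careful propagation of uniformity in the analytic hypotheses. One must check that the quasi-power asymptotic holds on a full complex neighbourhood containing the arcs $\{e^{i\btt/\sqrt n}:|\btt|\leq R\}$ for arbitrarily large $R$ once $n$ is sufficiently large, that the cubic error $O((\sum|z_k-1|)^3)$ translates (via $|e^{it_k}-1|\leq|t_k|$) into an $O(|\btt|^3/n^{3/2})$ term that genuinely vanishes after multiplication by $n$, and that the prefactor $g(e^{i\btt/\sqrt n})/g(\one)$ converges uniformly on compact $\btt$-sets. These are routine but technical analytic points; once dispatched, the remaining probabilistic ingredients (the continuity theorem and the Pólya upgrade) are standard.
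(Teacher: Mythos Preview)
Your proposal is correct and follows essentially the same line as the paper's proof sketch: compute the characteristic function of $\bX_n$ as $\phi_n(e^{i\btt})/\phi_n(\one)$, standardize to $(\bX_n - n\bm)/\sqrt n$, use the quasi-power hypothesis together with the quadratic Taylor expansion of $\log\lambda(e^{\bss})$ to show the standardized characteristic function converges pointwise to $\exp(-\tfrac12\btt^T\mH\btt)$, and conclude via a continuity theorem.

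The one genuine (though minor) difference is in the final step. The paper establishes univariate convergence of $\bss\cdot\bX_n$ for each fixed $\bss$ via the one-dimensional continuity theorem and then invokes the Cram\'er--Wold device to lift this to multivariate convergence in distribution; you instead apply the multivariate L\'evy continuity theorem directly and follow with a multivariate P\'olya-type argument to obtain the uniform statement. These are equivalent routes---Cram\'er--Wold is essentially how one proves the multivariate continuity theorem---so neither buys anything the other does not. Your explicit mention of the P\'olya upgrade to uniform convergence of distribution functions is a point the paper's sketch leaves implicit but which is indeed needed for the $\sup_{\ba_n}$ in the conclusion.
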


Theorem~\ref{thm:BRthm1} and its corollaries below were first established in the 1-dimensional case by Bender~\cite{Bender1973}. Generalizations of Theorem~\ref{thm:BRthm1} in the 1-dimensional case were given by Hwang~\cite{Hwang1996,Hwang1998,Hwang1998a} and Heuberger and Kropf~\cite{HeubergerKropf2018} (see also the treatment in Flajolet and Sedgewick~\cite[Chapter~IX]{FlajoletSedgewick2009}).

\begin{proof}[Proof Sketch]
The random variable $\bX_k$ with probability distribution function $p_n$ has characteristic function 
\[ \E\left[e^{i(\bss \cdot \bX_k)}\right] = \sum_{\bk \in \N^d} e^{i(\bss \cdot \bk)} \cdot \frac{c_{\bk,n}}{\phi(\one)} = \frac{\phi_n(e^{i\bs})}{\phi(\one)}. \]
Shifting $\bX_k$ by its mean $n\bm$ and scaling by $1/\sqrt{n}$ gives a new random variable with a characteristic function
\[ f_n(\bss) = e^{-i\sqrt{n}(\bm \cdot \bss)} \cdot \frac{\phi_n(e^{i\bss/\sqrt{n}})}{\phi_n(\one)}, \]
and the expansion
\[ \log \lambda(e^{\bss}) = \log \lambda(\one) + (\bss \cdot \bm) + \frac{\bss \mH \bss^T}{2} + \cdots \]
combines with our assumptions on $\phi_n$ to give $f_n(\bss) \sim \exp[-\frac{\bss\mH\bss^T}{2}]$ for all fixed $\bss$. The 1-dimensional random variable $\bss \cdot \bX_n$ thus satisfies a CLT for any $\bss$ by the continuity theorem, meaning $\bX_n$ satisfies a multivariate CLT by the Cramér-Wold theorem.
\end{proof}

The key insight for combinatorial CLTs is that it is often possible to prove that $\phi_n$ satisfies the assumptions of Theorem~\ref{thm:BRthm1} directly from the analytic behaviour of $C(\zz,t)$ near some of its singularities. 

\begin{corollary}[{Bender and Richmond~\cite[Corollary 1]{BenderRichmond1983}}]
\label{cor:BRcor1}
Let $C(e^{\bss},t)$ be the generating function described in~\eqref{eq:Cgenfunc}. Suppose that there exists $\epsilon,\delta>0,$ a number $q \in \Q\setminus\{-1,-2\dots\}$, and functions $A(\bss), B(\bss,t),$ and $r(\bss)$ such that 
\begin{equation} 
C(e^{\bss},t) = A(\bss)\left(1-\frac{t}{r(\bss)}\right)^{-q-1} + B(\bss,t)\left(1-\frac{t}{r(\bss)}\right)^{-q} 
\label{eq:BRcor1}
\end{equation}
where
\begin{itemize}
  \item $A(\bss)$ is continuous and non-zero,
  \item $r(\bss)$ is positive and has continuous third-order partial derivatives, and
  \item $B(\bss,t)$ is analytic and bounded
\end{itemize} 
for $\|\bss\|<\epsilon$ and $|t|<|r(\zero)|+\delta$. If the Hessian matrix of $\lambda(\bss)=1/r(\bss)$ is nonsingular at $\bss=\zero$ then the CLT in Theorem~\ref{thm:BRthm1} holds.
\end{corollary}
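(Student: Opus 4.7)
My plan is to apply Theorem~\ref{thm:BRthm1} to the coefficient slices $\phi_n(e^{\bss}) = [t^n]C(e^{\bss},t)$ by extracting their asymptotics directly from the posited decomposition of $C$ and then verifying the hypotheses of the theorem with $\lambda = 1/r$.

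The dominant term can be handled with the binomial series:
\[
[t^n]\,A(\bss)\left(1-\tfrac{t}{r(\bss)}\right)^{-q-1} = A(\bss)\binom{n+q}{n}r(\bss)^{-n} \;\sim\; \frac{n^q}{\Gamma(q+1)}\,A(\bss)\,r(\bss)^{-n},
\]
with the estimate uniform for $\|\bss\|<\epsilon$ because $A$ is continuous and $r>0$ is bounded away from zero there. For the secondary term, I would split $B(\bss,t) = B(\bss,r(\bss)) + (t-r(\bss))\widetilde{B}(\bss,t)$ with $\widetilde{B}$ analytic on the prescribed bi-disk: the constant piece contributes $B(\bss,r(\bss))\binom{n+q-1}{n}r(\bss)^{-n} = O(n^{q-1})r(\bss)^{-n}$, while the $(t-r(\bss))$-factor reduces the singular exponent from $-q$ to $1-q$, so that piece also contributes $O(n^{q-1})r(\bss)^{-n}$ (either by the same binomial estimate or by a Cauchy-integral argument on a contour lying inside the disk $|t|<|r(\zero)|+\delta$ and outside the moving singularity $|t|=r(\bss)$). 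Summing and using $\lambda(\bss)=1/r(\bss)$ gives the required quasi-power estimate
\[
\phi_n(e^{\bss}) \;\sim\; \frac{n^q}{\Gamma(q+1)}\,A(\bss)\,\lambda(\bss)^n
\]
uniformly in $\|\bss\|<\epsilon$, identifying $f(n)=n^q/\Gamma(q+1)$ and $g(e^{\bss})=A(\bss)$.

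To match the remaining hypotheses of Theorem~\ref{thm:BRthm1}, I would note that the $C^3$ regularity of $r$ combined with $r(\zero)>0$ guarantees that $\lambda=1/r$ has a Taylor expansion at $\bss=\zero$ consisting of a quadratic principal part plus a cubic error; since $|e^{s_k}-1|\sim|s_k|$ near the origin, this matches the convention $O((\sum|x_k-1|)^3)$. Non-singularity of the Hessian of $\lambda$ at $\bss=\zero$ transfers to the Hessian of $\log\lambda(e^{\bss})$ used in Theorem~\ref{thm:BRthm1} via the identity $\partial^2\log\lambda = \partial^2\lambda/\lambda - (\partial\lambda)(\partial\lambda)^T/\lambda^2$, which differs from $\partial^2\lambda$ by a rank-one correction and the overall positive factor $1/\lambda$. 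Plugging everything into Theorem~\ref{thm:BRthm1} then produces the stated CLT.

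The main obstacle will be controlling the error terms \emph{uniformly} in the parameter $\bss$, despite the fact that the singularity $t=r(\bss)$ of the integrand drifts as $\bss$ varies. The assumption that $B$ is analytic and bounded on the \textbf{fixed} bi-domain $\{\|\bss\|<\epsilon,\ |t|<|r(\zero)|+\delta\}$ — rather than on an $\bss$-dependent disk — is precisely what is needed to make the contour-deformation estimates yield bounds that are independent of $\bss$, and verifying this uniformity carefully is the key technical step.
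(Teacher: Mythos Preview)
Your overall strategy---extract a uniform quasi-power estimate $\phi_n(e^{\bss})\sim f(n)A(\bss)r(\bss)^{-n}$ from the decomposition and then invoke Theorem~\ref{thm:BRthm1}---is exactly the paper's, and your treatment of the dominant $A(\bss)(1-t/r)^{-q-1}$ term via the binomial series matches. Two steps, however, do not go through as written.

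\emph{The remainder term.} Your plan for $-r(\bss)\widetilde B(\bss,t)\bigl(1-t/r(\bss)\bigr)^{1-q}$ offers two options, and neither works directly. The ``same binomial estimate'' applies only when the prefactor is constant, which $\widetilde B$ is not; and a circular contour with $|t|>r(\bss)$ crosses the branch cut of $(1-t/r(\bss))^{1-q}$ whenever $q\notin\Z$, so the Cauchy integral is not over a single-valued integrand. The paper sidesteps this entirely: it never splits $B$, but instead bounds $[t^k]B(\bss,t)$ by a Cauchy estimate on the \emph{fixed} circle $|t|=|r(\zero)|+2\delta/3$ (where $B$ is analytic, so no branch issues), and then writes $[t^n]B(\bss,t)(1-t/r(\bss))^{-q}$ as the convolution $\sum_{k}[t^k]B\cdot\binom{n-k+q-1}{q-1}r(\bss)^{k-n}$, which is dominated by a geometric series uniformly in $\bss$. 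That is precisely the ``uniform control despite the drifting singularity'' you flagged as the key difficulty, and it delivers the $O(n^{-1})$ relative error needed.

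\emph{The Hessian.} Your claim that nonsingularity of $\partial^2\lambda$ transfers to $\partial^2\log\lambda$ because they differ by a rank-one correction is false: rank-one perturbations do not preserve invertibility. For instance, with $\lambda(\bss)=1+s_1+\tfrac12(s_1^2+s_2^2)$ one has $\partial^2\lambda\big|_{\zero}=I$ but $\partial^2\log\lambda\big|_{\zero}=\mathrm{diag}(0,1)$. The paper's proof does not attempt this deduction either; it simply passes to Theorem~\ref{thm:BRthm1} once the quasi-power form is established, so in effect one should read the nonsingularity hypothesis in the corollary as the condition Theorem~\ref{thm:BRthm1} actually requires, rather than try to derive one from the other.
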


\begin{proof}
For any fixed $\bss$ with $\|\bss\|<\epsilon$, bounding the modulus of the Cauchy integral
\[ [t^n]B(\bss,t) = \frac{1}{2\pi i} \int_{|t| = |r(\zero)|+2\delta/3} \frac{B(\bss,t)}{t^{n+1}} \, dt \]
by the length of the curve of integration times the modulus of the integrand implies
\[ \left|[t^n]B(\bss,t) \right| \leq C \cdot (|r(\zero)|+2\delta/3)^{-n} \]
for some $C>0$. Thus, taking the coefficient of $t^n$ in~\eqref{eq:BRcor1} gives, after making use of the general binomial expansion $[z^n](1-z)^{-a}=\binom{n+a-1}{a-1}$, the bound
\begin{align*} 
\left|\phi_n(\bss) - A(\bss)\binom{n+q}{q}r(\bss)^{-n} \right| 
&= \left|[t^n]B(\bss,t)\left(1-\frac{t}{r(\bss)}\right)^{-q} \right| \\[+2mm]
& \leq C \sum_{k=0}^n \binom{k+q-1}{q-1} \; |r(\bss)|^{-k} \; (|r(\zero)|+2\delta/3)^{k-n} \\[+2mm]
& \leq C \binom{n+q-1}{q-1} |r(\bss)|^{-n} \sum_{k=0}^n (1 + |r(\bss)|\delta/3)^{-k}
\end{align*}
if $\epsilon$ is small enough such that $|r(\zero) - r(\bss)| < \delta/3$ whenever $\|\bss\|<\epsilon$. This final sum is a geometric series with ratio less than 1, so the asymptotic behaviour of binomial coefficients implies 
\[ \phi_n(\zz) = \phi_n(e^{\bss}) = \binom{n+q}{q}A(\bss)r(\bss)^{-n} \left(1 + O\left(n^{-1}\right)\right) \]
uniformly in a sufficiently small neighbourhood of $\bz=\one$, and the stated CLT follows from Theorem~\ref{thm:BRthm1}.
\end{proof}

\begin{example} 
Bender and Richmond~\cite[p. 261]{BenderRichmond1983} illustrate Corollary~\ref{cor:BRcor1} on the \emph{Tutte polynomials} $T_n(x,y)$ of \emph{wheel graphs} on $n$ vertices, which can be defined recursively by
\[T_n-(x+y+2)T_{n-1}+(xy+x+y+1)T_{n-2}-xyT_{n-3}=0\] 
for $n \geq 3$, along with the initial conditions 
\[ T_0 = xy-x-y+1, \quad T_1 = xy, \quad T_2 = x^2+y^2+xy+x+y.\] 
Solving this recurrence gives a trivariate generating function
\[C(x, y, t) = \sum T_n(x,y)t^n = \frac{(1-x+(xy-y-1)t)(1-y+(xy-x-1)t)-xyt}{(1-t)(1-(x+y+1)t+xyt^2)}\] 
which has, for positive $x$ and $y$, 
\[ t = r(x,y) = \frac{2}{x+y+1 + ((x-y)^2+2x+2y+1)^{1/2}}\] 
as the smallest root of the denominator. Then $C$ has a simple pole at $r$, as the numerator does not vanish at $t=r(x,y)$, and differentiating
\[1-(e^{s_1}+e^{s_2}+1)r\left(e^{s_1},e^{s_2}\right)+e^{s_1+s_2}r\left(e^{s_1},e^{s_2}\right)^2=0\] 
implicitly gives
\[- \grad \log r\left(e^{s_1},e^{s_2}\right) = \left(\frac{e^{s_1}-e^{s_1+s_2}r\left(e^{s_1},e^{s_2}\right)}{1+e^{s_1}+e^{s_2}-2e^{s_1+s_2}r\left(e^{s_1},e^{s_2}\right)}, \quad \frac{e^{s_2}-e^{s_1+s_2}r\left(e^{s_1},e^{s_2}\right)}{1+e^{s_1}+e^{s_2}-2e^{s_1+s_2}r\left(e^{s_1},e^{s_2}\right)}\right).\] 
At $\bss = \mathbf{0}$, further algebraic manipulation shows Corollary~\ref{cor:BRcor1} applies with $q=0$ and $A(\bss) = (1-z/r)C,$ giving a central limit theorem with 
\[\bm = \left(\frac12-\frac{1}{2\sqrt{5}}, \frac12-\frac{1}{2\sqrt{5}}\right) \quad\text{and}\quad 
\mathcal{H} = \begin{bmatrix}
\frac{3}{5\sqrt{5}} & -\frac{2}{5\sqrt{5}} \\
-\frac{2}{5\sqrt{5}} & \frac{3}{5\sqrt{5}} 
\end{bmatrix}.\] 
\end{example}


Further CLTs of this type using the theory of singularity analysis~\cite{FlajoletOdlyzko1990} were derived (in the 1-dimensional case) by Flajolet and Soria~\cite{FlajoletSoria1990,FlajoletSoria1993} and, in the multivariate setting, by Gao and Richmond~\cite{GaoRichmond1992}. In particular, the work of Gao and Richmond allows $C(\zz,t)$ to have algebraic \emph{and} logarithmic-type singularities near $\zz=\one$. Drmota~\cite{Drmota1994a,Drmota1994} used an analytic approach (based on techniques such as the saddle-point method) to derive central limit theorems from generating functions defined implicitly by (systems of) functional equations; see also the treatment in Drmota~\cite[Section 2.2]{Drmota2009}. A detailed survey of analytic methods for the derivation of (largely 1-dimensional) CLTs is given in Flajolet and Sedgewick~\cite[Chapter~IX]{FlajoletSedgewick2009}. 

Central limit theorems of the type described in Corollary~\ref{cor:BRcor1} are derived from knowledge of the generating function $C(\zz,t)$ in a neighbourhood of $\zz=\one$. When further information about the behaviour of $C$ is known for all points with the same coordinate-wise modulus, LCLTs can also be produced.

\begin{theorem}[{Bender and Richmond~\cite[Corollary 2]{BenderRichmond1983}}]
\label{thm:BRthm2}
Let $R$ be a compact subset of $(-\infty,\infty)^d$ and suppose that there exists $\epsilon>0,$ a number $q \in \Q\setminus\{-1,-2\dots\}$, and functions $A(\bss), B(\bss,t),$ and $r(\bss)$ such that 
\begin{equation} 
C(e^{\bss},t) = A(\bss)\left(1-\frac{t}{r(\bss)}\right)^{-q-1} + B(\bss,t)\left(1-\frac{t}{r(\bss)}\right)^{-q} 
\label{eq:BRcor2}
\end{equation}
where
\begin{enumerate}[(i)]
  \item $A(\bss)$ is continuous and non-zero in an $\epsilon$-neighbourhood of $R$,
  \item $r(\bss)$ is non-zero and has continuous third-order partial derivatives in an $\epsilon$-neighbourhood of $R$,
  \item $B(\bss,t)$ is analytic and bounded for $\bss$ in an $\epsilon$-neighbourhood of $R$ and $|t|<|r(\bss)|(1+\epsilon)$,
  \item the Hessian matrix $\mH(\bss)$ of $\lambda(\bss)=1/r(\bss)$ is nonsingular for all $\bss$ in an $\epsilon$-neighbourhood of $R$, and
  \item $C(e^{\bss},t)$ is analytic and bounded whenever $|t|<|r(\textsl{Re}(\bss))|(1+\epsilon)$ and $\epsilon \leq |\textsl{Im}(\bss)_j| \leq \pi$ for all $1 \leq j \leq d$. \label{item:BRthm2}
\end{enumerate} 
Then
\[ c_{\bk,n} \sim r(\ww)^{-n} \,n^q \, \frac{e^{-\ww \cdot \bk^T} A(\ww)}{\Gamma(q+1)\sqrt{(2\pi n)^d \det \mH(\ww)}} \]
uniformly for all $\bk$ such that $\bk/n = -\nabla \log r(\ww)$ has a solution for $\ww\in R$. Furthermore, there is a local limit theorem
\[ c_{\bj,n} \sim c_{\bk,n} \cdot e^{\ww\cdot(\bk-\bj)^T} \left(\exp\left[-\frac{1}{2} \uu \mH(\ww)^{-1}\uu^{T}\right] + o(1)\right) \]
uniformly, where $\uu = (\bj-\bk)/\sqrt{n}$.
\end{theorem}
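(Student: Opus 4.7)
The plan is to start from the multivariate Cauchy integral
$$c_{\bk,n} = \frac{1}{(2\pi i)^{d+1}} \oint \cdots \oint \frac{C(\zz,t)}{\zz^{\bk+\one} \, t^{n+1}} \, d\zz \, dt$$
and apply a two-stage saddle-point analysis: first in $t$, using the explicit singular structure~\eqref{eq:BRcor2} to isolate the dominant pole at $t = r(\zz)$, and then in $\zz$, where a Laplace-type expansion on the torus $|z_j| = e^{w_j}$ extracts the Gaussian contribution around the saddle defined by $\bk/n = -\nabla \log r(\ww)$.

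First I would perform the $t$-integration on the circle $|t| = |r(\ww)|(1 + \epsilon/2)$. Extracting $[t^n]$ from the two summands in~\eqref{eq:BRcor2} via the general binomial expansion, and bounding the $B(\bss,t)$ contribution by contour deformation on the larger circle $|t| = |r(\ww)|(1+\epsilon)$ exactly as in the proof of Corollary~\ref{cor:BRcor1}, yields
$$\phi_n(\zz) = [t^n]C(\zz,t) = A(\bss)\binom{n+q}{q}r(\bss)^{-n}\bigl(1+O(n^{-1})\bigr)$$
uniformly for $\bss = \log\zz$ in an $\epsilon$-neighbourhood of $R$, with Stirling converting $\binom{n+q}{q}$ into $n^q/\Gamma(q+1)$.

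Next I would perform the $\zz$-integration by parametrising $\zz = e^{\ww + i\bt}$ and splitting $\bt \in [-\pi,\pi]^d$ into a central region $\|\bt\| < n^{-1/2+\delta}$ and a tail. Hypothesis~\ref{item:BRthm2} is exactly what is needed to show that, on the tail, $C(\zz,t)$ is analytic and bounded on a polydisc in $t$ of radius strictly greater than $|r(\ww)|$, so deforming the $t$-contour there makes the tail contribution exponentially smaller than the central one. On the central region, the expansion
$$\log r(e^{\ww + i\bt}) = \log r(\ww) + i\bt\cdot\nabla\log r(\ww) - \tfrac{1}{2}\bt\,\mH(\ww)\,\bt^T + O(\|\bt\|^3)$$
shows that the linear term cancels the rescaled factor $e^{-i\bt\cdot\bk}$ whenever $\bk/n = -\nabla\log r(\ww)$, and rescaling $\bt = \bs/\sqrt{n}$ converts the central integral into the Gaussian $\int_{\R^d}\exp(-\tfrac{1}{2}\bs\mH(\ww)\bs^T)\,d\bs = \sqrt{(2\pi)^d/\det \mH(\ww)}$, up to a $1+o(1)$ factor absorbing the cubic remainders. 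Combining everything produces the asymptotic formula for $c_{\bk,n}$. For the LCLT portion, I would run the same computation for $c_{\bj,n}$ but fixing $\ww$ as the saddle point of $\bk$ rather than of $\bj$, which introduces an extra linear factor $e^{-i\bt\cdot(\bj-\bk)}$ in the central integrand; writing $\bj - \bk = \sqrt{n}\uu$ and completing the square produces precisely the correction $\exp(-\tfrac{1}{2}\uu\,\mH(\ww)^{-1}\uu^T)$, while the outer torus radii contribute the prefactor $e^{\ww\cdot(\bk-\bj)^T}$.

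The main obstacle is uniformity. The $O(n^{-1})$ error in the $t$-integration, the exponential tail bound coming from condition~\ref{item:BRthm2}, and the Gaussian approximation in the central region must all be made uniform in $\ww$ over the compact set $R$; this is what underlies uniformity in $\bk$. Compactness of $R$, together with the continuity hypotheses on $A$, $r$, and $\mH$, makes this routine but somewhat delicate bookkeeping, since the allowed ranges of $\bt$, of the tail radius, and of the $n^{-1}$ error constants all depend continuously on $\ww$. For the LCLT one additionally has to control the cubic Taylor remainders uniformly as $\uu$ grows; truncating $\|\uu\| \le n^{\delta'}$ for a small enough $\delta'$ ensures the remainder stays $o(1)$, and outside this range the target Gaussian is exponentially small and may be absorbed into the $o(1)$ error of the statement.
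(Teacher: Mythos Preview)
Your proposal is correct and follows essentially the same route as the paper's proof sketch: first obtain the uniform quasi-power approximation $\phi_n(\zz)\sim\binom{n+q}{q}A(\bss)r(\bss)^{-n}$ by the argument of Corollary~\ref{cor:BRcor1}, then split the remaining $d$-dimensional integral into a central Gaussian region and a tail controlled by hypothesis~\ref{item:BRthm2}. The only cosmetic difference is that the paper packages the central computation in characteristic-function language (the shifted and rescaled $f_n(\bss)\sim e^{-\bss\mH\bss^T/2}$) whereas you phrase it as a direct saddle-point/Laplace expansion of $\log r(e^{\ww+i\bt})$; these are the same calculation.
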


The key to establishing the LCLT in Theorem~\ref{thm:BRthm2} is Condition~\eqref{item:BRthm2}, which implies that the modulus of the singularity $t=r(\bss)$ of $C(\zz,t)$ is uniquely minimized among the points $\zz=e^\bss$ with fixed coordinate-wise modulus when $\zz$ has positive real coordinates. For instance, in one variable, information about $C(z,t)$ near the point $z=1$ is not sufficient to establish local limit theorems, one must also verify that $C(z,t)$ has no other singularities with $|z|=1$ and the same value of $|t|$. The advanced methods of ACSV, however, show that the multivariate situation is more complicated: in at least two variables there can be singularities that do not form `obstructions' to deforming domains of integration, making these singularities irrelevant to determining asymptotic behaviour. We return to this in the context of ACSV in Section~\ref{sec:ACSVandCLT} below.

\begin{proof}[Proof Sketch]
The Cauchy integral formula implies
\[ c_{\bi,n} = \frac{1}{(2\pi i)^d} \int_{|\zz|=e^{\ww}} \phi_n(\zz) \frac{d\zz}{z_1^{i_1+1}\cdots z_d^{i_d+1}} \]
for $\ww\in R$, where $|\zz|=(|z_1|,\dots,|z_d|)$. A modification of the argument used in the proof of Corollary~\ref{cor:BRcor1} shows that
\[ \phi_n(\zz) = \phi_n(e^{\bss}) = \binom{n+q}{q}A(\bss)r(\bss)^{-n} \left(1 + O\left(n^{-1}\right)\right) \]
uniformly for $\bss$ in an $\epsilon$-neighbourhood of $R$, and following the proof of Theorem~\ref{thm:BRthm1} we shift the mean of $\bX_k$ by $-n\nabla \log r(\ww)$ and scale by $1/\sqrt{n}$ to get a new random variable with characteristic function 
\begin{equation} f_n(\bss) \sim \exp\left[-\frac{\bss\mH\bss^T}{2}\right] \label{eq:BR2}\end{equation}
for certain values of $\ww$. Tracing through the definitions of the characteristic functions, and making a polar change of variables, to establish the LCLT from the Cauchy integral above it becomes sufficient to prove that
\[ \left|\int_{[-\pi\sqrt{n},\pi\sqrt{n}]^d}e^{-i(\bss\cdot\bz)}f_n(\bss)d\bss - \int_\R e^{-i(\bss\cdot\bz) - \frac{1}{2}\bss\mH\bss^T} \right| = o(1) \]
for $\ww$ given in the statement of the theorem. Near the origin (up to roughly $\sqrt{n}$-scale) it can be shown that this difference is small using~\eqref{eq:BR2}, and the second integral, which has an explicit integrand, is $o(1)$ when bounded sufficiently away from the origin. Roughly speaking, Condition~\eqref{item:BRthm2} implies that $|\phi_n(\zz)|$ is small when $\zz$ is away from the positive real axis, so the first integral is also $o(1)$ when bounded sufficiently away from the origin and the claimed limit theorem holds. 
\end{proof}

An extension of Theorem~\ref{thm:BRthm2} to functions $C(\zz,t)$ with algebraic and logarithmic singularities was given by Gao and Richmond~\cite{GaoRichmond1992}.

\section{ACSV and Multivariate CLTs}
\label{sec:ACSVandCLT}

The multivariate results in Section~\ref{sec:history} are derived from, in the words of Flajolet and Sedgewick~\cite[Page 768]{FlajoletSedgewick2009}, ``a perturbative theory of one-variable complex function theory.'' In contrast, the methods of ACSV in its modern form are based around the theory of complex analysis \emph{in several variables}. Although the ACSV framework draws from a much larger (and more recently developed) collection of mathematical techniques\footnote{For instance, Pemantle et al.~\cite{PemantleWilsonMelczer2024} includes roughly 100 pages of appendices with (highly abridged) background material, beyond the additional background material discussed in the main text.}, it is possible to package the end results in ways that allow them to be used without understanding most of this background. 

Perhaps the main advantages of ACSV are a unified framework to study the asymptotics of multivariate generating functions beyond those that exhibit the quasi-power behaviour described above (although we mainly stick to the `smooth' case with this behaviour for our treatment here) and, as alluded to previously, a deeper understanding of which singularities contribute to coefficient asymptotics (which can greatly simplify computations). The results are also explicit to the point of being completely automated for large classes of combinatorial generating functions (such as in the software package developed for this article), allow for the computation of asymptotic expansions to arbitrary order, and work under many different sets of assumptions.

\begin{rem}
There are several natural ways to take the sequence indices of $f_{\bi,k}$ to infinity and search for limit theorems: for instance, one could examine \emph{coefficient slices} with $i_1 + \cdots + i_d + k = n$ and take $n\rightarrow\infty$. For us it makes the most combinatorial sense to take the final index to infinity (as in the results above) and study coefficient behaviour in the first $d$ indices, but the methods of ACSV adapt naturally to other situations. Work extending the types of limit theorems derived by our software package is ongoing.
\end{rem}

In this section we give an overview of the methods of ACSV, and survey the explicit limit theorems they provide. Applications of these results are given in Section~\ref{sec:limitExs}. 

\subsection{Background for ACSV}
\label{sec:Background}

Let $F(\bz,t) = G(\bz,t)/H(\bz,t)$ be a ratio of complex-valued functions $G$ and $H$ analytic in a domain $\mD \subset \mathbb{C}^{d+1}$ containing the origin, and suppose that $F$ has a power series expansion 
\[ 
F(\bz,t) 
= \sum_{(\bi,k) \in \N^{d+1}} f_{\bi,k}\bz^{\bi}t^k
= \sum_{(\bi,k) \in \N^{d+1}} f_{i_1,\dots,i_d,k}z_1^{i_1} \cdots z_d^{i_d}t^k
\] 
valid in some neighbourhood of the origin (meaning, in particular, that $H(\zero,0)\neq0$). To derive limit theorems we study the asymptotic behaviour of coefficients $(f_{\br,n})_{\br\in\N^d}$ as $n\rightarrow\infty$. 

Asymptotic arguments typically start with the Cauchy integral representation
\begin{equation}
f_{\br,n} = \frac{1}{(2\pi i)^{d+1}} \int_{\mT} F(\bz,t) \bz^{-\br} t^{-n} \frac{d\bz dt}{z_1\cdots z_d t}, 
\label{eq:CIF}
\end{equation}
where $\mT$ is any product of circles $|z_j|=|t|=\varepsilon$ sufficiently close to the origin. The methods of ACSV manipulate the domain of integration $\mT$ to convert the Cauchy integral~\eqref{eq:CIF} into something that can be asymptotically approximated. As in the more classical univariate case, this process depends heavily on the singular set of the generating function $F$. Because $F$ is a ratio, its singularities form a subset of the analytic variety $\mV = \{(\bz,t) \in \C^{d+1} : H(\bz,t)=0\}$ defined by the vanishing of the denominator $H$, and includes all points where $H$ vanishes and the numerator $G$ does not. In many applications $F$ is a rational function, in which case we may assume that $G$ and $H$ are coprime polynomials and the singular set of $F$ equals the algebraic variety $\mV$ defined by the vanishing of $H$ (a similar characterization holds for general meromorphic functions, but one must introduce the notion of \emph{coprime germs of holomorphic functions}).

Univariate meromorphic functions always have a finite set of \emph{dominant singularities} (the singularities with minimal modulus) dictating the asymptotic behaviour, and explicit expressions for their coefficient asymptotics can be determined by adding up \emph{contributions} given by each of these points. In contrast, if $F$ is not a polynomial and the dimension $d \geq 2$ then the set $\mV$ is infinite and the \emph{geometry} of $\mV$ plays a large role in determining coefficient behaviour. In order to characterize the singularities determining asymptotics, we make the following definitions. 

\begin{defn}
Let $(\bw,s) \in \C_*^{d+1} = (\C \setminus \{0\})^{d+1}$. We say that $(\bw,s)$ is
\begin{itemize}
  \item a \emph{minimal point} if $H(\bw,s)=0$ and there is no element of $\mV$ which is coordinate-wise closer to the origin, i.e., there does not exist $(\by,q)$ with $|y_j|<|w_j|$ for all $1 \leq j \leq d$ and $|q| < |s|$ such that $H(\by,q)=0$;
  \item a \emph{strictly minimal point} if it is minimal and no other point of $\mV$ has the same coordinate-wise modulus;
  \item a \emph{finitely minimal point} if it is minimal and only a finite number of points in $\mV$ have the same coordinate-wise modulus;
  \item a \emph{smooth critical point in the direction $(\br,1) \in \R_{>0}^{d+1}$} if it satisfies the system of equations
  \begin{equation} 
  \label{eq:crit}
  H(\bw,t) = 0, \qquad \frac{w_1}{r_1} H_{z_1}(\bw,s) = \frac{w_2}{r_2} H_{z_2}(\bw,s) = \cdots = \frac{w_d}{r_d} H_{z_d}(\bw,s) = t H_t(\bw,s)
  \end{equation}
  and one of these partial derivatives does not vanish (which, in fact, implies that all of the derivatives do not vanish).
\end{itemize}
\end{defn}

\begin{rem}
If $H$ and all of its partial derivatives simultaneously vanish at a point $\ww$ then either $\ww$ is a zero of $H$ \emph{with multiplicity greater than one} or $\mV$ is not a manifold near $\ww$. In the first case, $H$ can be replaced by its \emph{square-free part} near $\ww$ to determine critical points (when $H$ is a polynomial this means replacing it by the product of its distinct irreducible factors). In the second case, when $\mV$ has non-smooth points, critical points can be defined by \emph{stratifying} $\mV$ into a finite collection of smooth manifolds that `fit together nicely' and calculating critical points on each stratum. In general, if $H$ is a polynomial then the critical points on each stratum are defined by a finite collection of polynomial equalities and inequalities that can be computed automatically from $H$ (see~\cite[Section 8.2]{PemantleWilsonMelczer2024}). To simplify our presentation here we state our main results for the smooth case with zeroes of multiplicity one.
\end{rem}

\subsection{Asymptotic Results}

The earliest techniques of ACSV were derived using an explicit \emph{surgery method} that, in the smooth setting, yields limit theorems similar to the results of Bender and Richmond described above. ACSV determines asymptotic behaviour by manipulating the domain of integration $\mT$ in~\eqref{eq:CIF}, splitting it into some regions where the Cauchy integral is negligible and other regions where the integral can be approximated with analytic techniques. Roughly speaking, in the smooth setting one can push out the domain of integration $\mT$ in~\eqref{eq:CIF} to approach the set of singularities of $F$, take a residue in one variable to reduce to a $d-1$ dimensional integral lying `on the singular set,' and then (hopefully) determine asymptotics of this lower dimensional integral using the saddle-point method. Minimal points are those to which $\mT$ can be easily deformed, while critical points are those where a saddle-point analysis can be performed locally after computing a residue. 

The surgery approach to ACSV (introduced for the smooth case in~\cite{PemantleWilson2002}) applies in the presence of \emph{finitely minimal} critical points. The assumption of finite minimality allows one to make explicit residue computations by fixing the moduli of the $\bz$ variables and varying only the modulus of the $t$ variable. This makes the surgery method a fairly straightforward analogue of the techniques from univariate analytic combinatorics, however finite minimality is difficult to verify computationally and is a stronger condition than necessary. The surgery method is covered in detail in Melczer~\cite[Chapter 5]{Melczer2021}, yielding asymptotic results relying on one further quantity.

\begin{defn}
\label{def:phaseHessian}
Let $(\bw,s) \in \C_*^{d+1}$ be a smooth critical point in the direction $(\br,1)$ and suppose $H_t(\bw,s)\neq0$. The \emph{phase Hessian} $\mH(\bw,s)$ of $H$ at $(\bz,t)=(\bw,s)$ is the $d\times d$ matrix $\mH$ with entries
\begin{equation} 
\label{eq:Hess}
\mH_{i,j} = 
\begin{cases}
r_ir_j + U_{i,j} - r_jU_{i,d+1} - r_iU_{j,d+1} + r_ir_jU_{d+1,d+1} &: i \neq j \\[+3mm]
r_i +r_i^2 + U_{i,i} - 2r_iU_{i,d+1} + r_i^2U_{d+1,d+1} &: i=j
\end{cases}
\end{equation} 
where $U_{i,j} = \frac{w_iw_j H_{z_iz_j}(\bw,s)}{tH_t(\bw,s)}$ for $1 \leq i,j \leq d$ while $U_{i,d+1} = \frac{w_i H_{z_it}(\bw,s)}{H_t(\bw,s)}$ and $U_{d+1,d+1} = \frac{t H_{tt}(\bw,s)}{H_t(\bw,s)}$. The point $(\bw,s)$ is called \emph{nondegenerate} if the phase Hessian matrix $\mH(\bw,s)$ has non-zero determinant.
\end{defn}

\begin{theorem}
\label{thm:ASCV1}
Suppose that the rational function $F(\bz,t)=G(\bz,t)/H(\bz,t)$ admits a nondegenerate strictly minimal smooth critical point $(\bw,s)\in\C_*^{d+1}$ in the direction $\br\in\R_*^{d+1}$, such that $H_t(\bw,s) \neq 0$. Then for any nonnegative integer $M$ there exist computable constants $C_0,\dots,C_M$ such that
\begin{equation} 
f_{n\br} = (w_1^{r_1}\cdots w_d^{r_d}s^{r_{d+1}})^{-n} n^{-d/2} \frac{(2\pi)^{-d/2}}{\sqrt{\det(r_{d+1}\mH)}} \left(\sum_{j=0}^M C_j (r_{d+1}n)^{-j} + O\left(n^{-M-1}\right)\right), 
\label{eq:smoothAsmSimple}
\end{equation}
where $\mH=\mH(\bw,s)$ is the phase Hessian matrix and 
\[ C_0 = \frac{-G(\bw,s)}{s\, H_t(\bw,s)}. \]
The asymptotic expansion~\eqref{eq:smoothAsmSimple} holds uniformly in neighbourhoods $\mR \subset \R_*^{d+1}$ of $\br$ where there is a smoothly varying nondegenerate strictly minimal critical point such that $H_t$ does not vanish.
\end{theorem}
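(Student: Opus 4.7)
The plan is to execute the classical ACSV surgery method alluded to in the text. Starting with the Cauchy integral \eqref{eq:CIF} over a small torus around the origin, strict minimality of $(\bw,s)$ means that the variety $\mV$ meets the torus $\mT_* := \{(\bz,t):|z_j|=|w_j|,\,|t|=|s|\}$ only at $(\bw,s)$. By continuity I can fix a small neighborhood $\mN\subset\mT_*$ of $(\bw,s)$ and a $\delta>0$ such that the remainder $\mT_*\setminus\mN$ can be pushed outward in all radii by a factor $(1+\delta)$ without crossing $\mV$; the resulting Cauchy integral over this outer piece is bounded by $(1+\delta)^{-n}$ times the target order and is absorbed into the remainder.

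Inside $\mN$, the hypothesis $H_t(\bw,s)\neq 0$ and the implicit function theorem parameterize the singular set locally as $t=g(\bz)$ with $g$ analytic and $g(\bw)=s$. Taking the residue in $t$ at the simple pole $t=g(\bz)$ reduces the integral over $\mN$ to
\[ \frac{-1}{(2\pi i)^d}\int_{\mN'}\frac{G(\bz,g(\bz))}{H_t(\bz,g(\bz))\,g(\bz)}\exp\bigl(-n\,\Phi(\bz)\bigr)\,\frac{dz_1\cdots dz_d}{z_1\cdots z_d}, \]
where $\mN'$ is the projection of $\mN$ onto the $\bz$-torus and the phase is $\Phi(\bz)=\sum_{j=1}^{d}r_j\log z_j+r_{d+1}\log g(\bz)$. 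Using $\partial g/\partial z_j=-H_{z_j}/H_t$ at $(\bw,s)$, the angular saddle-point equations $w_j\,\partial_{z_j}\Phi(\bw)=0$ for $\Phi$ restricted to the torus are precisely the smooth critical-point system \eqref{eq:crit}, so $\bw$ is an isolated saddle point.

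The final step is a standard multivariate complex Laplace expansion: parameterize the $\bz$-torus by $z_j=w_je^{i\theta_j}$, expand $\Phi$ and the amplitude $G/(H_t\,g)$ to order $2M+2$ at $\btheta=\zero$, and integrate term-by-term against the Gaussian determined by the quadratic part of $\Phi$. The leading term yields $(2\pi n)^{-d/2}/\sqrt{\det(r_{d+1}\mH)}$ with prefactor $C_0=-G(\bw,s)/(s\,H_t(\bw,s))$, while higher-order coefficients $C_j$ arise as polynomials in the derivatives of $\Phi$ and $G/(H_t\,g)$, giving the full asymptotic expansion to order $M$ with remainder $O(n^{-M-1})$. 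The main obstacle is identifying the angular Hessian of $\Phi$ at the saddle with the matrix $\mH(\bw,s)$ of \eqref{eq:Hess}: this is a chain-rule computation that differentiates $H(\bz,g(\bz))\equiv 0$ twice to express $\partial^2_{z_iz_j}\log g$ in terms of second partials of $H$, producing the $U_{i,j}$, $U_{i,d+1}$, $U_{d+1,d+1}$ entries of \eqref{eq:Hess}; the diagonal $r_i$ shifts and $r_ir_j$ cross terms arise from converting $\bz$-derivatives to $\btheta$-derivatives and from the $r_j\log z_j$ piece of $\Phi$. Uniformity in $\br\in\mR$ then follows because the implicit function theorem yields smoothly varying critical points, nondegeneracy is an open condition, and both $\mN$ and $\delta$ can be chosen uniformly on a compact sub-neighborhood.
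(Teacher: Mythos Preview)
The paper does not prove Theorem~\ref{thm:ASCV1} itself but defers to the surgery method detailed in Melczer~\cite[Chapter~5]{Melczer2021} (originating with Pemantle and Wilson~\cite{PemantleWilson2002}). Your outline---localization via strict minimality, a one-variable residue in $t$ using the implicit parameterization $t=g(\bz)$, identification of the saddle equations for $\Phi$ with the critical-point system~\eqref{eq:crit}, and the chain-rule derivation of the angular Hessian~\eqref{eq:Hess} followed by a multivariate Laplace expansion---is a faithful sketch of precisely that argument.
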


The fact that the asymptotic behaviour described in Theorem~\ref{thm:ASCV1} varies smoothly with the direction $\br$ under small perturbations allows (with a small amount of extra analysis) for the derivation of LCLTs. 

\begin{proposition}[{Melczer~\cite[Proposition 5.10]{Melczer2021}}]
\label{prop:LCLT}
Suppose $F(\bz,t)=G(\bz,t)/H(\bz,t)$ has a power series expansion $F(\bz,t)=\sum_{(\bi,k)\in\N^{d+1}}f_{\bi,k}\bz^{\bi}t^k$ at the origin such that $f_{\bi,k}$ is non-negative for all but a finite number of terms. Suppose further that, in some direction $(\bm,1)$, there is a strictly minimal critical point of the form $(\bone,\rho)$ for some $\rho>0$. If $H_t(\bw,\rho)$ and $G(\bw,\rho)$ are non-zero, and the phase Hessian $\mH$ of $H$ at $(\bone,\rho)$ is nonsingular, then
\begin{equation} 
\sup_{\bs\in\Z^d} n^{d/2}\left|\rho^nf_{\bs,n} - \frac{-G(\bone,\rho)}{\rho H_t(\bone,\rho)}\frac{(2\pi n)^{-d/2}}{\sqrt{\det \mH}} \exp\left[-\frac{(\bs-n\htbm)^T\mH^{-1}(\bs-n\htbm)}{2n}\right] \right|\rightarrow0.
\label{eq:LCLT2}
\end{equation}
\end{proposition}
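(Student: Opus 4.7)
The plan is to deduce this local CLT from the uniform smooth asymptotics of Theorem~\ref{thm:ASCV1}: on a diffusive neighbourhood of $n\htbm$ a Taylor expansion of the rate function will match the Gaussian, while on the tail both terms will be shown to be $o(n^{-d/2})$ uniformly. First I would apply the implicit function theorem to the critical point equations~\eqref{eq:crit} at $(\bone,\rho)$. Nondegeneracy of $\mH$ together with $H_t(\bone,\rho)\neq 0$ give invertibility of the relevant Jacobian, producing smoothly varying critical points $\br\mapsto(\bw(\br),s(\br))$ on a neighbourhood $\mathcal{N}$ of $\htbm$ with $(\bw(\htbm),s(\htbm))=(\bone,\rho)$. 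Shrinking $\mathcal{N}$, strict minimality and the nonvanishing of $G$ and $H_t$ persist by continuity, so Theorem~\ref{thm:ASCV1} with $M=0$ gives
\[ \rho^n f_{\bs,n} = A(\bs/n)\,n^{-d/2}\,e^{n(\log\rho-\psi(\bs/n))}\bigl(1+O(n^{-1})\bigr) \]
uniformly for $\bs/n\in\mathcal{N}$, where $\psi(\br)=\br\cdot\log\bw(\br)+\log s(\br)$ and $A$ is continuous with $A(\htbm)=\frac{-G(\bone,\rho)}{\rho H_t(\bone,\rho)}\cdot\frac{(2\pi)^{-d/2}}{\sqrt{\det\mH}}$.

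The next step is the Legendre-type identities $\psi(\htbm)=\log\rho$, $\nabla\psi(\htbm)=\zero$, and $\nabla^2\psi(\htbm)=\mH^{-1}$, which I would verify by implicitly differentiating~\eqref{eq:crit}: the first two follow directly from $(\bw(\htbm),s(\htbm))=(\bone,\rho)$ together with the critical point equations themselves, and the third is the standard saddle-point duality identity. Taylor's theorem then gives
\[ n(\log\rho-\psi(\bs/n)) = -\frac{(\bs-n\htbm)^T\mH^{-1}(\bs-n\htbm)}{2n} + O\!\left(\frac{\|\bs-n\htbm\|^3}{n^2}\right) \]
uniformly for $\bs/n\in\mathcal{N}$. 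Fixing $0<\epsilon<1/6$ and restricting to the diffusive regime $\|\bs-n\htbm\|\le n^{1/2+\epsilon}$, the cubic error is $O(n^{3\epsilon-1/2})=o(1)$ and $A(\bs/n)\to A(\htbm)$ uniformly by continuity, so multiplying by $n^{d/2}$ matches the claimed Gaussian expression up to a uniform $o(1)$ error.

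The main obstacle is the tail $\|\bs-n\htbm\|>n^{1/2+\epsilon}$, where the Gaussian target is trivially superpolynomially small in $n^{2\epsilon}$ but $\rho^n f_{\bs,n}$ requires separate control. Here I would invoke the nonnegativity hypothesis: since $f_{\bi,k}\ge 0$ away from finitely many exceptions, for $n$ large and any real $\btheta$ in a neighbourhood of $\zero$ we have the Chernoff-type inequality $e^{\btheta\cdot\bs}f_{\bs,n}\le\phi_n(e^\btheta)$. Univariate singularity analysis of $F(e^\btheta,t)$ in $t$ shows that its dominant singularity $\rho_\btheta$ varies smoothly with expansion $\log\rho_\btheta=\log\rho-\btheta\cdot\htbm-\tfrac{1}{2}\btheta^T\mH\btheta+O(\|\btheta\|^3)$, giving $\phi_n(e^\btheta)\le C(\btheta)\rho_\btheta^{-n}$ for large $n$. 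For the moderate tail $n^{1/2+\epsilon}<\|\bs-n\htbm\|\le\delta_0 n$ with $\delta_0$ small, optimizing $\btheta=\mH^{-1}(\bs-n\htbm)/n$ recovers the Gaussian rate function as a Chernoff bound and gives $n^{d/2}\rho^n f_{\bs,n}=o(1)$ uniformly; for the large-deviation tail $\|\bs-n\htbm\|>\delta_0 n$, fixing $\btheta$ of unit-order magnitude in the direction of $\bs-n\htbm$ produces exponential-in-$n$ decay by strict convexity of the Legendre transform of $\btheta\mapsto\log\rho_\btheta$ at $\zero$. The most delicate point of the proof will be verifying that $\rho_\btheta$ remains the strictly dominant singularity of $F(e^\btheta,t)$ throughout the needed neighbourhood of $\zero$, which should follow from the strict minimality of $(\bone,\rho)$ and a continuity/compactness argument on the torus $|\bz|=|e^\btheta|$.
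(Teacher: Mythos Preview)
The paper does not actually prove Proposition~\ref{prop:LCLT}; it is quoted from Melczer's textbook, accompanied only by the remark that the uniformity clause of Theorem~\ref{thm:ASCV1} ``allows (with a small amount of extra analysis) for the derivation of LCLTs.'' Your proposal is exactly that extra analysis, and the overall architecture is correct: implicit-function construction of the nearby critical points $(\bw(\br),s(\br))$, Taylor expansion of the rate function $\psi(\br)=\br\cdot\log\bw(\br)+\log s(\br)$ to second order via the Legendre duality $\nabla\psi(\htbm)=\zero$, $\nabla^2\psi(\htbm)=\mH^{-1}$ on the diffusive window $\|\bs-n\htbm\|\le n^{1/2+\epsilon}$, and a Chernoff bound exploiting nonnegativity for the tails. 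This is also structurally the same as the paper's proof sketch of Theorem~\ref{thm:BRthm2}, transposed from the quasi-power setting to the ACSV setting.

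One step deserves more than the phrase ``persist by continuity.'' Strict minimality of $(\bw(\br),s(\br))$ for $\br$ near $\htbm$ is a \emph{global} statement about how the torus $T(|\bw(\br)|,|s(\br)|)$ meets $\mV$, and is not a local continuity fact. The honest argument combines compactness of that torus, closedness of $\mV$, and the observation that $(\bw(\br),s(\br))$ remains in the positive orthant so that (for a series with nonnegative coefficients) it is automatically a minimal point on its torus; one then rules out other torus intersections by showing they would have to limit onto $T(\bone,\rho)\cap\mV=\{(\bone,\rho)\}$. You already flag precisely this compactness issue as ``the most delicate point'' for $\rho_\btheta$ in the Chernoff half of the argument --- it is the \emph{same} issue, and it is needed in the first half as well to invoke the uniform version of Theorem~\ref{thm:ASCV1}. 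With that acknowledged, the plan is complete.
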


The requirement of strict minimality in Proposition~\ref{prop:LCLT} is analogous to Condition~\eqref{item:BRthm2} in Theorem~\ref{thm:BRthm2} above. The matrix in Definition~\ref{def:phaseHessian} equals the $\mH$ in Theorems~\ref{thm:BRthm1} and~\ref{thm:BRthm2} up to sign, with the entries now determined explicitly from evaluations of partial derivatives of $H$ using~\eqref{eq:Hess}. 

\begin{rem}
Proposition 5.10 in Melczer~\cite{Melczer2021} requires $G$ and $H$ to be polynomials to simplify its presentation, however the result continues to hold for analytic functions~\cite[Remark 5.14]{Melczer2021}. Other generalizations that can be handled with the ACSV framework include (non-power series) Laurent expansions, non-smooth geometries, and non-minimal points when additional assumptions are verified.
\end{rem}

The most computationally expensive hypothesis to verify in Proposition~\ref{prop:LCLT} is strict minimality (see Melczer and Salvy~\cite{MelczerSalvy2021} for a complexity analysis of smooth ACSV methods). Generically\footnote{For instance, these properties hold for all polynomials $H$ except for those whose coefficients lie in an algebraic set determined only by the degree of $H$.} $\sing$ is smooth and the set of smooth critical point equations~\eqref{eq:crit} admits a finite set of solutions, however verifying strict minimality of a point $\ww$ requires examining whether $\sing$ intersects the (always infinite) set of points with the same coordinate-wise moduli as~$\ww$. The perturbative approach in Section~\ref{sec:combCLT} and the surgery ACSV method both require strict minimality (or finite minimality with some extra bounding), however more advanced multivariate methods show that this can be weakened.

Indeed, in the univariate setting it is the singularities of minimal modulus that contribute to dominant asymptotic behaviour, so it is tempting to assume that minimal singularities are the ones determining dominant multivariate asymptotics. In fact, as the theory of ACSV matured its methods were re-examined through more advanced mathematical frameworks, illustrating how \emph{critical points} are the singularities dictating asymptotic behaviour. The most explicit results still hold for minimal critical points, however one only needs to verify that the (generically finite) set of critical points has no other elements with the same coordinate-wise modulus as a candidate minimal critical point. Without this strengthening, algorithms to compute asymptotics (or rigorously establish limit theorems) would not terminate in reasonable time even for examples with relatively low dimension and degree.

\begin{theorem}[{Pemantle, Wilson, and Melczer~\cite[Theorem 9.12]{PemantleWilsonMelczer2024}}]
\label{thm:ASCV2}
Suppose that the rational function $F(\bz,t)=G(\bz,t)/H(\bz,t)$ admits a nondegenerate minimal smooth critical point $(\bw,s)\in\C_*^{d+1}$ in the direction $\br\in\R_*^{d+1}$, such that $H_t(\bw,s) \neq 0$ and no other critical point has the same coordinate-wise modulus as $(\bw,s)$. Then the conclusions of Theorem~\ref{thm:ASCV1} hold, where the expansion~\eqref{eq:smoothAsmSimple} now holds uniformly over neighbourhoods where there is a smoothly varying nondegenerate minimal critical point such that $H_t$ does not vanish and no other critical point has the same coordinate-wise modulus.
\end{theorem}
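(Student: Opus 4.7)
The plan is to follow the general template of the proof of Theorem~\ref{thm:ASCV1}, beginning with the Cauchy integral~\eqref{eq:CIF} and deforming the torus $\mT$ to concentrate the integral in arbitrarily small neighbourhoods of the contributing critical points. The difference from the strictly minimal case is that the explicit surgery argument (fixing the moduli of the $\bz$ variables while varying only $|t|$, so that an iterated one-variable residue can be taken) is no longer available, so a more flexible topological argument is required to reduce the Cauchy chain to a quasi-local cycle at $(\bw,s)$.

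First I would observe that the relevant exponential decay rate of the integrand in~\eqref{eq:CIF} in the direction $(\br,1)$ is governed by the height function $h(\bz,t) = -\log|z_1^{r_1}\cdots z_d^{r_d} t|$, whose critical points on the smooth locus of $\sing$ are precisely the smooth critical points characterized by~\eqref{eq:crit}. Minimality of $(\bw,s)$ means that the open polydisc $D = \{|z_j|<|w_j|,\ |t|<|s|\}$ is disjoint from $\sing$, so $\mT$ may be deformed outward inside $D$ toward the distinguished boundary torus at the coordinate-wise modulus of $(\bw,s)$ without crossing $\sing$. The obstruction to pushing the contour further is captured by the relative homology of a neighbourhood of $\sing$ with respect to sublevel sets of $h$, which by (stratified) complex Morse theory can change only across the critical values of $h$.

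Because $(\bw,s)$ is minimal and no other critical point shares its coordinate-wise modulus, it is the unique critical point of $h$ at its own height and no critical point lies strictly below that height in the relevant chain class. Consequently the Cauchy chain decomposes, modulo a chain on which the integrand is exponentially smaller than $(\bw^\br s)^{-n}$, into a single quasi-local cycle supported in an arbitrarily small neighbourhood of $(\bw,s)$. Once the integral is localized I would take a residue in $t$, valid since $H_t(\bw,s)\neq0$, obtaining a $d$-dimensional oscillatory integral whose phase has a nondegenerate critical point with Hessian equal to the phase Hessian $\mH(\bw,s)$ of Definition~\ref{def:phaseHessian}. The saddle-point method then yields the expansion~\eqref{eq:smoothAsmSimple} with leading constant $-G(\bw,s)/(sH_t(\bw,s))$. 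Uniformity in a neighbourhood of $\br$ follows because the critical point, the height function, the Hessian, and the quasi-local cycle construction all vary smoothly under the stated hypotheses.

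The main obstacle is the homological reduction in the second and third paragraphs. In the strictly minimal setting this reduction is elementary because iterated residues can be computed directly on the chosen torus, but here one must appeal to the intersection class machinery for multivariate complex Morse theory developed in Chapter 9 of Pemantle, Wilson, and Melczer~\cite{PemantleWilsonMelczer2024}. The crucial technical point is that the hypothesis ``no other critical point shares the coordinate-wise modulus of $(\bw,s)$'' is exactly what guarantees the contributing quasi-local cycle is supported near a single point, so that the saddle-point analysis yields a single leading-order term rather than an interference of equal-height contributions; minimal but non-critical singularities on the same torus are irrelevant because they do not obstruct homological deformation of the contour.
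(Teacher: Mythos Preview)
The paper does not actually prove Theorem~\ref{thm:ASCV2}; it cites the result from~\cite[Theorem~9.12]{PemantleWilsonMelczer2024} and offers only a short discussion paragraph immediately following the statement. Your outline is a faithful sketch of the Morse-theoretic argument in the cited source, and in that sense it is correct.

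There is, however, a difference in emphasis worth noting. The paper's discussion frames the key step in the language of \emph{cones of hyperbolicity} from Baryshnikov and Pemantle~\cite{BaryshnikovPemantle2011}: at a minimal point the tangent plane to $H(e^{x_1},\dots,e^{x_d},e^q)$ has a real normal vector $\vv$, and when that point is not critical $\vv$ is not parallel to $\rr$, which supplies an explicit direction in which the contour may be locally pushed past the singular set into a region where the integrand decays. The hyperbolic-cone framework certifies that these local pushes patch together consistently. Your outline instead packages the same deformation via the height function $h$, relative homology, and quasi-local cycles from Chapter~9 of~\cite{PemantleWilsonMelczer2024}. The two viewpoints are compatible---the hyperbolicity direction is precisely what lowers $h$ locally---but the paper's summary highlights the concrete geometric mechanism (real normal not parallel to $\rr$) that explains \emph{why} non-critical minimal singularities on the same torus do not obstruct the deformation, whereas you defer this to the abstract Morse-theoretic statement that homology only changes at critical values. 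Either framing is adequate for a survey-level justification.
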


Theorem~\ref{thm:ASCV2} was first discussed in Pemantle and Baryshnikov~\cite{BaryshnikovPemantle2011} using \emph{cones of hyperbolicity}. To briefly summarize, if $(\bz,t)$ is a minimal point then the tangent plane to the modified function $H(e^{x_1},\dots,e^{x_d},e^q)$ at $(\log(\bz),\log(t))$ is defined by a normal vector that is a multiple of a real vector $\vv$. If $(\bz,t)$ is not critical then $\vv$ is not parallel to the direction vector $\rr$ and this can be used to locally deform a domain of integration near $(\bz,t)$ into a region of complex space where it can be bounded and shown to be negligible. The framework of hyperbolic cones shows that these local deformations can be done in a consistent manner (away from critical points) and also generalizes to non-smooth cases. 

\begin{example}
The rational function
\[ F(x,t) = \frac{1}{(1+x)(2-x-t)} \]
admits $(x,t) = (1,1)$ as a minimal critical point in the direction $\br=(1,1)$, however this point is not finitely minimal as $(-1,t)$ is a singularity for any $t \in \C$, and none of Theorem~\ref{thm:BRthm2}, Theorem~\ref{thm:ASCV1}, or Proposition~\ref{prop:LCLT} directly apply. Since there are no other critical points with the same coordinate-wise modulus as $(1,1)$, Theorem~\ref{thm:ASCV2} does apply and we can compute 
\[ f_{n,n} = n^{-1/2}\left(\frac{1}{4\sqrt{\pi}} + O\left(n^{-1}\right)\right). \]
In fact, the local central limit theorem 
\[ \sup_{s\in\Z} n^{1/2}\left|f_{s,n} - \frac{1}{4}\frac{1}{\sqrt{\pi n}} \exp\left[-\frac{(s-n)^2}{4n}\right] \right|\rightarrow0 \]
holds.
\end{example}

When dealing with combinatorial generating functions, the importance of critical points can be seen directly. Indeed, the idea underpinning cones of hyperbolicity (that the tangent space near any smooth minimal point has a normal vector that is a multiple of a real vector) implies that every minimal point is critical in some direction. When the series under consideration has nonnegative coefficients, this implies that any minimal point with the same coordinate-wise modulus as a critical point with positive coordinates is also critical~\cite[Corollary 5.5]{Melczer2021}, giving the following result on which we base our algorithm for automatically finding and verifying LCLTs.

\begin{proposition}
\label{prop:LCLT2}
Suppose $F(\bz,t)=G(\bz,t)/H(\bz,t)$ has a power series expansion $F(\bz,t)=\sum_{(\bi,k)\in\N^{d+1}}f_{\bi,k}\bz^{\bi}t^k$ at the origin such that $f_{\bi,k}$ is non-negative for all but a finite number of terms. Suppose further that, in some direction $(\bm,1)$, there is a minimal critical point of the form $(\bone,\rho)$ for some $\rho>0$ and no other critical point has the same coordinate-wise modulus. If $H_t(\bw,\rho)$ and $G(\bw,\rho)$ are non-zero, and the phase Hessian $\mH$ of $H$ at $(\bone,\rho)$ is nonsingular, then the LCLT~\eqref{eq:LCLT2} holds.
\end{proposition}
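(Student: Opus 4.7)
The plan is to reduce Proposition~\ref{prop:LCLT2} to the already-established Proposition~\ref{prop:LCLT} by showing that the weaker hypothesis (minimal critical point with no other \emph{critical} point of equal coordinate-wise modulus, plus non-negativity of the coefficients) automatically upgrades to the stronger hypothesis of Proposition~\ref{prop:LCLT}, namely that $(\bone,\rho)$ is a \emph{strictly} minimal critical point. Once strict minimality is in hand, every remaining hypothesis of Proposition~\ref{prop:LCLT} --- non-vanishing of $H_t$ and $G$ at $(\bone,\rho)$, and non-singularity of the phase Hessian $\mH$ --- is part of the assumptions of Proposition~\ref{prop:LCLT2}, so the conclusion~\eqref{eq:LCLT2} follows immediately.

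The upgrade proceeds in two short steps. First, I would observe that any point $(\by,q)\in\sing$ with $|\by|=\bone$ and $|q|=\rho$ is itself minimal: if there were $(\bu,v)\in\sing$ with $|u_j|<|y_j|=1$ for all $j$ and $|v|<|q|=\rho$, this would contradict the minimality of $(\bone,\rho)$. Second, I would invoke Corollary 5.5 of~\cite{Melczer2021} (cited in the paragraph preceding Proposition~\ref{prop:LCLT2}): since the coefficients $f_{\bi,k}$ are non-negative for all but finitely many $(\bi,k)$ and $(\bone,\rho)$ is a critical point with strictly positive coordinates, any minimal point of $\sing$ with the same coordinate-wise modulus as $(\bone,\rho)$ must also be critical. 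Combining these two facts with the hypothesis that no other critical point shares the coordinate-wise modulus of $(\bone,\rho)$, we conclude that no other point of $\sing$ shares this modulus either; that is, $(\bone,\rho)$ is strictly minimal. Proposition~\ref{prop:LCLT} then delivers~\eqref{eq:LCLT2}.

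The main conceptual point --- and the only place one has to be careful --- is the invocation of Corollary 5.5: it crucially uses the non-negativity of the coefficients and the positivity of the coordinates of $(\bone,\rho)$, both of which are built into our hypotheses. A small sanity check is that the exceptional finitely many negative coefficients do not obstruct the argument; they perturb $F(\bz,t)$ by a polynomial which is analytic on $\C^{d+1}$ and so has no effect on the singular variety $\sing$, minimality, criticality, or the phase Hessian.

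An alternative route, should one prefer not to pass through Corollary 5.5, is to follow the proof of Proposition~\ref{prop:LCLT} line by line but replace the appeal to Theorem~\ref{thm:ASCV1} with an appeal to Theorem~\ref{thm:ASCV2}. The latter delivers the same uniform asymptotic expansion~\eqref{eq:smoothAsmSimple} under precisely the weakened hypothesis appearing in Proposition~\ref{prop:LCLT2}, and uniformity over a neighbourhood of the direction $(\bm,1)$ is the only input needed to convert the point-wise asymptotic into the supremum bound~\eqref{eq:LCLT2} via Fourier inversion (shifting the mean by $n\bm$ and rescaling by $1/\sqrt n$). Either way, the work has already been done elsewhere in the paper, and the role of this proposition is simply to package it in a form tailored to combinatorial applications.
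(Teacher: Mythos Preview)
Your proposal is correct and matches the paper's approach: the paper states Proposition~\ref{prop:LCLT2} without a separate proof, treating it as an immediate consequence of the preceding paragraph, which invokes exactly the fact you cite (Corollary~5.5 of~\cite{Melczer2021}) to upgrade minimality to strict minimality when coefficients are non-negative and the critical point has positive coordinates. Your alternative route through Theorem~\ref{thm:ASCV2} is equally valid and also implicit in the paper's placement of the proposition.
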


Going even further, Baryshnikov, Pemantle, and Melczer~\cite{BaryshnikovMelczerPemantle2022} show how asymptotic behaviour can be characterized in the absence of minimal critical points using techniques from \emph{stratified Morse theory}. This requires introducing the notion of \emph{critical points at infinity} and gives asymptotics as a linear combination of asymptotic expansions with (generally unknown) integer coefficients, so here we stick to the explicit case of (not necessarily strictly) minimal critical points covered by Theorem~\ref{thm:ASCV2}.

Finally, although it is beyond the scope of the present article, we give a non-smooth example from forthcoming work that illustrates the differences from the smooth cases discussed above.

\begin{example}
Consider the generating function 
\[ F(x,t) = \frac{6}{(1-3t(1+x)) \, (1-2t(2+x))}. \]
The methods of ACSV for \emph{transverse multiple points} (see~\cite[Chapter 9]{Melczer2021} or~\cite[Chapter 10]{PemantleWilsonMelczer2024}) imply that the asymptotic behaviour of $f_{\lambda n, n}$ varies with $\lambda \in (0,1)$ in a computable manner. Indeed, if $0 < \lambda < 1/3$ or $1/2 < \lambda < 1$ then the dominant asymptotic behaviour of $f_{\lambda n,n}$ is dictated by a smooth minimal critical point vanishing on only one of the denominator factors of $F$, and $f_{\lambda n,n} \rightarrow 0$ exponentially quickly. In contrast, when $\lambda \in (1/3,1/2)$ then the dominant asymptotic behaviour is determined by the non-smooth minimal critical point $(1,1)$ where both denominator factors vanish, and the methods of ACSV imply $f_{\lambda n,n} \rightarrow 6$. 

\begin{figure}[h!]
\centering
\includegraphics[width=0.32\linewidth]{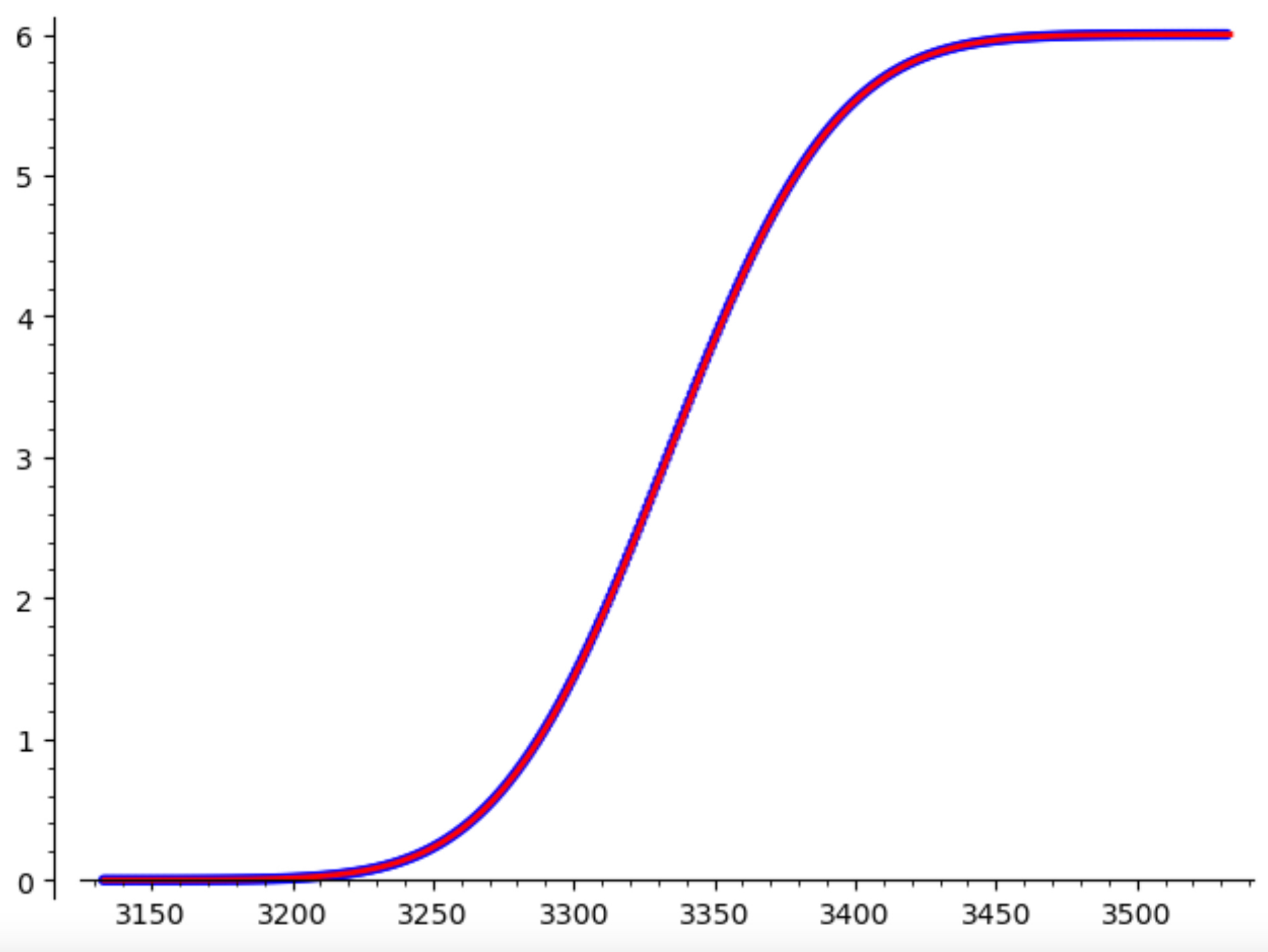} \;
\includegraphics[width=0.32\linewidth]{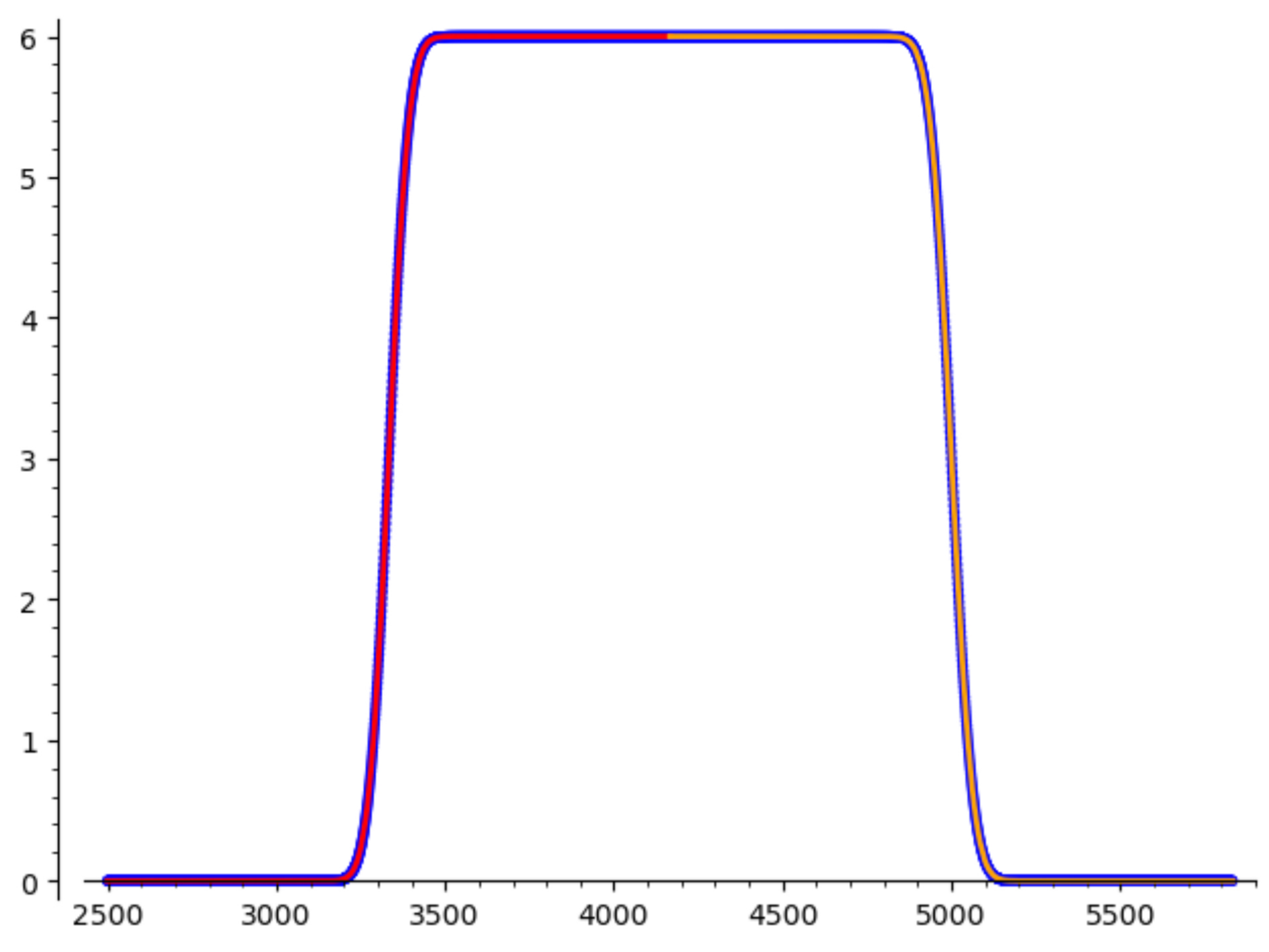} \;
\includegraphics[width=0.32\linewidth]{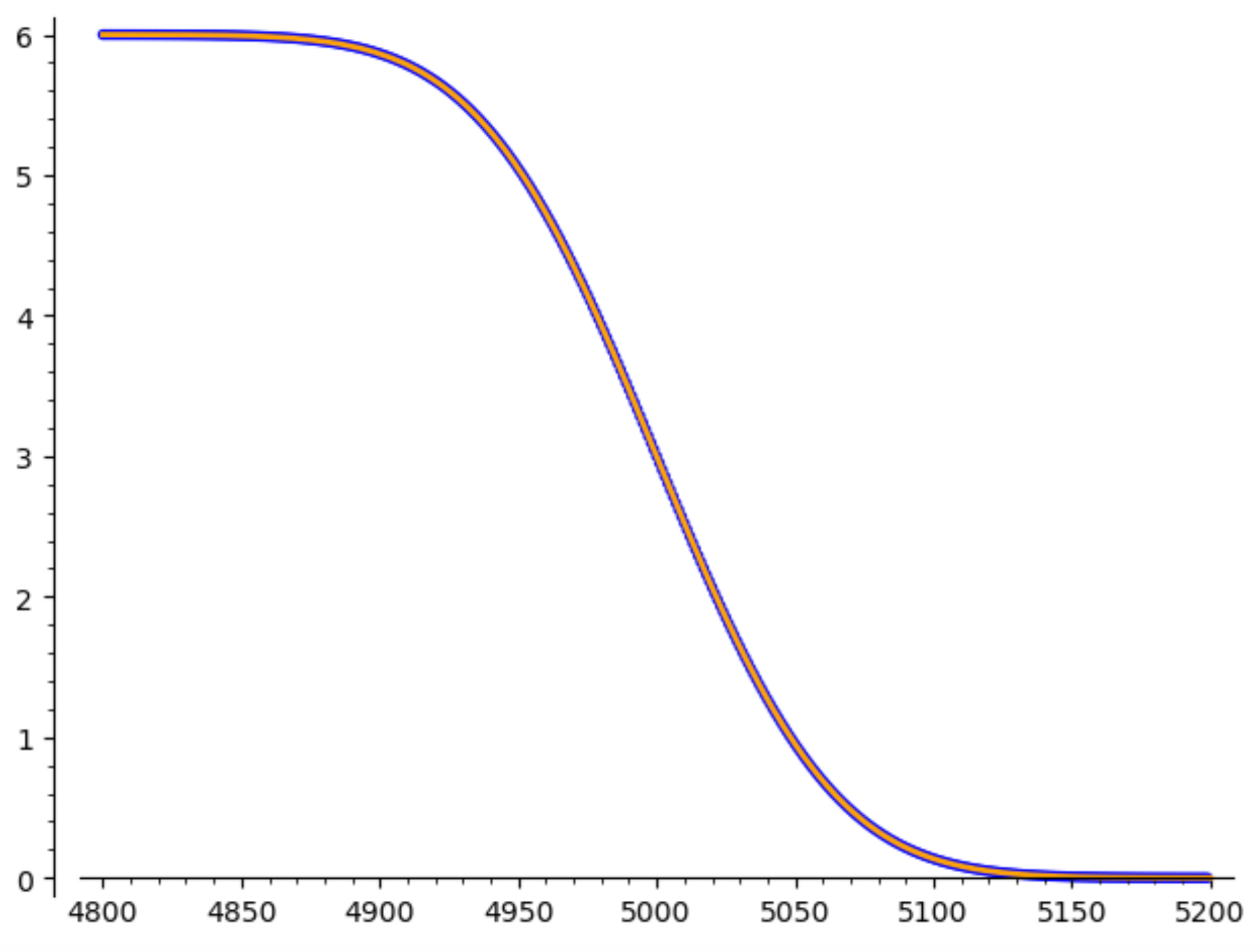}
\caption{The coefficients $c(k) = f_{k,N}$ (in blue) compared to their limiting behaviour (in red and orange) when $N=1000$. The transitions in behaviour near $N/3$ and $N/2$ are shown on intervals of length $2\sqrt{N}$ in the left and right plots.}
\label{fig:NonSmooth}
\end{figure}

Thus, unlike the smooth case where a central limit theorem is obtained, here we have a range of indices where the same minimal critical point determines asymptotics, giving a limiting distribution with a flat plateau (see the middle plot in Figure~\ref{fig:NonSmooth}). A minor modification of the arguments in~\cite[Section 6.2]{BaryshnikovMelczerPemantle2024a} -- developed there only for generating functions whose denominator factors are linear -- shows how to capture the transition between the different limiting regimes, which occurs on a square-root scale. Indeed, if
\[ \Phi(t) = \frac{2}{\sqrt{\pi}}\int_0^t e^{-s^2}ds \]
is the \emph{Gaussian error function} then
\[
f_{n/3 + t\sqrt{n}, \; n} \sim 3 + 3\Phi(3t/2) \quad \text{ and } \quad
f_{n/2 + t\sqrt{n}, \; n} \sim 3 - 3\Phi(\sqrt{2} \, t)
\]
when $t$ grows sufficiently slower than $\sqrt{n}$ (for instance, when $t$ bounded). See Figure~\ref{fig:NonSmooth} for an illustration.
\end{example}

\subsection{Verifying Minimality}

Because verifying criticality is easier than verifying minimality, most ACSV algorithms for asymptotics fix a direction, compute critical points in this direction, and then study the critical points to determine which are minimal points. In contrast, because Proposition~\ref{prop:LCLT} requires a minimal critical point of the form $(\bone,t)$, to prove an LCLT it is often easiest to use~\eqref{eq:crit} to discover a direction $\br = (\bm,1)$ with critical points of this form and then verify the required conditions. 

Determining minimality is easier for points with positive coefficients when $F(\bz)$ has only a finite number of non-negative coefficients, as it does under our assumptions. The following result should be seen as a multivariate generalization of the well-known Vivanti-Prinsheim theorem in the univariate case, and the approach to proving an LCLT that it suggests when combined with Proposition~\ref{prop:LCLT2} is summarized in Figure~\ref{fig:schema2}. 

\begin{lemma}[{Melczer~\cite[Lemma 5.7]{Melczer2021}}]
\label{lemma:minimal}
Suppose $F(\bz,t)=G(\bz,t)/H(\bz,t)$ has a power series expansion at the origin with (at most) a finite number of negative coefficients. Then $(\bw,\rho) \in \R_{>0}^d$ is minimal if the line segment from the origin to $(\bw,\rho)$ contains no roots of $H(\bz,t)$, i.e., if
\[ H(sw_1,\dots,sw_d,s\rho) \neq 0 \text{ for all } s\in(0,1). \]
\end{lemma}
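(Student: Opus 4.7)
The plan is to reduce the multivariate minimality question to the classical univariate Vivanti--Pringsheim theorem by restricting to the diagonal $h(s) := F(s\bw, s\rho)$ along the segment appearing in the hypothesis.

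First, since $F$ has only finitely many negative Taylor coefficients at the origin, I would absorb them into a polynomial $P(\bz, t)$, so that $\tilde F := F + P$ has an entirely non-negative Taylor expansion at the origin. As a rational function $\tilde F = (G + PH)/H$ has the same denominator as $F$, and since $\gcd(G + PH, H) = \gcd(G, H)$, replacing $F$ by $\tilde F$ does not alter the singular locus (we may assume $G, H$ are in lowest terms throughout). Next, consider $h(s) := \tilde F(sw_1, \dots, sw_d, s\rho)$. By the hypothesis combined with the analyticity of $\tilde F$ at the origin, $h$ is analytic on a complex neighbourhood of the real interval $[0, 1)$, and its Taylor coefficients at $s = 0$ are
\[ \alpha_n = \sum_{|\bi|+k=n} \tilde a_{\bi,k}\, w_1^{i_1} \cdots w_d^{i_d} \rho^k, \]
which are non-negative since each $\tilde a_{\bi,k} \geq 0$ and every $w_j$ and $\rho$ is strictly positive.

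I then invoke the univariate Vivanti--Pringsheim theorem: a power series with non-negative coefficients and finite radius of convergence $R$ must have a singularity at $s = R$. Since $h$ is analytic throughout $[0, 1)$, its radius of convergence is at least $1$, so $\sum_n \alpha_n s^n$ converges for every $s < 1$. Given any $(\bz, t)$ in the open polydisc $\{(\bz,t) : |z_j| < w_j,\ |t| < \rho\}$, choose $s < 1$ with $|z_j| \leq s w_j$ for all $j$ and $|t| \leq s \rho$; the pointwise bound $|\tilde a_{\bi,k}\, \bz^{\bi} t^k| \leq \tilde a_{\bi,k}\, (s\bw)^{\bi}(s\rho)^k$ then shows that the multivariate Taylor series of $\tilde F$ converges absolutely at $(\bz, t)$, so $\tilde F$ is analytic throughout the open polydisc.

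Finally, suppose for contradiction that $(\by, q) \in \mV$ satisfied $|y_j| < w_j$ and $|q| < \rho$. Then $\tilde F$ would have a singularity at the interior point $(\by, q)$, contradicting the analyticity just established; hence no such $(\by, q)$ exists, and $(\bw, \rho)$ is minimal. The main obstacle is conceptual rather than computational: one must leverage the non-negative-coefficient hypothesis to upgrade analyticity along a single real ray into analyticity throughout the entire open polydisc, and this is precisely what the reduction $F \mapsto \tilde F$ combined with Pringsheim's theorem buys us, since without the non-negativity of the coefficients there would be no way to rule out destructive cancellations in the multivariate series away from the diagonal.
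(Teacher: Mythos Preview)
The paper does not actually supply a proof of this lemma; it is quoted verbatim from Melczer's textbook and then used as a black box in Step~2 of Section~\ref{sec:GenForm}. So there is no in-paper argument to compare against.

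Your argument is correct. The reduction to a non-negative series by subtracting off a polynomial, the restriction to the diagonal $h(s)=\tilde F(s\bw,s\rho)$, and the application of Vivanti--Pringsheim to force the radius of convergence of $h$ to be at least $1$ are all sound. The key step---upgrading convergence of $\sum_n\alpha_n s^n$ for $s<1$ to absolute convergence of the full multivariate series on the open polydisc via the termwise domination $|\tilde a_{\bi,k}\bz^{\bi}t^k|\leq \tilde a_{\bi,k}(s\bw)^{\bi}(s\rho)^k$---is exactly the place where non-negativity of the coefficients is used, and you identify this clearly. The only minor omission is that you should remark that the limit function defined by the convergent series agrees with $\tilde F$ on a neighbourhood of the origin and hence (by the identity principle) everywhere both are defined, so that $\tilde F$ genuinely extends analytically to the polydisc; you essentially say this, but it could be made explicit. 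With coprimality of $G$ and $H$ assumed, every zero of $H$ in the polydisc would then be a genuine singularity of $\tilde F$, completing the contradiction.
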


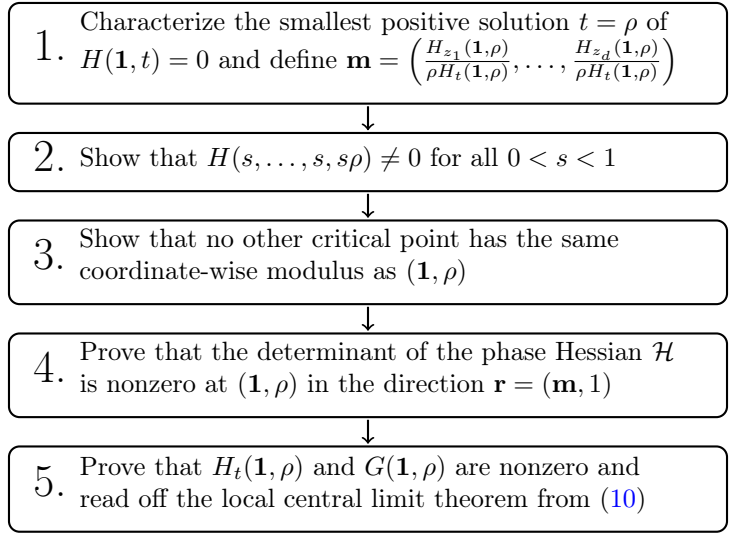
\begin{figure}
\centering
\tikzstyle{vspecies}=[rectangle, minimum size=0.5cm,text width = 26.5em,draw=black,fill=white, rounded corners,thick]
\begin{tikzpicture}[auto, outer sep=1pt, node distance=1.5cm]
\node [vspecies] (B) {\hspace{0.035in} {\LARGE 1.} \begin{minipage}{0.87\linewidth} Characterize the smallest positive solution \mbox{$t=\rho$} of $H(\bone,t)=0$ and define $\bm = \left(\frac{H_{z_1}(\bone,\rho)}{\rho H_t(\bone,\rho)},\dots,\frac{H_{z_d}(\bone,\rho)}{\rho H_t(\bone,\rho)}\right)$  \vspace{0.05in} \end{minipage}} ;
\node [vspecies, below = 0.32cm of B] (C) {\hspace{0.035in} {\LARGE 2.} \begin{minipage}{0.87\linewidth} Show that $H(s,\dots,s,s\rho)\neq 0$ for all $0<s<1$ \vspace{0.05in} \end{minipage}} ;
\node [vspecies, below = 0.32cm of C] (C2) {\hspace{0.035in} {\LARGE $3$.} \begin{minipage}{0.87\linewidth} Show that no other critical point has the same coordinate-wise modulus as $(\bone,\rho)$ \vspace{0.05in} \end{minipage}} ;
\node [vspecies, below = 0.32cm of C2] (D) {\hspace{0.035in} {\LARGE 4.} \begin{minipage}{0.87\linewidth} Prove that the determinant of the phase Hessian $\mH$ is nonzero at $(\bone,\rho)$ in the direction $\br = (\bm,1)$ \vspace{0.05in} \end{minipage}} {};
\node [vspecies, below = 0.32cm of D] (E) {\hspace{0.035in} {\LARGE 5.} \begin{minipage}{0.87\linewidth} Prove that $H_t(\bone,\rho)$ and $G(\bone,\rho)$ are nonzero and read off the local central limit theorem from~\eqref{eq:LCLT2}  \vspace{0.05in} \end{minipage}} ;
\draw [<-,thick] (C) --  node {} (B) ;
\draw [<-,thick] (C2) --  node {} (C) ;
\draw [<-,thick] (D) --  node {} (C2) ;
\draw [<-,thick] (E) --  node {} (D) ;
\end{tikzpicture}
\caption{A schema to prove LCLTs using Proposition~\ref{prop:LCLT2}.}
\label{fig:schema2}
\end{figure}

Although proving strict minimality is difficult in general, there is one case arising in some combinatorial applications where it is automatic.

\begin{defn}
A power series $S(\bz) = \sum_{\bn\in\mathbb{N}^d}p_{\bn}\bz^{\bn}$ is called \textit{aperiodic} if every element of $\mathbb{Z}^d$ can be written as an integer linear combination of the exponents $\{\bn\in\mathbb{N}^d:p_{\bn}\neq0\}$ appearing in $S$.
\end{defn}

\begin{proposition}[{Melczer~\cite[Proposition 5.5]{Melczer2021}}]
\label{prop:strictmin}
Suppose $F(\bz) = G(\bz)/H(\bz)$ is a ratio of functions $G$ and $H$. If $H(\bz) = 1 - S(\bz)$ for some aperiodic power series $S$ with non-negative coefficients then every minimal point that is within the domain of convergence of the power series expansion of $F(\bz)$ is strictly minimal and has positive real coordinates.
\end{proposition}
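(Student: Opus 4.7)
The plan is to exploit the triangle inequality for $S(\bw) = \sum p_{\bn}\bw^{\bn}$ together with the aperiodicity hypothesis. Since a minimal point $\bw$ satisfies $H(\bw) = 0$, i.e., $S(\bw) = 1$, and $S$ has non-negative coefficients, the inequality $1 = |S(\bw)| \le \sum p_{\bn}|\bw|^{\bn} = S(|\bw|)$ is immediate (where $|\bw| = (|w_1|,\dots,|w_d|)$). My first step is to upgrade this to an equality by using minimality itself: if $S(|\bw|) > 1$, then $S$ is continuous and non-decreasing along the positive-real segment from the origin to $|\bw|$, takes the value $S(\zero)=0$ at the origin (using $H(\zero)=1$), and so by the intermediate value theorem there exists $s \in (0,1)$ with $S(s|\bw|) = 1$. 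The point $s|\bw|$ then lies in $\mV$ and is coordinate-wise strictly closer to the origin than $\bw$, contradicting minimality. Hence $S(|\bw|) = 1$.

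The second step is to extract geometric information from the equality $|S(\bw)| = S(|\bw|)$. Since $\sum p_{\bn} \bw^{\bn}$ equals the real positive number $1$ and $\sum p_{\bn} |\bw^{\bn}|$ equals the same value, the equality case of the triangle inequality forces each non-zero term $p_{\bn}\bw^{\bn}$ to be a non-negative real, so $\bw^{\bn} = |\bw|^{\bn}$ for every $\bn$ in the support of $S$. Writing $w_j = |w_j|e^{i\theta_j}$, this means $\sum_j n_j \theta_j \equiv 0 \pmod{2\pi}$ for every $\bn$ appearing in $S$. Aperiodicity now says every $\bm \in \Z^d$ is a $\Z$-linear combination of such $\bn$, so $\sum_j m_j \theta_j \equiv 0 \pmod{2\pi}$ for all $\bm \in \Z^d$; taking $\bm$ to be standard basis vectors forces each $\theta_j \equiv 0 \pmod{2\pi}$, proving $\bw = |\bw|$ is positive real.

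For strict minimality, I would reuse the same machinery. Suppose $\bw' \in \mV$ has the same coordinate-wise modulus as $\bw$, so $|\bw'| = |\bw|$. Then $S(\bw') = 1$ and $S(|\bw'|) = S(|\bw|) = 1$ by the previous step, so the triangle-inequality argument again yields $(\bw')^{\bn} = |\bw'|^{\bn}$ for every $\bn$ in the support of $S$, and aperiodicity forces $\bw' = |\bw'| = |\bw| = \bw$.

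The only genuinely delicate point is the continuity/IVT step, which requires that $S$ actually converges on the segment from $\zero$ to $|\bw|$. This follows from $\bw$ being within the domain of convergence of the power series (since the domain of absolute convergence of a power series with non-negative coefficients is logarithmically convex and in particular contains the coordinate-wise modulus image of any point where the series converges absolutely, together with the real segment back to the origin). Once that is verified, the rest of the argument is a clean combination of the triangle inequality with the arithmetic content of aperiodicity.
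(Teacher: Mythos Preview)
Your proof is correct and follows the same approach as the paper: start from the triangle inequality $1 = |S(\bw)| \le S(|\bw|)$, use minimality together with the intermediate value theorem to force equality, and then invoke aperiodicity on the equality case to conclude that $\bw$ is positive real and strictly minimal. In fact your write-up is more complete than the paper's, which records only the triangle-inequality step and leaves the remaining steps (the IVT argument and the use of aperiodicity) to the reader.
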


\begin{proof}
Suppose that $\bw$ is a minimal point and for each $1 \leq j \leq d$ write $w_j = x_je^{i\theta_j}$ with $x_j > 0$ and $\theta_j \in \mathbb{R}$. Let $s_{\bn}$ denote the coefficient of $z^{\bn}$ in $S(\bz)$. Then
\begin{align*}
1 = |S(\bw)| = \left|\sum_{\bn \in \mathbb{N}^d}s_{\bn}\bx^{\bn}e^{i(\bn\theta)}\right| \leq \sum_{\bn \in \mathbb{N}^d}s_{\bn}\bx^{\bn}
\end{align*}
since $\bw$ is within the domain of convergence of $G$ and $H$ and hence $|S(\bw)| \leq S(|w_1|, \ldots, |w_d|)$.
\end{proof}

\section{A Family of Limit Theorems}
\label{sec:limitExs}

In this section, and the next one, we illustrate how to apply the approach of Figure~\ref{fig:schema2} to prove LCLTs. We begin with the example that originally motivated our work.

\begin{defn}
For any $d \in \N$, let $\mF_d(n)$ be the set of permutations $\sigma$ on $\{1,\dots,n\}$ such that $i - d \leq \sigma(i) \leq i + 1$ for all $i$. Let $\mathcal{F}_d$ be the union of sets $\mF_d(n)$ for all $n \in \mathbb{N}$. 
\end{defn}

Note that every element of $\mF_d$, when written in disjoint cycle notation, has cycles of length at most $d+1$.

\begin{proposition}[{Chung et al. \cite[Theorem 1]{ChungDiaconisGraham2021}}]
\label{prop:ratGF}
The number of permutations in $\mF_d(n)$ with $i_k$ cycles of length $k$ equals the coefficient $[z^{i_1}_1\dotsb z^{i_{d+1}}_{d+1}t^n]F(\bz, t)$ in the power series expansion of the rational function
\begin{align*}
    F(\bz,t) = F(z_1,\dots,z_{d+1}, t) = \frac{1}{1-z_1t-z_2t^2-\cdots-z_{d+1}t^{d+1}}.
\end{align*}
\end{proposition}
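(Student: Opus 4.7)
The plan is to identify the coefficient $[z_1^{i_1} \cdots z_{d+1}^{i_{d+1}} t^n] F(\bz, t)$ as a multinomial count of integer compositions, and then exhibit a bijection between those compositions and elements of $\mathcal{F}_d(n)$ that preserves cycle-length statistics. Expanding $F$ as a geometric series and applying the multinomial theorem gives
\[ [z_1^{i_1} \cdots z_{d+1}^{i_{d+1}} t^n] F(\bz, t) = \binom{i_1 + \cdots + i_{d+1}}{i_1, \ldots, i_{d+1}} \]
whenever $\sum_{k} k\, i_k = n$ (and $0$ otherwise), which is exactly the number of compositions of $n$ into parts from $\{1,\ldots,d+1\}$ with $i_k$ parts of size $k$.

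I would next define a map $\Psi$ from such compositions to $\mathcal{F}_d(n)$: given $(c_1,\ldots,c_\ell)$, set $b_j := c_1+\cdots+c_{j-1}$ and let $\Psi(c_1,\ldots,c_\ell)$ be the permutation whose disjoint cycles are the $\ell$ cyclic shifts $(b_j+1,\,b_j+2,\,\ldots,\,b_j+c_j)$ for $j=1,\ldots,\ell$. Each cycle increments by $+1$ except at its maximum, where the descent equals $-(c_j-1) \geq -d$ since $c_j \leq d+1$; hence $\Psi$ lands in $\mathcal{F}_d(n)$, and the cycle-length profile of $\Psi(c_1,\ldots,c_\ell)$ is precisely $(i_1,\ldots,i_{d+1})$.

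The main step is to show that every $\sigma \in \mathcal{F}_d(n)$ arises from a unique composition under $\Psi$, which I would prove by induction on $n$ by analyzing the cycle $C$ containing $1$. Since $\sigma(1) \in \{1,2\}$, either $1$ is a fixed point (and we recurse on $\{2,\ldots,n\}$) or $\sigma(1) = 2$. In the latter case, if $\sigma(j) = j+1$ for $1 \leq j < m$, then $\sigma(m) \leq m+1$ and injectivity precludes $\sigma(m) \in \{2,\ldots,m\}$ (these values are already in the image of $\{1,\ldots,m-1\}$), so $\sigma(m) \in \{1,\,m+1\}$. The chain must eventually close with $\sigma(m) = 1$ (at the latest when $m = n$, since $\sigma(n) \leq n+1$ but $n+1$ is not in the domain), and the constraint $\sigma(m) \geq m-d$ then forces $m \leq d+1$. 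Thus $C = (1, 2, \ldots, m)$, and the restriction of $\sigma$ to $\{m+1, \ldots, n\}$ lies, after shifting indices down by $m$, in $\mathcal{F}_d(n - m)$, so induction completes the argument.

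Combining these pieces, the number of permutations in $\mathcal{F}_d(n)$ with $i_k$ cycles of length $k$ equals the number of compositions with $i_k$ parts of size $k$, matching the generating-function coefficient. The main obstacle is the cycle-decomposition step in the induction above — verifying that every $\sigma \in \mathcal{F}_d(n)$ breaks into contiguous cyclic-shift blocks on intervals of consecutive integers — but once this structural fact is in hand, the proposition reduces to a direct comparison against the geometric-series expansion of $F$.
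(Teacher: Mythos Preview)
Your argument is correct. The structural claim --- that every $\sigma\in\mF_d(n)$ decomposes into cycles on consecutive blocks $\{b_j+1,\dots,b_j+c_j\}$ with $c_j\le d+1$ --- is exactly what is needed, and your inductive analysis of the cycle through $1$ establishes it cleanly: the constraint $\sigma(m)\le m+1$ together with injectivity forces the chain $1\mapsto 2\mapsto\cdots\mapsto m\mapsto 1$, and $\sigma(m)=1\ge m-d$ bounds the block length. The identification of the coefficient with a multinomial count of compositions is routine.

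The paper does not give its own proof but attributes the result to Chung et al., who argue via perfect matchings of the bipartite graph encoding the constraints $i-d\le\sigma(i)\le i+1$ and derive an explicit recurrence for the multivariate generating function. Your route is more direct: you bypass the graph-theoretic encoding and the recurrence manipulation entirely, going straight to a bijection with compositions. What your approach buys is transparency --- the cycle structure of $\mF_d(n)$ is made completely explicit --- at the cost of being specific to this family; the perfect-matching/transfer-matrix viewpoint in Chung et al.\ is designed to generalize to other banded permutation classes where the cycle structure is less clean. Both arrive at the same recursion (peel off the first block of length $c_1\in\{1,\dots,d+1\}$), just packaged differently.
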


Chung et al. \cite{ChungDiaconisGraham2021} prove Proposition 2 by considering the set of perfect matchings of the graph associated with $\mF_d$ and using this to find an explicit recurrence that can be manipulated. One of the motivations for their study of this family is a relationship to the determination of sample sizes required for sequential importance sampling of certain random perfect matchings in classes of bipartite graphs.

\begin{conjecture}[{Chung et al. \cite[Page 45]{ChungDiaconisGraham2021}}]
\label{conj:LCLT}
For fixed $d$ the joint limiting distribution of the number of $k$-cycles approaches a multivariate normal distribution as $n\rightarrow\infty$.
\end{conjecture}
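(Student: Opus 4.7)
The plan is to follow the schema in Figure~\ref{fig:schema2} after first reducing the problem to account for the fact that the coefficients of $F$ are supported on the hyperplane $\{\bi : \sum_{j=1}^{d+1} j \cdot i_j = n\}$ (every permutation of $\{1,\ldots,n\}$ uses exactly $n$ elements across its cycles). Specifically, I would substitute $z_1 = 1$ into the generating function from Proposition~\ref{prop:ratGF} to obtain
\[ \tilde F(z_2, \ldots, z_{d+1}, t) = \frac{1}{1 - t - z_2 t^2 - \cdots - z_{d+1} t^{d+1}}, \]
which tracks the $d$-dimensional vector $(I_2, \ldots, I_{d+1})$, from which $I_1 = n - \sum_{j \geq 2} j \cdot I_j$ is determined. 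This reduction is essential: without it, the original $H$ admits a one-parameter family of critical points of the same coordinate-wise modulus as $(\bone, \rho)$, obtained via the symmetry $(z_j, t) \mapsto (z_j e^{-ij\phi}, t e^{i\phi})$ coming from the support constraint, so strict minimality would fail.

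For Step 1 of the schema, I would note that $\tilde H(\bone, t) = 1 - t - t^2 - \cdots - t^{d+1}$ is strictly decreasing on $[0, 1]$ from $1$ to $-d$, so it has a unique root $\rho \in (0, 1)$, and the direction $\bm$ is read off from~\eqref{eq:crit}. For Step 2 (minimality via Lemma~\ref{lemma:minimal}), the inequality
\[ \tilde H(s, \ldots, s, s\rho) = 1 - s\rho - \sum_{j=2}^{d+1} s^{j+1} \rho^j > 1 - \sum_{j=1}^{d+1} \rho^j = 0 \]
on $s \in (0,1)$ is immediate from $s < 1$. For Step 3, I would parametrize any candidate critical point of the same coordinate-wise modulus as $(e^{i\theta_2}, \ldots, e^{i\theta_{d+1}}, \rho e^{i\phi})$. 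The critical equations force $w_j s^j / r_j$ to be independent of $j$; chasing this common value through the relation $K_{\text{new}} = s/(1 - \sum_j j r_j)$ and substituting into $\tilde H(\bw, s) = 0$ should collapse the system to the single scalar equation $e^{i\phi} = 1$, forcing $\phi = 0$ and hence $\bw = \bone$.

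The main obstacle is Step 4: showing the $d \times d$ phase Hessian $\mH$ of $\tilde H$ at $(\bone, \rho)$ in direction $(\bm, 1)$ is non-singular, and (ideally) computing its determinant symbolically to make the limiting covariance explicit. Expanding~\eqref{eq:Hess} yields a dense matrix whose entries are rational in $\rho$, and no obvious row operation produces a clean closed form. Following the strategy previewed in the introduction, I would compute $\mH$ for small values of $d$, conjecture an explicit $LU$-factorization matching those observations, and then verify the factorization holds for all $d$ by checking the product identity entry-by-entry; $\det \mH$ then reads off as the product of diagonal entries of $U$, and the limiting covariance is $\mH^{-1}$. Step 5 is immediate since $G \equiv 1$ and $\tilde H_t(\bone, \rho) = -\sum_{k=1}^{d+1} k \rho^{k-1} < 0$ are both nonzero, so Proposition~\ref{prop:LCLT2} delivers the LCLT for $(I_2, \ldots, I_{d+1})$, which implies the joint CLT claimed in Conjecture~\ref{conj:LCLT} (the behaviour of $I_1$ is governed by a linear combination of the others, so its marginal and joint CLTs follow from Cramér-Wold).
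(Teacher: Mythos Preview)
Your plan matches the paper's almost exactly: reduce by setting $z_1=1$, then run the schema of Figure~\ref{fig:schema2} and handle the phase Hessian via a conjectured-then-verified $LU$-factorization. The paper carries this out as a special case of the more general Theorem~\ref{thm:LCLTgen}, but the specialization to $\tilde H = 1 - t - z_2 t^2 - \cdots - z_{d+1} t^{d+1}$ is precisely your $\tilde F$.

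The one place you diverge is Step~3. Your proposed direct argument (parametrizing torus points and ``chasing the common value $K_{\text{new}}$ through $\tilde H(\bw,s)=0$ until only $e^{i\phi}=1$ remains'') is not wrong in spirit but is underspecified; as written it is not clear the system actually collapses, and the relation you quote for $K_{\text{new}}$ does not obviously arise from the critical equations. The paper avoids this computation entirely by invoking Proposition~\ref{prop:strictmin}: since $S(\bz,t)=t+z_2t^2+\cdots+z_{d+1}t^{d+1}$ has non-negative coefficients and the exponent vectors $(0,\ldots,0,1),(1,0,\ldots,0,2),\ldots$ span $\Z^{d+1}$, the series is aperiodic, so \emph{every} minimal point in the domain of convergence is strictly minimal with positive real coordinates. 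That immediately rules out any other singularity, hence any other critical point, with the same coordinate-wise modulus as $(\bone,\rho)$. This is both shorter and more robust than the direct torus computation, and it is worth internalizing as the default tool for this step whenever $H=1-S$ with $S$ combinatorial.

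Everything else in your outline (the support-hyperplane reduction, the minimality check via Lemma~\ref{lemma:minimal}, the $LU$ strategy for $\det\mH$, and the trivial Step~5) lines up with the paper.
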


Experimentally checking the properties of Proposition~\ref{prop:LCLT2} in low dimension using a computer algebra system, we were initially surprised to find that some of the properties did not hold! To better understand the behaviour of the coefficients we thus plotted the coefficients of $[t^{150}]F(\bz,t)$ for $d=1$, shown on the left of Figure~\ref{fig:plots}.

\begin{figure}[h!]
\centering
\includegraphics[scale=0.42]{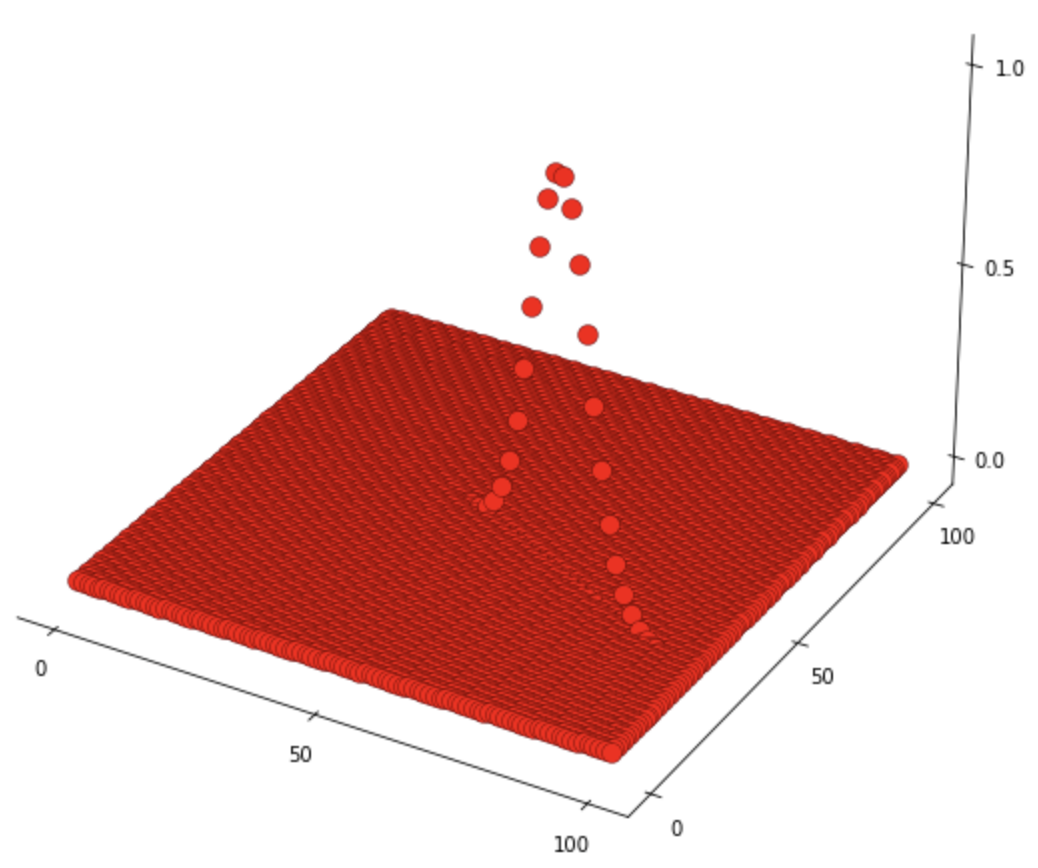}
\includegraphics[scale=0.42]{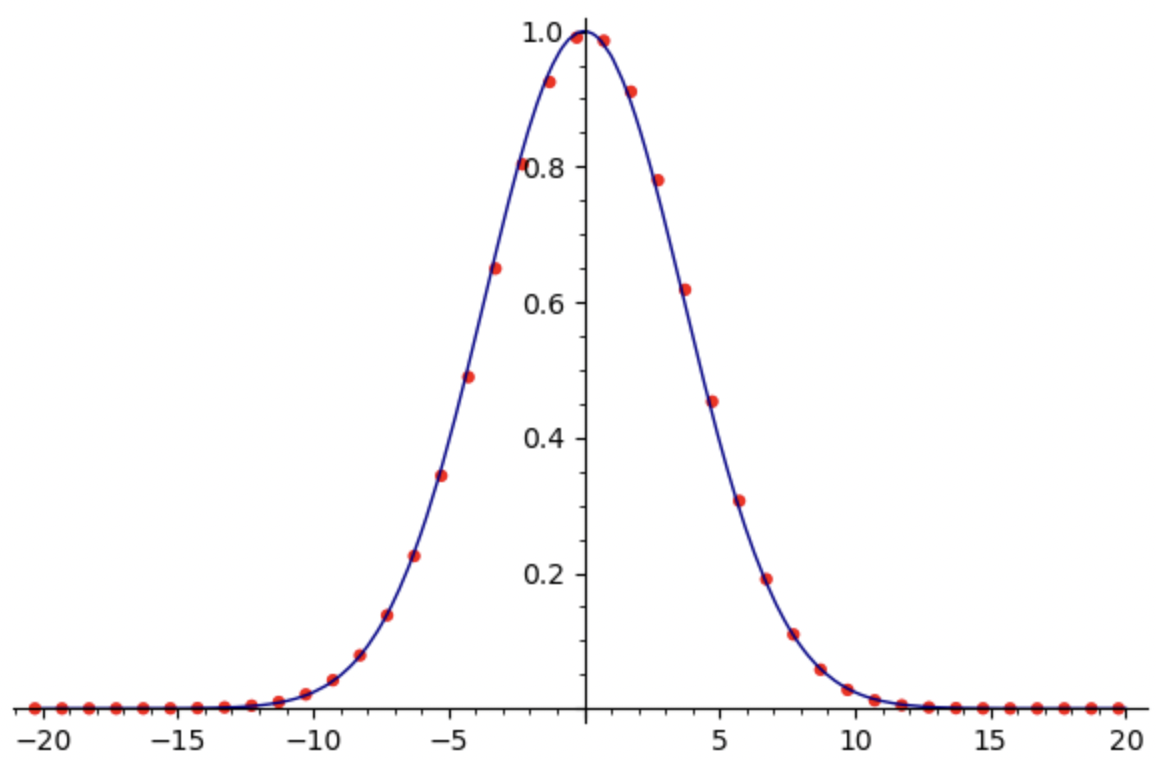}
\caption{\emph{Left:} The coefficients of $[t^{150}]F(\bz,t)$ when $d=1$, divided by the maximum coefficient and shifted to put the maximum at the origin. \emph{Right:} The coefficients of $[t^{150}]F(1,z_2,t)$ compared to their limiting normal distribution and shifted to put the maximum at the origin.}
\label{fig:plots}
\end{figure}

Figure~\ref{fig:plots} shows the problem establishing a limit theorem on the coefficients of $F(\bz,t)$: the coefficients approach a normal distribution, but the distribution is supported on a $d$-dimensional slice of $\R^{d+1}$. Indeed, if the size $n$ of one of our restricted permutations is fixed, and the number $i_k$ of $k$ cycles it contains is specified for all $k \geq 2$, then its number of one cycles (i.e., fixed points) is uniquely determined as $i_1 = n - 2i_2 - \cdots - (d+1)i_{d+1}$. We thus prove Conjecture~\ref{conj:LCLT} true by setting $z_1=1$ and applying the techniques of ACSV to the coefficients of $F(1,z_2,\dots,z_d,t)$. 

\begin{theorem}
\label{thm:LCLT}
Let $h(t) = 1 - t - \cdots - t^{d+1}$ and let $\rho>0$ be the smallest positive root of $h(t)$. As $n\rightarrow\infty$, the maximum coefficient of $[t^n]F(\bz, t)$ as a polynomial in $z_2,\dots,z_{d+1}$ approaches
\begin{align*}
  A_n =  \frac{\rho^{-n} n^{-d/2}}{-\rho h'(\rho)(2\pi)^{d/2}} \sqrt{\frac{(1+2\rho+\dots+(d+1)\rho^d)^{d+2}}{(1+\rho+\dots+\rho^d)\rho^{\frac{d(d+1)}{2}}}}.
\end{align*}
Furthermore, 
\begin{align}
\label{eq:LCLT}
     \sup_{s_2,\dots,s_{d+1} \in \mathbb{N}}\left|\frac{[z^{s_2}_2\cdots z^{s_{d+1}}_{d+1}t^n]F(1, z_2,\dots,z_{d+1},t)}{A_n} - v_n(s_2,\dots,s_{d+1})\right| \rightarrow 0
\end{align}
where 
\begin{itemize}
\item $v_n(\bss) = \exp\left[-\frac{(\bss-n\bm)\mH^{-1}(\bss-n\bm)^T}{2n}\right]$,
\item $\bm = \left(-\frac{\rho^2}{h'(\rho)}, \ldots, -\frac{\rho^{d+1}}{h'(\rho)}\right)$,
\item and $\mH$ is the non-singular matrix with entries
\begin{equation*} 
\mH_{i,j} = \begin{cases}
    \frac{\rho^{i+j+1}h''(\rho)-\rho^{i+j}(1+i+j)h'(\rho)}{h'(\rho)^3} & i \neq j \\[+2mm]
    \frac{\rho^{i+j+1}h''(\rho)-\rho^{i+j}(1+i+j)h'(\rho)-\rho^ih'(\rho)^2}{h'(\rho)^3} & i = j
    \end{cases} \;.
\end{equation*}
\end{itemize}
\end{theorem}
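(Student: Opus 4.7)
The plan is to apply Proposition~\ref{prop:LCLT2} to the specialization $F(1, z_2, \ldots, z_{d+1}, t)$, following the schema in Figure~\ref{fig:schema2}. Setting $z_1 = 1$ reduces the denominator to $H(\bz, t) = 1 - t - z_2 t^2 - \cdots - z_{d+1} t^{d+1}$ (with $\bz = (z_2, \ldots, z_{d+1})$), the numerator is the constant $G \equiv 1$, and the natural candidate for a minimal critical point is $(\bone, \rho)$, where $\rho$ is the smallest positive root of $h(t) = H(\bone, t)$.

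For step~1 of the schema, I would first observe that $h'(t) = -1 - 2t - \cdots - (d+1)t^d$ is strictly negative on $[0, \infty)$, so $h$ is strictly decreasing; combined with $h(0) = 1$ and $h(1) = -d$, this gives a unique simple root $\rho \in (0, 1)$. The direction $\br = (\bm, 1)$ making $(\bone, \rho)$ a critical point then follows from~\eqref{eq:crit} using $H_{z_k}(\bone, \rho) = -\rho^k$ and $H_t(\bone, \rho) = h'(\rho)$. For steps~2 and~3, Lemma~\ref{lemma:minimal} gives minimality via the direct bound
\[
H(s, \ldots, s, s\rho) = 1 - s\rho - \sum_{k=2}^{d+1} s^{k+1}\rho^k > 1 - \rho - \sum_{k=2}^{d+1} \rho^k = h(\rho) = 0 \quad (0 < s < 1),
\]
since each subtracted term is strictly smaller than its value at $s = 1$; Proposition~\ref{prop:strictmin} then upgrades this to strict minimality after one verifies aperiodicity of $S := 1 - H$, which is immediate because the exponent vector of the lone monomial $t$ together with the exponents of $z_k t^k$ span $\Z^{d+1}$ over the integers.

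The main work is concentrated in steps~4--5. Using Definition~\ref{def:phaseHessian}, the linearity of $H$ in each $z_k$ forces $H_{z_i z_j}(\bone, \rho) = 0$, so the Hessian entries collapse to polynomial expressions in $\rho$, $h'(\rho)$, and $h''(\rho)$; substituting $H_{z_i t}(\bone, \rho) = -i\rho^{i-1}$ and $H_{tt}(\bone, \rho) = h''(\rho)$ into the template~\eqref{eq:Hess} yields the stated formulas for $\mH_{ij}$. The main obstacle is then evaluating $\det \mH$ symbolically in the parameter $d$---this is the \emph{interesting symbolic determinant} advertised in the abstract, addressed in Section~\ref{sec:GenForm} by guessing and then verifying an $LU$-factorization whose diagonal produces a clean product formula. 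Once the determinant is in hand, its nonvanishing is immediate (every factor is a positive power of $\rho$ or of $-h'(\rho) > 0$), the remaining conditions $G(\bone, \rho) = 1 \neq 0$ and $H_t(\bone, \rho) = h'(\rho) \neq 0$ are trivial (the latter because $\rho$ is a simple root of $h$), and Proposition~\ref{prop:LCLT2} delivers~\eqref{eq:LCLT} with amplitude $-G(\bone, \rho)/(\rho H_t(\bone, \rho)) \cdot (2\pi n)^{-d/2}/\sqrt{\det \mH}$; simplifying via the identity $\rho(1 + \rho + \cdots + \rho^d) = 1$ (from $h(\rho) = 0$) rewrites this in the form of the stated $A_n$.
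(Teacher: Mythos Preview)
Your proposal is correct and follows essentially the same approach as the paper: both apply Proposition~\ref{prop:LCLT2} via the schema of Figure~\ref{fig:schema2}, using Lemma~\ref{lemma:minimal} for minimality, Proposition~\ref{prop:strictmin} for aperiodicity/strict minimality, a direct computation of the phase Hessian entries from~\eqref{eq:Hess}, and the $LU$-factorization of Section~\ref{sec:GenForm} for the determinant. The only organizational difference is that the paper carries out these steps for the more general denominator $H(\bz,t)=1-q(t)-\sum_k q_k(t)z_k$ of Theorem~\ref{thm:LCLTgen} and then specializes, whereas you work directly with $q(t)=t$ and $q_k(t)=t^k$; this makes your nonvanishing step genuinely immediate, while the paper needs the Cauchy--Schwarz-type inequality in Step~3c to handle the general $q_k$.
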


Theorem~\ref{thm:LCLT} follows directly from Proposition~\ref{prop:LCLT2} after some direct calculations, the most difficult of which is computing a closed form for the determinant of the Hessian matrix $\mathcal{H}$. We complete these calculations in Section~\ref{sec:GenForm}, using a guess-and-check method for symbolic determinants which is a useful tool for establishing limit theorems such as these with parameterized dimension. In fact, we will prove the following more general result.

\begin{theorem}[Main Theorem]
\label{thm:LCLTgen}
Let $F(\bz, t) = \frac{G(\bz, t)}{H(\bz,t)}$ be a ratio of functions where
$$H(\bz, t) = 1 - q(t) - \sum_{k=1}^dq_k(t)z_k,$$
let $P(t) = H(\bone, t)$, and let $\rho$ be the smallest positive root of $P(t)$. Suppose that
\begin{itemize}
\item each of the $q_i(t)$ is a non-zero polynomial vanishing at the origin and $q(t)$ is a complex-valued analytic function for $|t| \leq \rho$ that vanishes at the origin,
\item the power series expansion of $S(\bz,t) = 1-H(\bz, t)=q(t) + \sum_k q_k(t)z_k$ at the origin has non-negative coefficients, 
\item the exponents appearing in the power series $q(t)$ have greatest common divisor $1$, and
\item $G(\bone, \rho)$ is non-zero.
\end{itemize}
As $n \rightarrow \infty$, the maximum coefficient of $[t^n]F(\bz, t)$ as a polynomial in $z_1, \ldots, z_d$ approaches
\[
A_n = \rho^{-n}n^{-d/2} \frac{G(\bone, \rho)}{-\rho P'(\rho)(2\pi)^{d/2}\sqrt{\det \mH}},
\]
where $\mH$ is the non-singular $d \times d$ matrix  
\begin{equation} 
\mH_{i,j} = \begin{cases}
    \displaystyle \frac{\rho q_i(\rho)q_j(\rho)P''(\rho)-(q_j(\rho)q'_i(\rho)\rho+q_i(\rho)q'_j(\rho)\rho-q_i(\rho)q_j(\rho))P'(\rho)}{\rho^2P'(\rho)^3} & i \neq j \\[+4mm]
    \displaystyle \frac{\rho q_j(\rho)^2P''(\rho)-(2q_j(\rho)q'_j(\rho)\rho-q_j(\rho)^2)P'(\rho) - q_j(\rho)\rho P'(\rho)^2}{\rho^2P'(\rho)^3} & i = j
    \end{cases} \;
\label{eq:Hgen}
\end{equation}
whose determinant
\begin{equation}
\det \mH 
= \frac{(-1)^d\left(\prod_{k=1}^{d}q_k(\rho)\right)\left[(q(\rho)-1)\left(\rho P''(\rho) + P'(\rho)+\rho\sum_{k=1}^{d}\frac{q'_k(\rho)^2}{q_k(\rho)}\right) + q'(\rho)^2\rho\right]}{P'(\rho)^{d+2}\rho^{d+1}}
\label{eq:Hgendet}
\end{equation}
is non-zero. Furthermore, 
\[
\sup_{s_1,\ldots, s_d \in \N} \left|\frac{[z_1^{s_1}\cdots z_d^{s_d}t^n]F(z_1, \ldots, z_d, t)}{A_n} - v_n(s_1, \ldots, s_d)\right| \rightarrow 0
\]
where
$$ v_n(\bs) = \exp\left[-\frac{(\bs-n\bm)\mH^{-1}(\bs-n\bm)^T}{2n}\right]
\quad\text{ for } \quad
\bm = \left(\frac{-q_1(\rho)}{\rho P'(\rho)}, \ldots, \frac{-q_d(\rho)}{\rho P'(\rho)}\right).$$
\end{theorem}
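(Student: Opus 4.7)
The strategy is to verify the hypotheses of Proposition~\ref{prop:LCLT2} by running through the schema of Figure~\ref{fig:schema2} for $F$ at the point $(\bone,\rho)$ in direction $(\bm,1)$.

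Steps~1 and~2 (critical point and line minimality) are direct. Computing $H_{z_i}(\bone,\rho) = -q_i(\rho)$ and $H_t(\bone,\rho) = -q'(\rho) - \sum_k q'_k(\rho) = P'(\rho)$, the critical point system~\eqref{eq:crit} at $(\bone,\rho)$ in direction $(\bm,1)$ reduces to $m_i = -q_i(\rho)/(\rho P'(\rho))$, matching the stated $\bm$. Since $S(\bone,t) = 1 - P(t)$ has non-negative coefficients, $P$ is strictly decreasing on $[0,\rho]$ and $\rho$ is a simple root, so $P'(\rho) \neq 0$. For minimality along the segment, each non-zero polynomial $q_k$ with non-negative coefficients satisfies $q_k(s\rho) < q_k(\rho)$ for $s \in (0,1)$, so
\begin{equation*}
S(s\bone,\, s\rho) = q(s\rho) + s\sum_k q_k(s\rho) < q(\rho) + \sum_k q_k(\rho) = 1,
\end{equation*}
hence $H(s\bone,s\rho) > 0$ and Lemma~\ref{lemma:minimal} yields minimality.

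Step~3 (strict minimality) follows from Proposition~\ref{prop:strictmin} after checking that $S$ is aperiodic. The hypothesis that the exponents of $q$ have greatest common divisor $1$ means they generate $\Z$ in the last coordinate, so $(0,1)$ is an integer combination of exponents of the pure-$t$ part of $S$. Each $q_k$ is non-zero and vanishes at the origin, so some $(e_k, j_k)$ appears in the support of $S$, and then $(e_k,0) = (e_k, j_k) - j_k(0,1)$ is also an integer combination. Since $(0,1)$ together with the $(e_k,0)$ form a basis of $\Z^{d+1}$, aperiodicity follows, and Proposition~\ref{prop:strictmin} forces strict minimality of $(\bone,\rho)$; in particular no other critical point has the same coordinate-wise modulus.

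Steps~4 and~5 are algebraic. Substituting $H_{z_iz_j}(\bone,\rho)=0$, $H_{z_it}(\bone,\rho)=-q'_i(\rho)$, and $H_{tt}(\bone,\rho)=P''(\rho)$ into Definition~\ref{def:phaseHessian} produces~\eqref{eq:Hgen} after routine simplification. The main obstacle is the closed form~\eqref{eq:Hgendet} of $\det\mH$, which must hold uniformly in the parameter dimension $d$. One approach is to exploit a rank-$2$-plus-diagonal structure: setting $u_i = q_i(\rho)$ and $v_i = q'_i(\rho)$, the entries in~\eqref{eq:Hgen} assemble into $\mH = D + MKM^T$, where $D$ is the diagonal matrix with $D_{ii} = -q_i(\rho)/(\rho P'(\rho))$, $M = [u \mid v]$ is $d \times 2$, and $K$ is a fixed symmetric $2 \times 2$ matrix depending only on $P'(\rho)$ and $P''(\rho)$; the matrix determinant lemma then collapses $\det\mH$ to a $2 \times 2$ determinant involving $\det D$, $\sum u_i$, $\sum v_i$, and $\sum v_i^2/u_i$. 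Using $P(\rho) = 0$ to substitute $\sum q_k(\rho) = 1 - q(\rho)$ and $\sum q'_k(\rho) = -P'(\rho) - q'(\rho)$ reduces the expression to~\eqref{eq:Hgendet}. The authors' alternative in Section~\ref{sec:GenForm} achieves the same identity via a guess-and-check $LU$ factorization of $\mH$. The remaining hypotheses of Proposition~\ref{prop:LCLT2}---namely $G(\bone,\rho) \neq 0$ (assumed), $H_t(\bone,\rho) = P'(\rho) \neq 0$ (Step~1), and non-singularity of $\mH$ (extracted from~\eqref{eq:Hgendet} under the stated conditions)---then combine to yield the LCLT, with the formula for $A_n$ being the peak value $v_n(n\bm) = 1$ read off at $\bs = n\bm$.
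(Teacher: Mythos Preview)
Your plan tracks the paper's schema (Figure~\ref{fig:schema2}) for Steps~1--3, with only cosmetic differences in the minimality argument. The substantive divergence is in computing $\det\mH$: you exploit the diagonal-plus-rank-two structure $\mH = D + MKM^T$ and apply the matrix determinant lemma to collapse everything to a $2\times 2$ determinant in $\sum_k q_k(\rho)$, $\sum_k q'_k(\rho)$, and $\sum_k q'_k(\rho)^2/q_k(\rho)$, whereas the paper (Section~\ref{sec:GenForm}) conjectures explicit entries for an $LU$-factorization $\mH U = L$ from low-dimensional computer experiments and then verifies the identity by a three-case symbolic computation in Sage. Your route is shorter, more conceptual, and avoids both the guessing stage and the computer-algebra verification; the paper's route is more laborious but yields the full triangular factorization of $\mH$ as a byproduct and showcases the guess-and-check $LU$ technique of Krattenthaler--Andrews as a reusable tool for such Hessians.

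There is, however, one genuine gap. You assert that non-singularity of $\mH$ is ``extracted from~\eqref{eq:Hgendet} under the stated conditions,'' but it is not immediate that the bracketed factor in~\eqref{eq:Hgendet} is non-zero. The paper devotes a separate argument (its Step~3c) to this: since $q(\rho)-1<0$ and $q'(\rho)^2\rho>0$, it suffices to show
\[
\rho P''(\rho)+P'(\rho)+\rho\sum_{k=1}^{d}\frac{q'_k(\rho)^2}{q_k(\rho)}\;\le\; 0,
\]
which, after expanding $P=1-q-\sum_k q_k$, reduces to the termwise inequality $\rho\,q'_k(\rho)^2/q_k(\rho)\le \rho\,q''_k(\rho)+q'_k(\rho)$. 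That in turn follows from the Cauchy--Schwarz-type bound $zf'(z)^2\le f(z)\bigl(zf''(z)+f'(z)\bigr)$, valid for any non-zero polynomial $f$ with non-negative coefficients vanishing at the origin and any $z>0$. Your plan needs this argument (or an equivalent one) to be complete; nothing in the rank-two decomposition supplies it.
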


\begin{rem}
The Sage package accompanying this article automatically proves central limit theorems for all explicit rational functions satisfying the conditions of Proposition~\ref{prop:LCLT2}, which includes all those satisfying the conditions of Theorem~\ref{thm:LCLTgen}. 
\end{rem} 

Although the form of $H$ in Theorem~\ref{thm:LCLTgen} might seem restrictive, generating functions of this form appear quite frequently when tracking parameters in combinatorial classes using, for instance, the \emph{symbolic method framework} described in Flajolet and Sedgewick~\cite{FlajoletSedgewick2009}. We end this section by describing some other combinatorial limit theorems it captures.

\subsubsection*{Strings with Tracked Letters}
Let $\mA=\{a_1,\dots,a_\ell\}$ be an alphabet with $\ell$ letters and let $\Omega = \{\omega_1,\dots,\omega_d\} \subset \mA$ be a subset of $d$ letters we wish to track. The multivariate generating function enumerating such strings is 
$$F(\bz,t) = \frac{1}{1-(z_1 + z_2 + \ldots + z_d)t-(\ell-d)t},$$
which trivially satisfies the hypotheses of Theorem~\ref{thm:LCLTgen} when $\ell > d$ (if $\ell=d$ then all coefficients live in a $d$-dimensional hyperplane of $\mathbb{R}^{d+1}$, because adding the number of occurrences of each letter gives the length of the string). Thus, as $n \rightarrow \infty$, the maximum coefficient of $[t^n]F(\bz, t)$ as a polynomial in $z_1, \ldots, z_d$ approaches
\[
A_n = \ell^{n}n^{-d/2} \frac{1}{ (2\pi)^{d/2}\sqrt{\frac{\ell - d}{\ell^{d+1}}}}
\]
and
\[
\sup_{s_1,\ldots, s_d \in \N} \left|\frac{[z_1^{s_1}\cdots z_d^{s_d}t^n]F(z_1, \ldots, z_d, t)}{A_n} - \exp\left[-\frac{(\bss-n\bm)\mH^{-1}(\bss-n\bm)^T}{2n}\right]\right| \rightarrow 0,
\]
where $\bm = \left(\frac{1}{\ell}, \ldots, \frac{1}{\ell}\right)$ and $\mH$ is the non-singular $d \times d$ matrix with off-diagonal entries $-\frac{1}{\ell^2}$ and diagonal entries $\frac{\ell - 1}{\ell^2}$. Note that tracking the number of $1$s in binary strings (where $\ell=2$ and $d=1$) recovers the classical central limit theorem,
$$ \binom{n}{s} = [z^st^n]\frac{1}{1-(1+z)t} \approx 2^n \sqrt{\frac{2}{\pi n}} e^{-(n-2s)^2/2n}. $$

\subsubsection*{Compositions with Tracked Summands}
\label{sec:Comps}

Fix a positive integer $d$ and recall that an \emph{(integer) composition} of size $n$ is an ordered tuple of positive integers which sum to $n$. If $\CC$ is the class of compositions, enumerated by size and the number of times each element of $\{1,\dots,d\}$ occurs, then $\CC$ has the multivariate generating function
$$\CC(\bz, t) = \frac{1}{1-S(\bz,t)} = \frac{1}{1-z_1t - z_2t^2 - \cdots - z_dt^d - \frac{t^{d+1}}{1-t}},$$ 
where
\begin{align*}
S(\bz, t) 
&= z_1t + z_2t^2 + \cdots + z_dt^d + \sum_{k \geq d+1} t^k \\[+2mm]
&= z_1t + z_2t^2 + \cdots + z_dt^d + \frac{t^{d+1}}{1-t}
\end{align*}
is the multivariate generating function of positive integers where $z_k$ tracks the number of occurrences of $k$. Once again, it is easy to verify the hypotheses of Theorem~\ref{thm:LCLTgen} hold with $\rho=1/2$, the smallest positive root of $P(t) = H(\bone, t) = 1 - \frac{t}{1-t}$. Figure~\ref{fig:compplots} illustrates the corresponding local central limit theorem when $d = 1$ and $d = 2$.

\begin{theorem}
\label{thm:LCLTcomp}
As $n\rightarrow\infty$ the maximum coefficient of $[t^n]C(\bz, t)$ as a polynomial in $z_1,\dots,z_d$ approaches
\begin{align*}
  A_n = 2^n n^{-d/2}\frac{2^{\frac{d^2}{4} + \frac{7d}{4} + 1}}{2(2\pi)^{d/2} \sqrt{(d^2 + 4d + 6)2^d - 2}},
\end{align*}
and
\begin{align*}
     \sup_{s_1,\dots,s_{d} \in \mathbb{N}}\left|\frac{[z^{s_1}_1\cdots z^{s_{d}}_{d}t^n]\CC(z_1,\dots,z_d,t)}{A_n} - \exp\left[-\frac{(\bss-n\bm)\mH^{-1}(\bss-n\bm)^T}{2n}\right]\right| \rightarrow 0
\end{align*}
where $\bm = \left(\frac{1}{4}, \frac{1}{8}, \ldots, \frac{1}{2^{d+1}}\right)$ and $\mH$ is the $d \times d$ matrix with off-diagonal entries $\mH_{i,j} = -2^{-i - j - 2}(i + j - 3)$ and diagonal entries $\mH_{j,j} = 2^{-2(j+1)}(2^{j+1}-2j+3)$.
\end{theorem}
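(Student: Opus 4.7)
The plan is to apply Theorem~\ref{thm:LCLTgen} directly to the generating function $\CC(\bz,t)$. Reading off the template with $G \equiv 1$, $q_k(t) = t^k$ for $1 \leq k \leq d$, and $q(t) = t^{d+1}/(1-t)$, the hypotheses are immediate: each $q_k$ is a nonzero polynomial vanishing at the origin, $q(t)$ is analytic on $|t|<1$ and vanishes at the origin, the coefficients of $S(\bz,t) = q(t) + \sum_k q_k(t)z_k$ are manifestly non-negative, the exponents $\{d+1, d+2, \ldots\}$ appearing in $q$ have $\gcd = 1$, and $G(\bone,\rho) = 1 \neq 0$.

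To locate $\rho$, I would compute $P(t) = H(\bone,t)$ by multiplying through by $1-t$: the geometric sum telescopes and yields $(1-t)P(t) = 1-2t$, so $P(t) = (1-2t)/(1-t)$ and $\rho = 1/2$. Differentiating gives $P'(\rho) = -4$ and $P''(\rho) = -16$, and together with $q_k(\rho) = 2^{-k}$ and $q_k'(\rho) = k\,2^{-(k-1)}$, substituting into the formulas for $\bm$ and $\mH_{i,j}$ in Theorem~\ref{thm:LCLTgen} immediately yields the expressions claimed in Theorem~\ref{thm:LCLTcomp}. These steps are purely mechanical.

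The substantive work is evaluating the determinant $\det \mH$ via~\eqref{eq:Hgendet} and matching it against the stated form of $A_n$. The only non-trivial ingredient is a closed form for $\sum_{k=1}^d k^2/2^k$, which I would obtain by evaluating the identity $\sum_{k \geq 0} k^2 x^k = x(1+x)/(1-x)^3$ at $x = 1/2$ and computing the tail $\sum_{k \geq d+1} k^2/2^k$ via reindexing, giving $\sum_{k=1}^d k^2/2^k = 6 - (d^2+4d+6)/2^d$.

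The hard part is then verifying that the bracketed expression in~\eqref{eq:Hgendet} collapses to a clean form. The key observation is an exact cancellation: with the values above, $\rho P''(\rho) + P'(\rho) + \rho\sum_k q_k'(\rho)^2/q_k(\rho)$ has its $d$-independent contributions $-8 - 4 + 12$ cancel, leaving only $-(d^2+4d+6)/2^{d-1}$. Multiplying by $q(\rho)-1 = (1-2^d)/2^d$ and adding $q'(\rho)^2 \rho = (d+2)^2/2^{2d-1}$, the elementary identity $(d+2)^2 - (d^2+4d+6) = -2$ reduces the bracket to $((d^2+4d+6)2^d - 2)/2^{2d-1}$. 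Tracking the powers of $2$ in the denominator of~\eqref{eq:Hgendet} then yields $\det \mH = ((d^2+4d+6)2^d - 2)/2^{d^2/2 + 7d/2 + 2}$, which produces the form of $A_n$ as stated, and the LCLT conclusion follows directly from Theorem~\ref{thm:LCLTgen}.
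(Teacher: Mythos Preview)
Your proposal is correct and follows exactly the approach the paper indicates: the paper simply states that the hypotheses of Theorem~\ref{thm:LCLTgen} are easy to verify with $\rho=1/2$, and you have filled in these verifications and the ensuing computations accurately. In particular, your identification of $q(t)=t^{d+1}/(1-t)$ and $q_k(t)=t^k$, the values $P'(\rho)=-4$, $P''(\rho)=-16$, the closed form $\sum_{k=1}^d k^2/2^k = 6-(d^2+4d+6)/2^d$, the cancellation $-8-4+12=0$, and the final power-of-two bookkeeping giving $\det\mH = ((d^2+4d+6)2^d-2)/2^{d^2/2+7d/2+2}$ are all correct.
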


\begin{figure}
\centering
\includegraphics[scale=0.42]{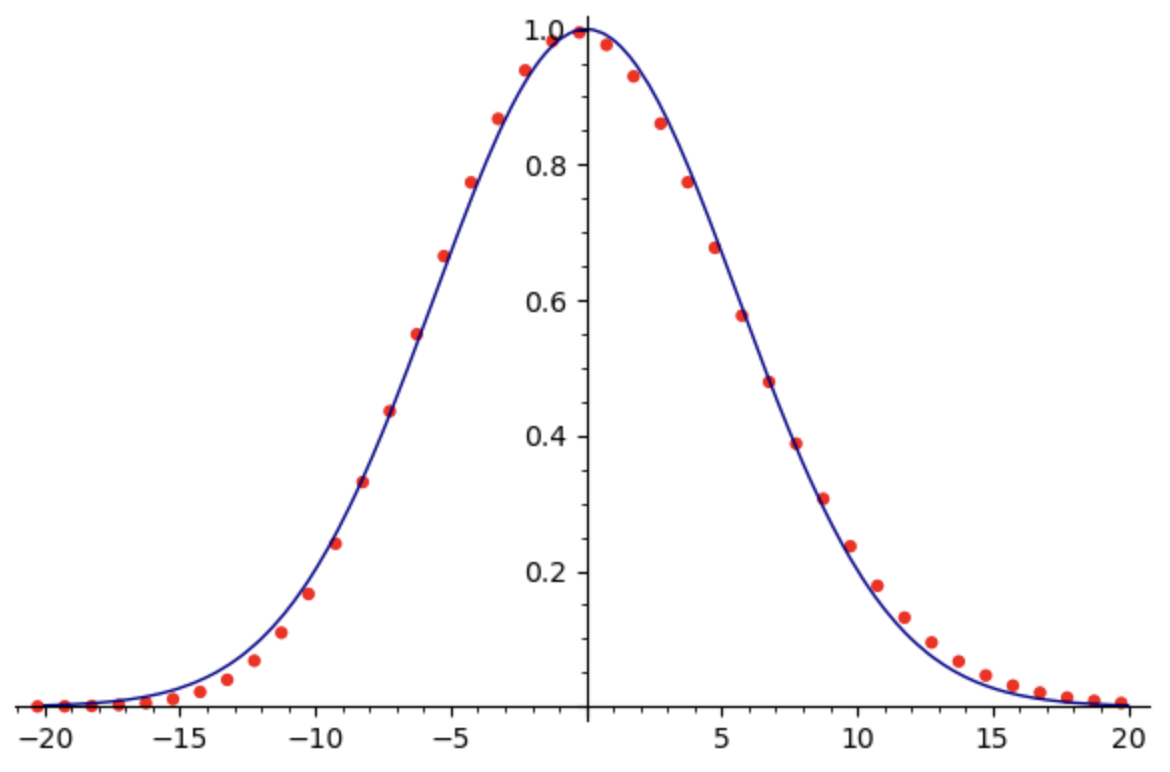}
\includegraphics[scale=0.42]{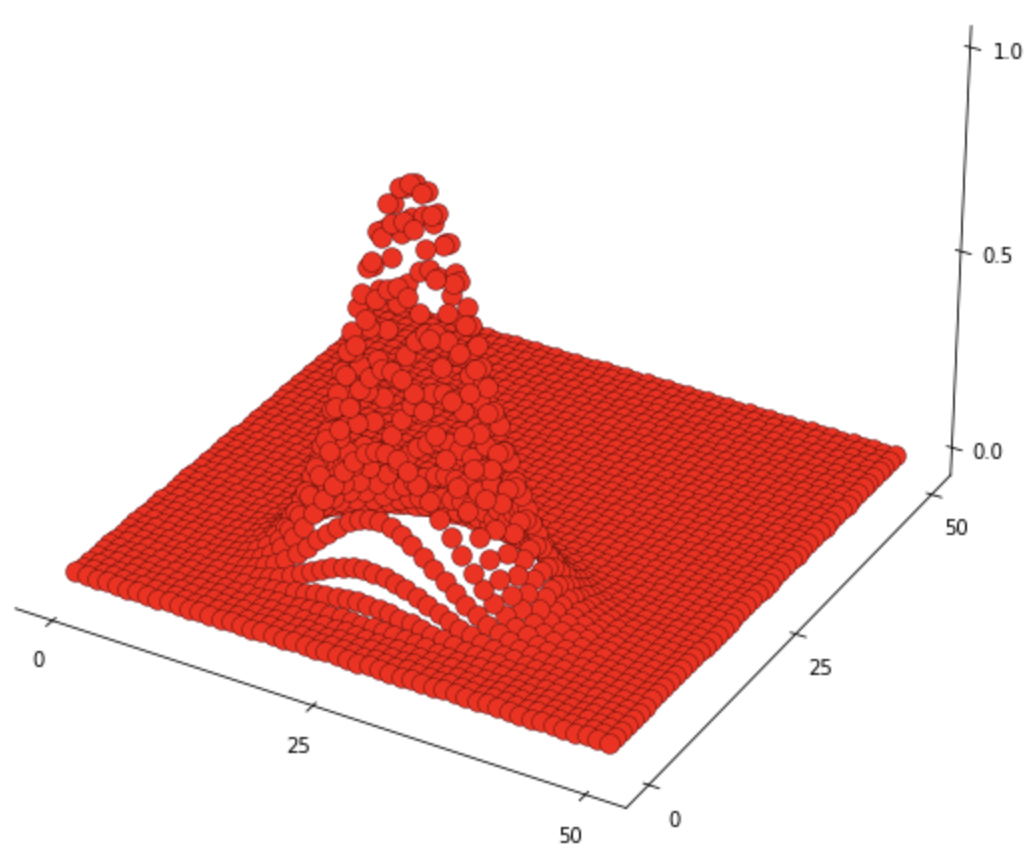}
\caption{\emph{Left:} The coefficients of $[t^{100}]\left(1-z_1t-t^2/(1-t)\right)^{-1}$ divided by the projected maximum coefficient compared to their limiting normal distribution and shifted to put the maximum at the origin. \emph{Right:} The coefficients of $[t^{100}]\left(1-z_1t-z_2t^2-t^3/(1-t)\right)^{-1}$ divided by the projected maximum coefficient and shifted to put the maximum at the origin.}
\label{fig:compplots}
\end{figure}

The flexibility of Theorem~\ref{thm:LCLTgen} allows us to modify the restrictions on the elements appearing or tracked among the compositions under consideration. For instance, for any finite set $\Omega \subset \mathbb{Z}_{>0}$ the multivariate generating function enumerating compositions by size and number of times each element of $\Omega=\{\omega_1,\dots,\omega_d\}$ occurs is
$$F(\bz,t) = \frac{1}{1-\sum_{k=1}^d (z_k-1)t^{\omega_k} - \frac{t}{1-t}},$$
where $z_k$ tracks the number of occurrences of $\omega_k$. The hypotheses of Theorem~\ref{thm:LCLTgen} still hold, so a local central limit theorem applies (although, of course, the parameters of the limiting distribution will depend on which summands are tracked).

Even more generally, we can restrict our compositions to summands in some set $\Lambda \subset \mathbb{Z}_{>0}$ and track occurrences of elements in a finite subset $\Omega=\{\omega_1,\dots,\omega_d\} \subset \Lambda$. The relevant multivariate generating function becomes
\[ F(\bz,t) = \frac{1}{1-\sum_{k=1}^d (z_k-1)t^{\omega_k} - \sum_{k \in \Omega}t^k}. \]
When the elements of $\Omega$ are coprime, Theorem~\ref{thm:LCLTgen} applies, so a local central limit theorem holds (when the elements of $\Omega$ have greatest common divisor larger than $1$ then some intervention is needed to determine which singularities determine asymptotic behaviour).

\begin{rem}
Restricting compositions to use only the numbers $1,\dots,d+1$ and tracking occurrences of those numbers gives the same generating function as the family of restricted permutations discussed above, whose behaviour is described in Theorem~\ref{thm:LCLT}.
\end{rem}

\subsubsection*{\texorpdfstring{$n$}{n}-Colour Compositions with Tracked Summands}
\label{sec:nColComps}

Finally, we consider the class of \emph{$n$-colour compositions}, introduced by Agarwal~\cite{Agarwal2000} and studied in Gibson et al.~\cite{GibsonGrayWang2018}, where each integer $i$ is coloured by one of $i$ available colours (each summand must be coloured and different colourings give distinct $n$-colour compositions). 

\begin{examplex}
The twenty-one $n$-colour compositions of 4 are
\begin{align*}
& 4_1, \text{ } 4_2, \text{ } 4_3, \text{ } 4_4,\\
& 3_11_1, \text{ } 3_21_1, \text{ } 3_31_1, \text{ } 1_13_1, \text{ } 1_13_2, \text{ } 1_13_3,\\
& 2_12_1, \text{ } 2_12_2, \text{ } 2_22_1, \text{ } 2_22_2,\\
& 2_11_11_1, \text{ } 2_21_11_1, \text{ } 1_12_11_1, \text{ } 1_12_21_1, \text{ } 1_11_12_1, \text{ } 1_11_12_2,\\
& 1_11_11_11_1,
\end{align*}
where the subscripts denote the colours assigned to each summand.
\end{examplex}

Once again, we fix a positive integer $d$ and track the number of times each element of $\{1,\dots,d\}$ occurs. If $\mS_n$ is the class of positive integers where each integer $i$ is coloured with one of $i$ possible colours then
\begin{align*}
  \mS_n(\bz, t) &= z_1t+2z_2t^2 + \cdots + dz_dt^d + (d+1)t^{d+1} + (d+2)t^{d+2} + \cdots \\
  &= z_1t+2z_2t^2 + \cdots + dz_dt^d + t\frac{d}{dt}\left(\frac{t^{d+1}}{1-t}\right)\\
  &= z_1t+2z_2t^2 + \cdots + dz_dt^d + \frac{dt^{d+1}}{1-t} + \frac{t^{d+1}}{(1-t)^2}
\end{align*}
is the multivariate generating function of positive integers where $z_k$ tracks the number of occurrences of $k$, so the corresponding multivariate generating function for $n$-colour compositions is
$$C_n(\bz, t) = \frac{1}{1-z_1t-2z_2t^2 - \cdots - dz_dt^d - \frac{dt^{d+1}}{1-t} - \frac{t^{d+1}}{(1-t)^2}}.$$
As expected, the hypotheses of Theorem~\ref{thm:LCLTgen} hold, and we get a (messier) LCLT whose parameters are given explicitly in the Sage notebooks corresponding to this paper. Similar to our last example, we may further restrict which elements (or colours!) are allowed or tracked, and immediately get LCLTs. The proofs for such extensions follow similarly to those for integer compositions.

\section{Proof of Theorem~\ref{thm:LCLTgen}}
\label{sec:GenForm}

We now prove Theorem~\ref{thm:LCLTgen} by applying the outline in Figure~\ref{fig:schema2} to verify the conditions of Proposition~\ref{prop:LCLT2}. To that end, assume the hypotheses of Theorem~\ref{thm:LCLTgen} hold.

\subsection*{Step 1: Finding the correct direction.}
Following the outline of Figure~\ref{fig:schema2}, we substitute $\bw = (\bone, \rho)$ into the smooth critical point equations~\eqref{eq:crit} to find that $\bw$ is a critical point in the direction $(\bm,1)$ where
$$\bm = \left(\frac{-q_1(\rho)}{\rho P'(\rho)},\dots,\frac{-q_d(\rho)}{\rho P'(\rho)}\right).$$

\subsection*{Steps 2 and 3: Establishing minimality.}
Suppose $H(s, \ldots, s, t) = 0$ for $0<s<1$ and $0<t<\rho$. Then $s\sum_{k=1}^dq_k(t) = 1-q(t)$, and we cannot have $\sum_{k=1}^dq_k(t)=0$ since this implies $q(0)=1$ which would violate the fact that $q$ has no constant, so 
$$s = \frac{1-q(t)}{\sum_{k=1}^dq_k(t)}.$$ 
Our assumptions imply that the polynomial $\sum_{k=1}^dq_k(t)$ and series expansion of $q(t)$ have non-negative coefficients, and $0<q(\rho)<1$, which implies that $|s|$ increases as $0<t<\rho$ decreases. Thus $H(s,\ldots,s,s\rho) \neq 0$ for all $0 < s < 1$, and $\bw$ is minimal by applying Lemma~\ref{lemma:minimal} to the function $1/H(\bz,t) = 1/(1 - S(\bz, t))$ whose series expansion at the origin has non-negative coefficients. Our assumptions imply that $S(\bz, t)$ is an aperiodic power series with non-negative coefficients, so Proposition~\ref{prop:strictmin} implies that no other singularities (including no other critical points) have the same coordinate-wise modulus as $\bw$. 

\subsection*{Step 4a: Computing an $LU$-factorization.}

We now prove that the phase Hessian matrix defined by~\eqref{eq:Hess}, which simplifies to~\eqref{eq:Hgen} in our case, has non-zero determinant. To compute this symbolic determinant, we originally used the Sage computer algebra system to compute and factor the Hessian determinant for the permutation generating function described by Proposition~\ref{prop:ratGF} in small dimensions. Observing a pattern in the factors, we were able to conjecture, and then \emph{a posteriori} prove, an $LU$-factorization for the Hessian matrix in this case, and then extend to the general case. This approach immediately gives the Hessian determinant: if $\mH U = L$ for a lower triangular matrix $L$ and an upper triangular matrix $U$ with diagonal elements equal to 1 then $\det \mH$ is simply the product of the diagonal elements of $L$. 

\begin{rem}
When the dimension $d$ is fixed, the matrix equation $\mH U = L$ defines an explicit system of equations in terms of the entries of $U$ and $L$, so it is often possible to computationally determine suitable $U$ and $L$ in low dimension, conjecture their general form, then prove this inference. This approach to symbolic determinants is described in the well-known treatise of Krattenthaler~\cite{Krattenthaler1999}, which attributes the popularization of such a `guess-and-check' $LU$-factorization to George Andrews after he used it to great effect in a variety of papers starting in the 1970s.
\end{rem}

A companion Sage notebook to this paper gives a procedure to calculate $U$ and $L$ for any fixed $d$. Studying the numerator and denominator of the rational function entries we are able to deduce certain patterns, such as the denominators being constant down columns, which leads us to conjecture that in general dimension $\mH U = L$ where 
\\
\begin{minipage}{0.4\textwidth}
\[
  \text{U}_{ij} = \begin{cases}
   \frac{q_j(\rho)g_{ij}}{r_{j}} & i < j\\[+1mm]
   1 & i = j \\[+1mm]
   0 & i > j
  \end{cases}
\]
\end{minipage}
\begin{minipage}{0.1\textwidth}
and
\end{minipage}
\begin{minipage}{0.4\textwidth}
\[  L_{ij} = \begin{cases}
    0 & i < j\\[+1mm]
    \frac{-q_j(\rho)r_{j+1}}{P'(\rho)\rho r_{j}} & i = j\\[+1mm]
    \frac{q_j(\rho)q_i(\rho)s_{ij}}{P'(\rho)\rho r_j} & i > j
    \end{cases}
\]
\end{minipage}
\begin{minipage}{0.05\textwidth}
\vspace{10mm}
\end{minipage}

\noindent for
\begin{align*}
r_{j} &= P'(\rho)^2\rho-P''(\rho)\rho A_j + 2P'(\rho)\rho B_j - P'(\rho) A_j - \rho \left(A_jD_j - B_j^2 \right) 
\\[+2mm]
g_{ij} &= P'(\rho) + P''(\rho)\rho
-\rho\left(\frac{q'_i(\rho)}{q_i(\rho)}+\frac{q'_j(\rho)}{q_j(\rho)}\right)\left(P'(\rho) + B_j-q'_i(\rho)\right)
+\rho\left(D_j-\frac{q'_i(\rho)^2}{q_i(\rho)}\right)
+\frac{\rho q'_i(\rho)q'_j(\rho)}{q_i(\rho)q_j(\rho)}\left(A_j-q_i(\rho)\right)
\\[+2mm]
s_{ij} &= P''(\rho)\rho 
+ \rho D_j
-\rho\left(\frac{q'_i(\rho)}{q_i(\rho)}+\frac{q'_j(\rho)}{q_j(\rho)}\right)\left(P'(\rho) + B_j\right)
+P'(\rho)
+ \left(\frac{\rho q'_i(\rho)q'_j(\rho)}{q_i(\rho)q_j(\rho)}\right)A_j
\end{align*}
with
$$A_j = \sum_{k=1}^{j-1}q_k(\rho), \qquad 
B_j = \sum_{k=1}^{j-1}q'_k(\rho), \qquad
D_j = \sum_{k=1}^{j-1}\frac{q_k'(\rho)^2}{q_k(\rho)}.
$$ 
Although these formulas are quite involved, we note that by keeping $P'(\rho)$ and $P''(\rho)$ as symbolic parameters the entries of $U$ and $L$ are independent of $d$, which allows us to algorithmically verify that $\mH U = L$ with the aid of a computer algebra system. We break this verification into three cases depending on the behaviour of the entries of $U$ and $L$: see Figure~\ref{fig:GenCases} for an illustration of the different cases on a $4 \times 4$ matrix. 

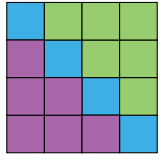
\begin{figure}[t]
\centering
\begin{tikzpicture}
    draw[step=0.5cm, gray, very thin] (0,0) grid (2,2);
    \filldraw[CornflowerBlue, draw = black] (0,1.5) rectangle (0.5, 2);
    \filldraw[Plum!70!white, draw = black] (0,0) rectangle (0.5, 0.5);
    \filldraw[Plum!70!white, draw = black] (0,0.5) rectangle (0.5, 1);
    \filldraw[Plum!70!white, draw = black] (0,1) rectangle (0.5, 1.5);
    \filldraw[CornflowerBlue, draw = black] (0.5,1) rectangle (1, 1.5);
    \filldraw[CornflowerBlue, draw = black] (1,0.5) rectangle (1.5, 1);
    \filldraw[CornflowerBlue, draw = black] (1.5,0) rectangle (2, 0.5);
    \filldraw[YellowGreen, draw = black] (0.5,1.5) rectangle (1, 2);
    \filldraw[YellowGreen, draw = black] (1,1) rectangle (1.5, 1.5);
    \filldraw[YellowGreen, draw = black] (1,1.5) rectangle (1.5, 2);
    \filldraw[YellowGreen, draw = black] (1.5,0.5) rectangle (2, 1);
    \filldraw[YellowGreen, draw = black] (1.5,1) rectangle (2, 1.5);
    \filldraw[YellowGreen, draw = black] (1.5,1.5) rectangle (2, 2);
    \filldraw[Plum!70!white, draw = black] (0.5,0) rectangle (1, 0.5);
    \filldraw[Plum!70!white, draw = black] (0.5,0.5) rectangle (1, 1);
    \filldraw[Plum!70!white, draw = black] (1,0) rectangle (1.5, 0.5);
\end{tikzpicture}
\caption{The three cases, distinguished by colour, for $d=4$.}
\label{fig:GenCases}
\end{figure}

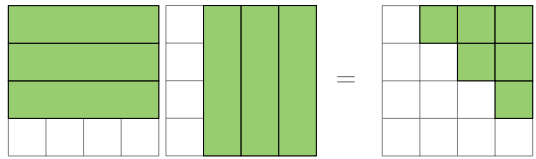
\begin{figure}[h!]
\centering
\begin{minipage}{0.12\textwidth}
  \begin{tikzpicture}
    \draw[step=0.5cm, gray, very thin] (0,0) grid (2,2);
    \filldraw[YellowGreen, draw = black] (0,0.5) rectangle (2, 1);
    \filldraw[YellowGreen, draw = black] (0,1) rectangle (2, 1.5);
    \filldraw[YellowGreen, draw = black] (0,1.5) rectangle (2, 2);
  \end{tikzpicture}
\end{minipage}
\begin{minipage}{0.13\textwidth}
  \begin{tikzpicture}
    \draw[step=0.5cm, gray, very thin] (0,0) grid (2,2);
    \filldraw[YellowGreen, draw = black] (0.5,0) rectangle (1, 2);
    \filldraw[YellowGreen, draw = black] (1,0) rectangle (1.5, 2);
    \filldraw[YellowGreen, draw = black] (1.5,0) rectangle (2, 2);
  \end{tikzpicture}
\end{minipage}
\begin{minipage}{0.03\textwidth}
=
\end{minipage}
\begin{minipage}{0.1\textwidth}
  \begin{tikzpicture}
    \draw[step=0.5cm, gray, very thin] (0,0) grid (2,2);
    \filldraw[YellowGreen, draw = black] (0.5,1.5) rectangle (1, 2);
    \filldraw[YellowGreen, draw = black] (1,1) rectangle (1.5, 1.5);
    \filldraw[YellowGreen, draw = black] (1,1.5) rectangle (1.5, 2);
    \filldraw[YellowGreen, draw = black] (1.5,0.5) rectangle (2, 1);
    \filldraw[YellowGreen, draw = black] (1.5,1) rectangle (2, 1.5);
    \filldraw[YellowGreen, draw = black] (1.5,1.5) rectangle (2, 2);
  \end{tikzpicture}
\end{minipage}
\caption*{Case 1: $j > i$}
\end{figure}

\paragraph{Case 1 ($j > i$):}
As $U$ is upper-triangular, we have
\[ (\mH U)_{ij} = \sum_{a=1}^j \mH_{ia}U_{aj} = \sum_{\substack{1 \leq a < j \\ a \neq i}} \mH_{ia} U_{aj} + \mH_{ii}U_{ij} + \mH_{ij} \]
where we split the sum in such a way that the summands in the indefinite series have a uniform definition. Using Sage for algebraic manipulations, we see that
\[
  \mH_{ia}U_{aj} = \frac{\alpha_{ij}\left(\frac{q'_a(\rho)^2}{q_a(\rho)}\right) + \beta_{ij} q_a(\rho) + \gamma_{ij} q'_a(\rho)}{\rho^2P'(\rho)^3r_j}
\]
where 
\begin{align}
\alpha_{ij} &= \left(P'(\rho)q_j(\rho) - q'_j(\rho)A_j + q_j(\rho)B_j\right)P'(\rho)q_i(\rho)\rho^2 \label{eq:alpha} \\[+2mm] 
\beta_{ij} &= {\Big[}
P'(\rho)\left(q'_j(\rho)\rho-q_j(\rho)\right) - P''(\rho)q_j(\rho)\rho + q'_j(\rho)\rho B_j - q_j(\rho)\rho D_j {\Big]} \label{eq:beta}\\
& \hspace{0.2in} \cdot {\Big[}(P'(\rho)\left(q'_i(\rho)\rho - q_i(\rho)\right) - P''(\rho)q_i(\rho)\rho {\Big]} \notag
\end{align}
and
\begin{align}
\begin{split}
\gamma_{ij} =& 
{\Big[}q'_j(\rho)q_i(\rho) + q'_i(\rho)q_j(\rho) {\Big]} \rho^2 P'(\rho)^2
- 2 {\Big[} P'(\rho) + P''(\rho)\rho {\Big]}\rho P'(\rho) q_i(\rho)q_j(\rho) \\[+2mm]
& + {\Big[} P''(\rho)q'_j(\rho)q_i(\rho)\rho - P'(\rho)q'_i(\rho)q'_j(\rho)\rho + P'(\rho)q'_j(\rho)q_i(\rho) {\Big]}\rho A_j  \\[+2mm]
& + {\Big[} q'_j(\rho)q_i(\rho)\rho + q'_i(\rho)q_j(\rho)\rho - q_i(\rho)q_j(\rho) {\Big]} \rho P'(\rho) B_j - P''(\rho)q_i(\rho)q_j(\rho)\rho^2 B_j  \\[+2mm]
& - P'(\rho)q_i(\rho)q_j(\rho)\rho^2D_j.
\end{split} \label{eq:gamma}
\end{align}

Since $\alpha_{ij}$, $\beta_{ij}$, $\gamma_{ij},$ and the denominator in our above expression for $\mH_{ia}U_{aj}$ are not dependent on $a$,

\begin{align*}
  \sum_{\substack{1 \leq a < j \\ a \neq i}} \mH_{ia} U_{aj} &= \sum_{\substack{1 \leq a < j \\ a \neq i}}\frac{\alpha_{ij}\left(\frac{q'_a(\rho)^2}{q_a(\rho)}\right) + \beta_{ij} q_a(\rho) + \gamma_{ij} q'_a(\rho)}{\rho^2P'(\rho)^3r_j} \\
  &= \frac{1}{\rho^2P'(\rho)^3r_j} \left[\alpha_{ij} \sum_{\substack{1 \leq a < j \\ a \neq i}}\frac{q'_a(\rho)^2}{q_a(\rho)} + \beta_{ij} \sum_{\substack{1 \leq a < j \\ a \neq i}}q_a(\rho) + \gamma_{ij} \sum_{\substack{1 \leq a < j \\ a \neq i}}q'_a(\rho)\right] \\[+2mm]
  &= \frac{1}{\rho^2P'(\rho)^3r_j} \left[\alpha_{ij} \left(D_j - \frac{q'_i(\rho)^2}{q_i(\rho)}\right)+ \beta_{ij} \left(A_j - q_i(\rho)\right) + \gamma_{ij} \left(B_j - q'_i(\rho)\right)\right].
\end{align*}
Substitution of our above formulas then gives, after some algebraic simplification by Sage, that 
$$
\sum_{\substack{1 \leq a < j \\ a \neq i}} \mH_{ia} U_{aj} + \mH_{ii}U_{ij} + \mH_{ij}=0
$$ 
in this case, as required.

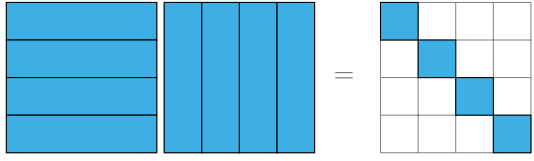
\begin{figure}[h!]
\centering
\begin{minipage}{0.12\textwidth}
  \begin{tikzpicture}
    \draw[step=0.5cm, gray, very thin] (0,0) grid (2,2);
    \filldraw[CornflowerBlue, draw = black] (0,1.5) rectangle (2, 2);
    \filldraw[CornflowerBlue, draw = black] (0,1) rectangle (2, 1.5);
    \filldraw[CornflowerBlue, draw = black] (0,0.5) rectangle (2, 1);
    \filldraw[CornflowerBlue, draw = black] (0,0) rectangle (2, 0.5);
  \end{tikzpicture}
\end{minipage}
\begin{minipage}{0.13\textwidth}
  \begin{tikzpicture}
    \draw[step=0.5cm, gray, very thin] (0,0) grid (2,2);
    \filldraw[CornflowerBlue, draw = black] (0,0) rectangle (0.5, 2);
    \filldraw[CornflowerBlue, draw = black] (0.5,0) rectangle (1, 2);
    \filldraw[CornflowerBlue, draw = black] (1,0) rectangle (1.5, 2);
    \filldraw[CornflowerBlue, draw = black] (1.5,0) rectangle (2, 2);
  \end{tikzpicture}
\end{minipage}
\begin{minipage}{0.03\textwidth}
=
\end{minipage}
\begin{minipage}{0.1\textwidth}
  \begin{tikzpicture}
    \draw[step=0.5cm, gray, very thin] (0,0) grid (2,2);
    \filldraw[CornflowerBlue, draw = black] (0,1.5) rectangle (0.5, 2);
    \filldraw[CornflowerBlue, draw = black] (0.5,1) rectangle (1, 1.5);
    \filldraw[CornflowerBlue, draw = black] (1,0.5) rectangle (1.5, 1);
    \filldraw[CornflowerBlue, draw = black] (1.5,0) rectangle (2, 0.5);
  \end{tikzpicture}
\end{minipage}
\caption*{Case 2: $i = j$}
\end{figure}

\paragraph{Case 2 ($i = j$):}
Since $U$ is upper-triangular we again have
\[ (\mH U)_{jj} = \sum_{a=1}^d \mH_{ja}U_{aj} = \sum_{a=1}^j \mH_{ja}U_{aj} = \sum_{a=1}^{j-1} \mH_{ja}U_{aj} + \mH_{jj},\] 
where we separate the case $a=j$ since the entries of $U$ have a different definition on the diagonal and above the diagonal. It is therefore sufficient to prove 
\[ \sum_{a=1}^{j-1} \mH_{ja}U_{aj} + \mH_{jj} = L_{jj} \]
using the definitions above. Analogously to Case 1,
\[
  \mH_{ja}U_{aj} = \frac{\alpha_{jj}\left(\frac{q'_a(\rho)^2}{q_a(\rho)}\right) + \beta_{jj} q_a(\rho) + \gamma_{jj} q'_a(\rho)}{\rho^2P'(\rho)^3r_j}
\]
where $\alpha_{jj}$, $\beta_{jj}$ and $\gamma_{jj}$ are defined in~\eqref{eq:alpha},~\eqref{eq:beta} and~\eqref{eq:gamma} respectively. Since $\alpha_{jj}$, $\beta_{jj}$, $\gamma_{jj},$ and the denominator in this expression for $\mH_{ja}U_{aj}$ are not dependent on $a$, we can algebraically simplify to get
\begin{align*}
  \sum_{a=1}^{j-1} \mH_{ja}U_{aj} &= \sum_{a=1}^{j-1}\left[\frac{\alpha_{jj}\left(\frac{q'_a(\rho)^2}{q_a(\rho)}\right) + \beta_{jj} q_a(\rho) + \gamma_{jj} q'_a(\rho)}{\rho^2P'(\rho)^3r_j}\right] \\[+2mm]
  &= \frac{\alpha_{jj} D_j + \beta_{jj} A_j + \gamma_{jj} B_j}{\rho^2P'(\rho)^3r_j}.
\end{align*}
Substitution of our above formulas then gives, after some algebraic simplification by Sage, that 
$$
\sum_{a=1}^{j-1} \mH_{ja}U_{aj} + \mH_{jj} - L_{jj} = 0
$$ 
in this case, as desired.

\paragraph{Case 3 ($j < i$):}
At this point, we have proven that $\mH U$ is lower-triangular and described its diagonal entries, which gives the desired determinant. Although not needed to establish Theorem~\ref{thm:LCLTgen}, for completeness we prove the claimed expression above for the below diagonal entries in our companion Sage notebook.

\subsection*{Step 3b: Computing the Hessian determinant}

Our $LU$-factorization expresses the Hessian determinant as the product of the diagonal entries of the lower-triangular matrix $L$. In fact, the form of these diagonal entries causes cancellation of many terms, leading to a relatively compact final answer,
\begin{align*}
\det\mH = 
 \prod^{d}_{m = 1} L_{mm} = \prod^{d}_{m=1} \frac{-q_m(\rho)r_{m+1}}{P'(\rho)\rho r_m}
 &= \frac{(-1)^d\left(\prod_{k=1}^{d}q_k(\rho)\right)r_{d+1}}{P'(\rho)^{d}\rho^{d}r_1}\\[+2mm]
&= \frac{(-1)^d\left(\prod_{k=1}^{d}q_k(\rho)\right)r_{d+1}}{P'(\rho)^{d+2}\rho^{d+1}}.
\end{align*}
Furthermore, 
\[
P(\rho) = 1 - q(\rho) - \sum_{k=1}^dq_k(\rho) = 0 \quad\text{and}\quad
P'(\rho) = - q'(\rho) - \sum_{k=1}^dq'_k(\rho)
\]
so that $A_{d+1} = \sum_{k=1}^dq_k(\rho) = 1 - q(\rho)$ and $B_{d+1} = \sum_{k=1}^dq'_k(\rho) = -q'(\rho) - P'(\rho)$. Making these substitutions into our definition of $r_{d+1}$ gives, after some algebraic simplification, that the Hessian determinant has the form~\eqref{eq:Hgendet}. 

\subsection*{Step 3c: Non-singularity of the determinant}
It remains to show that the expression in~\eqref{eq:Hgendet} is non-zero under our assumptions. Each $q_k(\rho) \neq 0$ because $\rho>0$ and the $q_k$ have non-negative coefficients, so it is sufficient to prove that
$$(q(\rho)-1)\left(\rho P''(\rho) + P'(\rho)+\rho\sum_{k=1}^{d}\frac{q'_k(\rho)^2}{q_k(\rho)}\right) + q'(\rho)^2\rho > 0.$$
First, we note that $q'(\rho)^2\rho > 0$ (as $q(t)$ is non-constant with non-negative coefficients and $\rho > 0$) and $q(\rho) - 1 \leq 0$, so it is enough to prove that
$$\rho P''(\rho) + P'(\rho) + \rho\sum_{k=1}^d\frac{q'_k(\rho)^2}{q_k(\rho)} \leq 0.$$ 
Because 
$$ \rho P''(\rho) + P'(\rho) + \rho\sum_{k=1}^d\frac{q'_k(\rho)^2}{q_k(\rho)} = \rho {\Big[}-q''(\rho) - \sum_{k=1}^dq_k''(\rho){\Big]} -q_k'(\rho) - \sum_{k=1}^dq_k'(\rho) + \rho\sum_{k=1}^d\frac{q'_k(\rho)^2}{q_k(\rho)}$$
and $q'(\rho)$ and $\rho q''(\rho)$ are non-negative, this holds if
$$ \rho\sum_{k=1}^d\frac{q'_k(\rho)^2}{q_k(\rho)} \leq \rho \sum_{k=1}^dq_k''(\rho) + \sum_{k=1}^dq_k'(\rho). $$
Now, if $f(z) = a_1z + a_2z^2 + \cdots + a_sz^s$ is any non-zero polynomial vanishing at the origin with non-negative coefficients then
\begin{align*}
zf'(z)^2 &=z\left(a_1 + 2a_2 z + \cdots + s a_{s} z^{s-1}\right)^2 \\[+2mm]
&\leq z\left(a_1 + a_2 z + \cdots + a_{s} z^{s-1}\right)\left(a_1 + 2^2 a_2 z + \cdots + s^2 a_{s} z^{s-1}\right) \\[+2mm]
& = f(z) \left(zf'(z)\right)' \\[+2mm]
& = f(z) \left(zf''(z) + f'(z)\right)
\end{align*}
for any $z>0$, where the first inequality holds because each term in the expansion is non-negative and $2ij \leq i^2 + j^2$ for all $i,j \in \mathbb{N}$. Thus,
$$
\frac{\rho q_k'(\rho)^2}{q_k(\rho)} \leq \rho q_k''(\rho) + q_k'(\rho)
$$
for all $k$, and summing over $k$ gives the desired inequality. The Hessian determinant is therefore non-zero.

\begin{rem}
When each of the $q_k$ are monomials this inequality becomes an equality.
\end{rem}

\subsection*{Step 4: Checking final hypotheses}

To conclude Theorem~\ref{thm:LCLTgen} from Proposition~\ref{prop:LCLT} it remains only to note that $G(\bone, \rho) \neq 0$, which is one of our assumptions, and that $H_t(\bw) = P'(\rho) \neq 0$ because none of the $q_k(t)$ and $q(t)$ have constant terms and we assume the denominator is not constant.

\section{Conclusion}
\label{sec:Conclusion}

The methods of analytic combinatorics in several variables, while perhaps daunting to some outside users due to its reliance on a wide breadth of mathematical techniques, provide some of the most powerful tools for the study of multivariate generating functions. Although Bender and Richmond~\cite{BenderRichmond1983} already provided techniques to prove local central limit theorems for a variety of combinatorial generating functions, verifying required conditions on analytic regions for the generating functions is too expensive to implement in a general, practical, algorithm. Using results of ACSV, a better understanding of the singular sets of multivariate generating functions yields such an algorithm, in addition to providing a framework for further generalizations. It is our hope that by putting the results of ACSV into context with past probabilistic work, giving a simple outline of how to apply the results, implementing the results in a computer algebra package, and using the results to prove a family of limit theorems, that readers are inspired to look further into this growing area of combinatorics.

\section*{Acknowledgements}
The authors thank Persi Diaconis for telling them about the interesting permutations with restricted cycles limit theorem, an anonymous referee for their insightful comments that helped to improve the structure and content of the paper, and Andrew Luo for associated work in Sage. SM was funded by NSERC Discovery Grant RGPIN-2021-02382 and TR was funded by an NSERC USRA and an NSERC CGS M.

\bibliographystyle{plain}
\bibliography{ref}

\begin{thebibliography}{10}

\bibitem{Agarwal2000}
A.~K. Agarwal.
\newblock {$n$}-colour compositions.
\newblock {\em Indian J. Pure Appl. Math.}, 31(11):1421--1427, 2000.

\bibitem{BaryshnikovMelczerPemantle2022}
Yuliy Baryshnikov, Stephen Melczer, and Robin Pemantle.
\newblock Stationary points at infinity for analytic combinatorics.
\newblock {\em Found. Comput. Math.}, 22(5):1631--1664, 2022.

\bibitem{BaryshnikovPemantle2011}
Yuliy Baryshnikov and Robin Pemantle.
\newblock Asymptotics of multivariate sequences, part {III}: {Q}uadratic
  points.
\newblock {\em Adv. Math.}, 228(6):3127--3206, 2011.

\bibitem{Bender1973}
Edward~A. Bender.
\newblock Central and local limit theorems applied to asymptotic enumeration.
\newblock {\em J. Combinatorial Theory Ser. A}, 15:91--111, 1973.

\bibitem{BenderRichmond1983}
Edward~A. Bender and L.~Bruce Richmond.
\newblock Central and local limit theorems applied to asymptotic enumeration.
  {II}. {M}ultivariate generating functions.
\newblock {\em J. Combin. Theory Ser. A}, 34(3):255--265, 1983.

\bibitem{Berry1941}
Andrew~C. Berry.
\newblock The accuracy of the gaussian approximation to the sum of independent
  variates.
\newblock {\em Transactions of the American Mathematical Society},
  49(1):122--136, 1941.

\bibitem{Billingsley1995}
Patrick Billingsley.
\newblock {\em Probability and measure}.
\newblock Wiley Series in Probability and Mathematical Statistics. John Wiley
  \& Sons, Inc., New York, third edition, 1995.

\bibitem{Cauchy1853}
Augustin~Louis Cauchy.
\newblock M{\'e}moire sur les r{\'e}sultats moyens d'un tr{\`e}s-grand nombre
  des observations.
\newblock {\em Comptes rendus hebdomadaires des s{\'e}ances de l'Acad{\'e}mie
  des Sciences}, 37, 1853.

\bibitem{ChungDiaconisGraham2021}
Fan Chung, Persi Diaconis, and Ron Graham.
\newblock Permanental generating functions and sequential importance sampling.
\newblock {\em Adv. in Appl. Math.}, 126:101916, 50, 2021.

\bibitem{CramerWold1936}
H.~Cram{\'e}r and H.~Wold.
\newblock Some theorems on distribution functions.
\newblock {\em Journal of the London Mathematical Society}, s1-11(4):290--294,
  1936.

\bibitem{Moivre1738}
A.~de~Moivre.
\newblock {\em The Doctrine of Chances or a Method of Calculating the
  Probabilities of Events in Play, 2nd edition}.
\newblock London, 1738.

\bibitem{DenefLipshitz1987}
J.~Denef and L.~Lipshitz.
\newblock Algebraic power series and diagonals.
\newblock {\em J. Number Theory}, 26(1):46--67, 1987.

\bibitem{Drmota1994a}
Michael Drmota.
\newblock Asymptotic distributions and a multivariate {D}arboux method in
  enumeration problems.
\newblock {\em J. Combin. Theory Ser. A}, 67(2):169--184, 1994.

\bibitem{Drmota1994}
Michael Drmota.
\newblock A bivariate asymptotic expansion of coefficients of powers of
  generating functions.
\newblock {\em European J. Combin.}, 15(2):139--152, 1994.

\bibitem{Drmota2009}
Michael Drmota.
\newblock {\em Random trees}.
\newblock SpringerWienNewYork, Vienna, 2009.

\bibitem{Durrett2010}
Rick Durrett.
\newblock {\em Probability: theory and examples}, volume~31 of {\em Cambridge
  Series in Statistical and Probabilistic Mathematics}.
\newblock Cambridge University Press, Cambridge, fourth edition, 2010.

\bibitem{Esseen1942}
Carl-Gustav Esseen.
\newblock On the liapunoff limit of error in the theory of probability.
\newblock {\em Arkiv f{\"o}r Matematik, Astronomi och Fysik}, A28:1--19, 1942.

\bibitem{Feller1935}
Willy Feller.
\newblock {\"U}ber den zentralen grenzwertsatz der wahrscheinlichkeitsrechnung.
\newblock {\em Mathematische Zeitschrift}, 40:521--559, 1935.

\bibitem{Fischer2011}
Hans Fischer.
\newblock {\em A history of the central limit theorem}.
\newblock Sources and Studies in the History of Mathematics and Physical
  Sciences. Springer, New York, 2011.

\bibitem{FlajoletOdlyzko1990}
Philippe Flajolet and Andrew Odlyzko.
\newblock Singularity analysis of generating functions.
\newblock {\em SIAM J. Discrete Math.}, 3(2):216--240, 1990.

\bibitem{FlajoletSedgewick2009}
Philippe Flajolet and Robert Sedgewick.
\newblock {\em Analytic combinatorics}.
\newblock Cambridge University Press, Cambridge, 2009.

\bibitem{FlajoletSoria1990}
Philippe Flajolet and Mich{\`e}le Soria.
\newblock Gaussian limiting distributions for the number of components in
  combinatorial structures.
\newblock {\em J. Combin. Theory Ser. A}, 53(2):165--182, 1990.

\bibitem{FlajoletSoria1993}
Philippe Flajolet and Mich{\`e}le Soria.
\newblock General combinatorial schemas: {G}aussian limit distributions and
  exponential tails.
\newblock {\em Discrete Math.}, 114(1-3):159--180, 1993.

\bibitem{GaoRichmond1992}
Zhicheng Gao and L.~Bruce Richmond.
\newblock Central and local limit theorems applied to asymptotic enumeration.
  {IV}. {M}ultivariate generating functions.
\newblock {\em J. Comput. Appl. Math.}, 41(1-2):177--186, 1992.

\bibitem{Gauss1823}
Carl~Friedrich Gauss.
\newblock Theoria combinationis observationum erroribus minimis obnoxiae, pars
  prior.
\newblock {\em Commentationes societatis Regiae scientiarum Goettingensis
  recentiores}, 5, 1823.

\bibitem{GibsonGrayWang2018}
Meghann~Moriah Gibson, Daniel Gray, and Hua Wang.
\newblock Combinatorics of {$n$}-color cyclic compositions.
\newblock {\em Discrete Math.}, 341(11):3209--3226, 2018.

\bibitem{GouldenJackson1983}
I.~P. Goulden and D.~M. Jackson.
\newblock {\em Combinatorial enumeration}.
\newblock A Wiley-Interscience Publication. John Wiley \& Sons, Inc., New York,
  1983.

\bibitem{HeubergerKropf2018}
Clemens Heuberger and Sara Kropf.
\newblock Higher dimensional quasi-power theorem and {B}erry-{E}sseen
  inequality.
\newblock {\em Monatsh. Math.}, 187(2):293--314, 2018.

\bibitem{Hwang1996}
Hsien-Kuei Hwang.
\newblock Large deviations for combinatorial distributions. {I}. {C}entral
  limit theorems.
\newblock {\em Ann. Appl. Probab.}, 6(1):297--319, 1996.

\bibitem{Hwang1998}
Hsien-Kuei Hwang.
\newblock Large deviations of combinatorial distributions. {II}. {L}ocal limit
  theorems.
\newblock {\em Ann. Appl. Probab.}, 8(1):163--181, 1998.

\bibitem{Hwang1998a}
Hsien-Kuei Hwang.
\newblock On convergence rates in the central limit theorems for combinatorial
  structures.
\newblock {\em European J. Combin.}, 19(3):329--343, 1998.

\bibitem{Hwang2024}
Hsien-Kuei Hwang.
\newblock Introduction to {G}aussian limit laws in analytic combinatorics.
\newblock In {\em Collected Works of Philippe Flajolet, in progress}. 2024.

\bibitem{Krattenthaler1999}
C.~Krattenthaler.
\newblock Advanced determinant calculus.
\newblock {\em S{\'e}m. Lothar. Combin.}, 42:Art. B42q, 67, 1999.
\newblock The Andrews Festschrift (Maratea, 1998).

\bibitem{Laplace1774}
Pierre-Simon Laplace.
\newblock M{\'e}moire sur la probabilit{\'e} des causes par les
  {\'e}v{\'e}nements.
\newblock {\em M{\'e}moires de Math{\'e}matique et de Physique, Present{\'e}s
  {\`a} l'Acad{\'e}mie Royale des Sciences, par divers Savans \& lus dans ses
  Assembl{\'e}es}, 6, 1774.

\bibitem{Laplace1810}
Pierre-Simon Laplace.
\newblock M{\'e}moire sur les approximations des formules qui sont fonctions de
  tr{\`e}s grands nombres et sur leur application aux probabilit{\'e}s.
\newblock {\em M{\'e}moires de l'Acad{\'e}mie Royale des Sciences de Paris,
  ann{\'e}e 1809}, pages 353--415, 1810.

\bibitem{Levy1922}
Paul L{\'e}vy.
\newblock Sur la d{\'e}termination des lois de probabilit{\'e} par leurs
  fonctions charact{\'e}ristiques.
\newblock {\em Comptes rendus hebdomadaires de l'Acad{\'e}mie des Sciences de
  Paris}, 175:854--856, 1922.

\bibitem{Levy1925}
Paul L{\'e}vy.
\newblock {\em Calcul des probabilit{\'e}s}.
\newblock Paris: Gauthier--Villars, 1925.

\bibitem{Levy1935}
Paul L{\'e}vy.
\newblock Propri{\'e}t{\'e}s asymptotiques des sommes des variables
  al{\'e}atoires ind{\'e}pendantes ou encha{\^\i}n{\'e}es.
\newblock {\em Journal de math{\'e}matiques pures et appliqu{\'e}es},
  14:347--402, 1935.

\bibitem{Lyapunov1900}
Aleksandr~Mikhailovich Lyapunov.
\newblock Sur une proposition de la th{\'e}orie des probabilit{\'e}s.
\newblock {\em Bulletin de l'Acad{\'e}mie Imp{\'e}riale des Sciences de
  St.-P{\'e}tersbourg}, 5(13):359--386, 1900.

\bibitem{Lyapunov1901}
Aleksandr~Mikhailovich Lyapunov.
\newblock Nouvelle forme du th{\'e}or{\`e}me sur la limite de probabilit{\'e}.
\newblock {\em M{\'e}moires de l'Acad{\'e}mie Imp{\'e}riale des Sciences de
  St.-P{\'e}tersbourg VIIIe S{\'e}rie, Classe Physico-Math{\'e}matique},
  12:1--24, 1901.

\bibitem{Melczer2021}
Stephen Melczer.
\newblock {\em An Invitation to Analytic Combinatorics: From One to Several
  Variables}.
\newblock Texts and Monographs in Symbolic Computation. Springer International
  Publishing, 2021.

\bibitem{MelczerSalvy2021}
Stephen Melczer and Bruno Salvy.
\newblock Effective coefficient asymptotics of multivariate rational functions
  via semi-numerical algorithms for polynomial systems.
\newblock {\em J. Symbolic Comput.}, 103:234--279, 2021.

\bibitem{PemantleWilson2002}
Robin Pemantle and Mark~C. Wilson.
\newblock Asymptotics of multivariate sequences. {I}. {S}mooth points of the
  singular variety.
\newblock {\em J. Combin. Theory Ser. A}, 97(1):129--161, 2002.

\bibitem{PemantleWilsonMelczer2024}
Robin Pemantle, Mark~C. Wilson, and S.~Melczer.
\newblock {\em Analytic combinatorics in several variables, 2nd Edition}.
\newblock Cambridge Studies in Advanced Mathematics. Cambridge University
  Press, Cambridge, 2024.

\bibitem{Poisson1829}
Sim{\'e}on~Denis Poisson.
\newblock Suite du m{\'e}moire sur la probabilit{\'e} des r{\'e}sultats moyens
  des observations, ins{\'e}r{\'e} dans la connaissance des tems de l'ann{\'e}e
  1827.
\newblock {\em Connaissance des tems pour l'an 1832}, pages 3--22, 1829.

\bibitem{Polya1920}
Georg P\'{o}lya.
\newblock {\"U}ber den zentralen grenzwertsatz der wahrscheinlichkeitsrechnung
  und das momentenproblem.
\newblock {\em Mathematische Zeitschrift}, 8:171--181, 1920.

\bibitem{Stanley1999}
Richard~P. Stanley.
\newblock {\em Enumerative combinatorics. {V}ol. 2}, volume~62 of {\em
  Cambridge Studies in Advanced Mathematics}.
\newblock Cambridge University Press, Cambridge, 1999.

\bibitem{Wilf2006}
Herbert~S. Wilf.
\newblock {\em generatingfunctionology}.
\newblock A K Peters, Ltd., Wellesley, MA, third edition, 2006.

\end{thebibliography}

\end{document}